\tikzset{
  knot diagram/every strand/.append style={
    ultra thick,black
  },
  show curve controls/.style={
    postaction=decorate,
    decoration={show path construction,
      curveto code={
        \draw [blue, dashed]
        (\tikzinputsegmentfirst) -- (\tikzinputsegmentsupporta)
        node [at end, draw, solid, red, inner sep=2pt]{};
        \draw [blue, dashed]
        (\tikzinputsegmentsupportb) -- (\tikzinputsegmentlast)
        node [at start, draw, solid, red, inner sep=2pt]{}
        node [at end, fill, blue, ellipse, inner sep=2pt]{}
        ;
      }
    }
  },
  show curve endpoints/.style={
    postaction=decorate,
    decoration={show path construction,
      curveto code={
        \node [fill, blue, ellipse, inner sep=2pt] at (\tikzinputsegmentlast) {}
        ;
      }
    }
  }
}
\newcommand{\R}{\mathbb{R}} % real
\newcommand{\Z}{\mathbb{Z}} % integers
\newtheorem{thm}{Theorem}[section]
\newtheorem{thrm}[thm]{Theorem}
\newtheorem{defn}[thm]{Definition}
\newtheorem{exa}[thm]{Example}
\newtheorem{lemma}[thm]{Lemma}
\newtheorem{prop}[thm]{Proposition}
\newtheorem{cor}[thm]{Corollary}
\newtheorem{rem}[thm]{Remark}
\newtheorem{conj}[thm]{Conjecture}
\renewcommand{\le}{\leqslant}  
\renewcommand{\ge}{\geqslant}
\newcommand{\e}{\varepsilon}
\newcommand{\de}{\delta}
\newcommand{\KP}[1]{%
  \begin{tikzpicture}[baseline=-\dimexpr\fontdimen22\textfont2\relax]
  #1
  \end{tikzpicture}%
}
\newcommand{\KPA}{%unknot
  \KP{\filldraw[color=black, fill=none,ultra thick] circle (0.3);}%
}
\newcommand{\KPB}{%positive crossing
  \KP{
    \draw[color=black,ultra thick] (-0.3,0.3) -- (0.3,-0.3);
    \draw[color=black,ultra thick] (-0.3,-0.3) -- (-0.05,-0.05);
    \draw[color=black,ultra thick] (0.05,0.05) -- (0.3,0.3);
  }%
}
\newcommand{\KPC}{%A-smoothing
  \KP{%
    \draw[color=black,ultra thick] (-0.3,0.3) .. controls (0,0) .. (0.3,0.3);
    \draw[color=black,ultra thick] (-0.3,-0.3) .. controls (0,0) .. (0.3,-0.3);
  }%
}
\newcommand{\KPD}{%B-smoothing
  \KP{%
    \draw[color=black,ultra thick] (-0.3,-0.3) .. controls (0,0) .. (-0.3,0.3);
    \draw[color=black,ultra thick] (0.3,-0.3) .. controls (0,0) .. (0.3,0.3);
  }%
}
\newcommand{\KPF}{%trivial knotoid
  \KP{%
    \draw[color=black, ultra thick] (-0.3,-0.3) .. controls (0.05,0) .. (-0.3,0.3);
  }%
}
\newcommand{\PC}{%positive crossing
  \KP{
     \draw[color=black, ultra thick, ->] (-0.3,-0.3) -- (0.3,0.3);
     \draw[color=black,ultra thick, ->]  (-0.08,0.08) -- (-0.3,0.3);
    \draw[color=black,ultra thick] (0.08,-0.08) -- (0.3,-0.3);
  }%
}
\newcommand{\NC}{%negative crossing
  \KP{
     \draw[color=black, ultra thick, ->] (0.3,-0.3) -- (-0.3,0.3);
     \draw[color=black,ultra thick, ->]  (0.08,0.08) -- (0.3,0.3);
    \draw[color=black,ultra thick] (-0.08,-0.08) -- (-0.3,-0.3);
  }%
}
\newcommand{\SC}{%singular crossing
  \KP{
     \draw[color=black, ultra thick, ->] (0.3,-0.3) -- (-0.3,0.3);
     \draw[color=black,ultra thick, ->]  (-0.3,-0.3) -- (0.3,0.3);
     \filldraw [black] (0,0) circle (2pt);
  }%
}
\newcommand{\TU}{%A-smoothing
  \KP{
     \draw[color=black, ultra thick, ->] (-0.3,-0.3) .. controls (0,-0.2) and (0,0.2) .. (-0.3,0.3);
     \draw[color=black, ultra thick, ->] (0.3,-0.3) .. controls (0,-0.2) and (0,0.2) .. (0.3,0.3);
  }%
}
\newcommand{\unkt}{%unknot
  \KP{
     \draw[color=black, ultra thick, ->] (0,0.3) arc (90:450:0.3);
  }%
}
\newcommand{\unkd}{%trivial knotoid
  \KP{
     \draw[color=black, ultra thick, ->] (0,-0.3) arc (-90:90:0.3);
  }%
}
\newcommand{\finfty}{%figure infinity
  \KP{
     \draw[color=black, ultra thick,] (0,0.3) arc (90:450:0.3);
      \draw[color=black, ultra thick,] (0.6,0.3) arc (90:450:0.3);
      \filldraw [black] (0.3,0) circle (2pt) node[anchor=west] {}; 
  }%
}
\newcommand{\RRKF}{%rail representation knotoid final
  \KP{
  %head rail
      \draw[color=red, ultra thick,] (-6,-6) -- (-6,6) ;
    \draw[color=red, ultra thick,] (-6,0) -- (-6,0) node[anchor=east] {H};
      
      %leg rail
       \draw[color=red, ultra thick,] (0,-6) -- (0,0.2);
       \draw[color=red, ultra thick,] (0,0.6) -- (0,1.5);
        \draw[color=red, ultra thick,] (0,1.9) -- (0,3.1);
       \draw[color=red, ultra thick,] (0,3.5) -- (0,4.4);
       \draw[color=red, ultra thick,] (0,4.8) -- (0,6);
       \draw[color=red, ultra thick,] (0,0) -- (0,0) node[anchor=east] {L};

       %outer cylinder
        \draw[color=gray, thick,] (6,-6) -- (6,6);
         \draw[color=gray,  thick, dashed] (0,6) ellipse (6 and 1);
          \draw[color=gray,  thick, dashed] (0,-6) ellipse (6 and 1);
          
    %l-->s
    \draw[color=black, ultra thick] (0,6) .. controls (-1, 5.8) and (-0.9, 5.6)  .. (-0.2,5.4);
    \draw[color=black, ultra thick] (0.1,5.3) .. controls (2, 4.5) and (-4, 4.2)  .. (-0.2,4);
    \draw[color=black, ultra thick] (0.1,3.95) .. controls (2, 3.5)  .. (-0.2,3.2);
    
    %s-->s
     \draw[color=black, ultra thick] (-0.2,3.2) .. controls (-3, 3) and (-2.9, 2.8)  .. (-0.2,2.6);
    \draw[color=black, ultra thick] (0.1,2.5) .. controls (6, 2) and (-5, 1.5)  .. (-0.2,1.2); 
    \draw[color=black, ultra thick] (0.1,1.15) .. controls (2, 0.7)  .. (-0.4,0.3);
     \draw[color=black, ultra thick] (-0.4,0.3) .. controls (-5,-0.1) and (-4, 4.7) .. (-0.2,3.2);

   %s-->h
    \draw[color=black, ultra thick] (-0.2,3.2) -- (-0.05,3.05);
    \draw[color=black, ultra thick] (0.1,3) .. controls (0.8,3)  .. (1,2.5);
     \draw[color=black, ultra thick] (1.1,2.3) --(1.2,2);
     \draw[color=black, ultra thick] (1.25,1.8) .. controls (1.3,1.7)  .. (1,1.1);
      \draw[color=black, ultra thick] (0.9,0.8) --(0.8,0.6);
         \draw[color=black, ultra thick] (0.7,0.3) --(0.1,-0.6);
         \draw[color=black, ultra thick] (-0.2,-0.9) .. controls (-4, -5) and (-5, -5.5)  .. (-6,-6);

    %sxolia
    \filldraw [black] (-0.2,3.2) circle (2pt)  node[anchor=south east] {s};
     \filldraw [red] (0,6) circle (2pt) node[anchor= south west] {l};
     \filldraw [red] (-6,-6) circle (2pt) node[anchor=south east] {h}; 
  }%
}
\newcommand{\RRKFL}{%rail representation lemma
  \KP{
  %head rail
      \draw[color=red, ultra thick,] (-6,-1) -- (-6,6) ;
    \draw[color=red, ultra thick,] (-6,2) -- (-6,2) node[anchor=east] {H};
      
      %leg rail
       \draw[color=red, ultra thick,] (0,-1) -- (0,0.2);
       \draw[color=red, ultra thick,] (0,0.6) -- (0,1.5);
        \draw[color=red, ultra thick,] (0,1.9) -- (0,3.1);
       \draw[color=red, ultra thick,] (0,3.5) -- (0,4.4);
       \draw[color=red, ultra thick,] (0,4.8) -- (0,6);
       \draw[color=red, ultra thick,] (0,2) -- (0,2) node[anchor=east] {L};

       %outer cylinder
        \draw[color=gray, thick,] (6,-1) -- (6,6);
         \draw[color=gray,  thick, dashed] (0,6) ellipse (6 and 1);
          \draw[color=gray,  thick, dashed] (0,-1) ellipse (6 and 1);
          
    %l-->s
    \draw[color=black, ultra thick] (0,6) .. controls (-1, 5.8) and (-0.9, 5.6)  .. (-0.2,5.4);
    \draw[color=black, ultra thick] (0.1,5.3) .. controls (2, 4.5) and (-4, 4.2)  .. (-0.2,4);
    \draw[color=black, ultra thick] (0.1,3.95) .. controls (2, 3.5)  .. (-0.2,3.2);
    
    %s-->s
     \draw[color=black, ultra thick] (-0.2,3.2) .. controls (-3, 3) and (-2.9, 2.8)  .. (-0.2,2.6);
    \draw[color=black, ultra thick] (0.1,2.5) .. controls (6, 2) and (-5, 1.5)  .. (-0.2,1.2); 
    \draw[color=black, ultra thick] (0.1,1.15) .. controls (2, 0.7)  .. (-0.4,0.3);
     \draw[color=black, ultra thick] (-0.4,0.3) .. controls (-4, -0.4)  .. (-6,-1);

    %sxolia
     \filldraw [red] (0,6) circle (2pt) node[anchor= south west] {l};
     \filldraw [red] (-6,-1) circle (2pt) node[anchor=south east] {h};
      \filldraw [black] (2.5,2) circle (0.00000001pt) node[anchor=south east] {c};
  }%
}
\newcommand{\trsf}{%spherical to planar
  \KP{
  %spherical generic knotoid
      \draw (0,0) circle (3cm);
      \draw[dashed] (0,0) ellipse (3cm and 1cm);
      
      \draw[color=blue, thick] (-0.2,3.3) .. controls (-0.1,3.5) and (0.1,3.1) .. (0.2,3.3);
      \draw[color=blue, thick] (-0.2,3.3) .. controls (-0.1,3.1) and (0.1,3.5) .. (0.2,3.3);

      \draw[color=black, ultra thick,] (-2.5,0) -- (-1.5,0);
     \draw[color=black, ultra thick,] (-1,0) -- (2,0); 
     \draw[color=black, ultra thick] (2,0) .. controls (3,0) and (3.25,0.07)  .. (2.5,0.5);     
     \draw[color=black, ultra thick] (2.5,0.5) .. controls (0.5,2) and (-1,2) .. (-1.25,0);
     \draw[color=black, ultra thick] (-1.25,0) .. controls (-1.25,-1) and (-0.5,-2)  .. (0.65,-0.2);
     \draw[color=black, ultra thick,] (0.85,0.2) -- (1.2,0.7);
     \filldraw [red] (1.2,0.7) circle (2pt)  node[anchor=east] {h};
     \filldraw [red] (-2.5,0) circle (2pt) node[anchor=east] {l};
      
      \draw[color=black, ultra thick, ->] (4,0) --(6,0);
      
      %spherical knotoid with head at the same region as infinity
   \draw (10,0) circle (3cm);
      \draw[dashed] (10,0) ellipse (3cm and 1cm);   
      
      \draw[color=blue, thick] (9.8,3.3) .. controls (9.9,3.5) and (10.1,3.1) .. (10.2,3.3);
      \draw[color=blue, thick] (9.8,3.3) .. controls (9.9,3.1) and (10.1,3.5) .. (10.2,3.3);
      
      \draw[color=black, ultra thick, ->] (9,-3.5) --(7.7,-4.8);
      
      \draw[color=black, ultra thick,] (12.5,0) -- (11.5,0);
     \draw[color=black, ultra thick,] (11,0) -- (8,0); 
     \draw[color=black, ultra thick] (8,0) .. controls (7,0) and (6.75,-0.07)  .. (7.5,-0.5);     
     \draw[color=black, ultra thick] (7.5,-0.5) .. controls (9.5,-2) and (11,-2) .. (11.25,0);
     \draw[color=black, ultra thick] (11.25,0) .. controls (11.25,1) and (10.5,2)  .. (9.35,0.2);
     \draw[color=black, ultra thick,] (9.15,-0.2) -- (8.8,-0.7);
     \filldraw [red] (12.5,0) circle (2pt)  node[anchor=north] {h};
     \filldraw [red] (8.8,-0.7) circle (2pt) node[anchor=east] {l};

      %planar knotoid with head untrapped
      
       \draw[dashed] (5,-7)circle (3cm);
       
        \draw[color=black, ultra thick,] (8,-7) -- (6.5,-7);
     \draw[color=black, ultra thick,] (6,-7) -- (3,-7); 
     \draw[color=black, ultra thick] (3,-7) .. controls (2,-7) and (1.75,-7.07)  .. (2.5,-7.5);     
     \draw[color=black, ultra thick] (2.5,-7.5) .. controls (4.5,-9) and (6,-9) .. (6.25,-7);
     \draw[color=black, ultra thick] (6.25,-7) .. controls (6.25,-6) and (5.5,-5)  .. (4.35,-6.8);
     \draw[color=black, ultra thick,] (4.15,-7.2) -- (3.8,-7.7);
     \filldraw [red] (8,-7) circle (2pt)  node[anchor=west] {h};
     \filldraw [red] (3.8,-7.7) circle (2pt) node[anchor=east] {l};

     \filldraw [blue] (0,3) circle (2pt) node[anchor=east] {};
     \filldraw [blue] (10,3) circle (2pt) node[anchor=east] {};
  }%
}
\newcommand{\RTHREE}{%Reidemeister 3
  \KP{
      \draw[color=black, ultra thick,] (1,-1) -- (-1,1);
      \draw[color=black, ultra thick,] (-1,-1) -- (-0.2,-0.2);
      \draw[color=black, ultra thick,] (0.2,0.2) -- (1,1);
      \draw[color=black, ultra thick,] (0.3,0.5) .. controls (0.1,0.8) and (-0.1,0.8) .. (-0.3,0.5);
      \draw[color=black, ultra thick,] (0.5,0.3) -- (0.8,0);
      \draw[color=black, ultra thick,] (-0.5,0.3) -- (-0.8,0);

     \draw[color=black, ultra thick,<->] (2,0) -- (3,0);
     
     \draw[color=black, ultra thick,] (6,-1) -- (4,1);
      \draw[color=black, ultra thick,] (4,-1) -- (4.8,-0.2);
      \draw[color=black, ultra thick,] (5.2,0.2) -- (6,1);
      
      \draw[color=black, ultra thick,] (5.3,-0.5) .. controls (5.1,-0.8) and (4.9,-0.8) .. (4.7,-0.5);
      \draw[color=black, ultra thick,] (5.5,-0.3) -- (5.8,0);
      \draw[color=black, ultra thick,] (4.5,-0.3) -- (4.2,0);
     
  }%
}
\newcommand{\SRTHREE}{%singular Reidemeister 3
  \KP{
      \draw[color=black, ultra thick,] (0.5,-0.5) -- (-1.5,1.5);
     \draw[color=black, ultra thick,] (-0.5,-0.5) -- (1.5,1.5);
     \draw[color=black, ultra thick,] (-0.8,1) -- (0.8,1);
     \draw[color=black, ultra thick,] (1.2,1) -- (2,1);
     \draw[color=black, ultra thick,] (-1.2,1) -- (-2,1);
    
     \draw[color=black, ultra thick,<->] (3,0.5) -- (4,0.5);

     \filldraw [black] (0,0) circle (2pt) node[anchor=north west] {};
     
     \draw[color=black, ultra thick,] (8.5,-0.5) -- (6.5,1.5);
     \draw[color=black, ultra thick,] (5.5,-0.5) -- (7.5,1.5);
     \draw[color=black, ultra thick,] (6.2,0) -- (7.8,0);
     \draw[color=black, ultra thick,] (8.2,0) -- (9,0);
     \draw[color=black, ultra thick,] (5.8,0) -- (5,0);
     
     \filldraw [black] (7,1) circle (2pt) node[anchor=north west] {};

  }%
}
\newcommand{\SFLIP}{%Singular crossing flip
  \KP{
    
    \draw[color=black, ultra thick,] (0,0) .. controls (-1,0.5) and (1.2,1) .. (1,1.5);
    \draw[color=black, ultra thick,] (0,0) .. controls (0.5,0.2)  .. (0.1,0.5);
    \draw[color=black, ultra thick,] (-1,1.5) -- (-0.2,0.75);
    
    \draw[color=black, ultra thick,] (-1,-1.5) -- (-0.2,-0.75);  
    \draw[color=black, ultra thick,] (0,0) .. controls (0.5,-0.2)  .. (0.1,-0.5);
     \draw[color=black, ultra thick,] (0,0) .. controls (-1,-0.5) and (1.2,-1) .. (1,-1.5);

    \filldraw [black] (0,0) circle (3pt) node[anchor=north west] {};
     \draw[color=black, ultra thick,<->] (3,0) -- (4,0);
     
        \draw[color=black, ultra thick,] (6,-1.5) -- (8,1.5);
       \draw[color=black, ultra thick,] (8,-1.5) -- (6,1.5);
       
           \filldraw [black] (7,0) circle (3pt) node[anchor=north west] {};

  }%
}
\newcommand{\RONE}{%Reidemeister 1
  \KP{
      \draw[color=black, ultra thick,] (-0.3,-0.3) -- (-1,-1);
     \draw[color=black, ultra thick,] (-1,1) .. controls (1,-2) and (1,1) .. (0,0);
    
     \draw[color=black, ultra thick,<->] (2,0) -- (3,0);
     
    \draw[color=black, ultra thick,] (4,-1) .. controls (6,-1) and (6,1) .. (4,1);

  }%
}
\newcommand{\RTWO}{%Reidemeister 2
  \KP{
      \draw[color=black, ultra thick,] (-1,-1) .. controls (1,-1) and (1,1) .. (-1,1);
      
      \draw[color=black, ultra thick,] (1,1) -- (0.5,0.7);
      \draw[color=black, ultra thick,] (1,-1) -- (0.5,-0.7);
      
      \draw[color=black, ultra thick,] (0.2,0.5) .. controls (0,0.4) and (0,-0.4) .. (0.2,-0.5);
     \draw[color=black, ultra thick,<->] (2,0) -- (3,0);
     \draw[color=black, ultra thick,] (4,-1) .. controls (5,-1) and (5,1) .. (4,1);
     
      \draw[color=black, ultra thick,] (6,-1) .. controls (5,-1) and (5,1) .. (6,1);

  }%
}
\title{Finite type invariants for knotoids}
\author{
  Manousos Manouras \\
  School of Applied Mathematical and Physical Sciences \\
  National Technical University of Athens \\
   GR-15780 Athens, Greece \\
  \texttt{manousosmanouras@hotmail.com} \\
  %% examples of more authors
   \And
 Sofia Lambropoulou \\
  School of Applied Mathematical and Physical Sciences \\
  National Technical University of Athens \\
   GR-15780 Athens, Greece \\
  \texttt{sofia@math.ntua.gr} \\
   \And
 Louis H. Kauffman \thanks{Kauffman’s work was supported by the Laboratory of Topology and Dynamics, Novosibirsk State University (contract no.14.Y26.31.0025 with the Ministry of Education and Science of the Russian Federation).
} \\
  Department of Mathematics, Statistics and Computer
Science \\
  University of Illinois at Chicago \\
   851 South Morgan St., Chicago
IL 60607-7045, U.S.A. \\
and \\ 
Department of Mechanics and Mathematics \\ 
Novosibirsk State University \\ 
Novosibirsk, Russia \\
  \texttt{kauffman@math.uic.edu} \\
  %% \And
  %% Coauthor \\
  %% Affiliation \\
  %% Address \\
  %% \texttt{email} \\
}
\begin{document}
\maketitle

\begin{abstract}
We extend the theory of Vassiliev (or finite type) invariants for knots to knotoids using two different approaches. Firstly, we take  closures on knotoids to obtain knots and we use the Vassiliev invariants for knots, proving that these are knotoid isotopy invariant. Secondly, we define finite type invariants directly on knotoids, by extending  knotoid invariants to singular knotoid invariants via the Vassiliev skein relation. Then, for spherical knotoids we  show that there are non-trivial type-1 invariants, in contrast with classical knot theory where type-1 invariants vanish.  We give a complete theory of type-1 invariants for spherical knotoids, by classifying linear chord diagrams of order one, and we present examples arising from the affine index polynomial and the extended bracket polynomial.

\end{abstract}

\keywords{knotoids \and singular knotoids \and finite type invariants \and Vassiliev invariants \and linear chord diagrams \and regular diagrams \and extended bracket polynomial \and affine index polynomial \and universal invariant }

2010 Mathematics Subject Classification: 57M27; 57M25.
\section*{Introduction}

This paper initiates the study of Vassiliev or finite type invariants for knotoids. In this introductory section  a brief summary is made of what will be discussed in detail in the main part of the paper. Precise definitions and rigorous proofs are to be found in the sections that follow. 

The theory of knotoids,  introduced by V.~Turaev \cite{turaev2012knotoids}, is an extension of classical knot theory. Roughly, a knotoid diagram  in a surface $\Sigma$ is an open-ended knot diagram, with the two endpoints lying in any regions of $\Sigma$, while a knotoid is an equivalence class of knotoid diagrams, under the Reidemeister moves, so that  the endpoints (the leg and the head) remain in their regions (for precise definitions the reader is referred to Subsection~\ref{sub:knotoids}). Joining the endpoints of a knotoid diagram by a simple arc which crosses transversally other arcs of the knotoid, gives rise to a classical knot diagram. The height of a knotoid is the minimal number of intersections of all joining arcs over all knotoid diagrams of the knotoid. Turaev showed that the set of classical knots injects faithfully into the set of spherical knotoids. Knotoids have caught the attention of the mathematical community in the last years with many different results and with applications to other aspects of science, such as the topological study of proteins, \cite{Bartholomew, GUGUMCU2017186, Goundaroulis_2017, Goundaroulisprotein, Dorier_2018, Goundaroulis2019ASC, Gugumcu2016ASO, Braidoids, RailKnotoids2019, Barbensi, BarbensiBuck, Gugumcu2018BiquandleCI, Adams_Henrich, doi:10.1142/4256}.

\begin{figure}[H]
    \centering
    \includegraphics[scale=1]{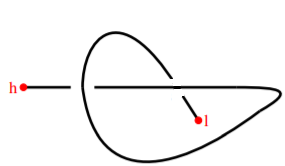}
    \caption{A knotoid diagram}
    \label{fig:ex_knotoid}
\end{figure}

The Vassiliev invariants \cite{Vassiliev1990} are extensions of classical knot invariants to singular knots, that is, knots which fail to be embeddings in the 3-space in finitely many transversal double points. The extension is possible using the relation called the Vassiliev skein relation: $v(\SC)=v(\PC)-v(\NC)$. (For precise definitions the reader is referred to Subsection~\ref{sub:fti_knots}). These invariants behave like building blocks for classical knot invariants, like the  Jones polynomial (see~\cite{jones1985}) whose coefficients in the Taylor expansion are Vassiliev invariants. It is indeed a very strong family of knot invariants, as shown by the proof of the Vassiliev-Kontsevich Theorem \cite{Kontsevich1993}. Nevertheless, it is not yet known whether Vassiliev invariants can classify knots or even if they detect the unknot.

 Trying to extend the theory of  finite type invariants for knots to knotoids one can use two different approaches. The immediate one is to  take various types of closures on singular knotoid diagrams (see Figure~\ref{fig:sing_knotoid} for an example; for precise definitions see Definitions~\ref{defn:singular_knotoid_diagram} and~\ref{defn:rvi}), namely  the under (or over) closure, the virtual closure and a special type of singular closure, to obtain singular knots or singular virtual knots. 
 Then we apply on the resulting  knots (or virtual knots)  known Vassiliev invariants for knots (or virtual knots).  These will be finite type invariants for (virtual) knotoids too.   
 In general,  both classical knot theory and virtual knot theory (introduced by L.H.~Kauffman~\cite{KAUFFMAN1999}) contribute to the the invariants of knotoids by applying classical invariants to the under (or over) closure of the knotoid, and by applying virtual invariants to the virtual closure of the knotoid. 
However, no type of closure of knotoids classifies knotoids; one can easily construct examples of non-equivalent knotoids having isotopic closures.

  The second approach is to define finite type invariants directly on knotoids, by extending  knotoid invariants to singular knotoid invariants via the Vassiliev skein relation. This approach is our main focus.

\begin{figure}[H]
    \centering
    \includegraphics[scale=1]{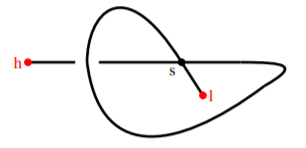}
    \caption{A singular knotoid diagram}
    \label{fig:sing_knotoid}
\end{figure}

 In this work we consider mainly knotoid diagrams in the plane and in the 2-sphere and we show that there are non-trivial Vassiliev invariants of type-1 for such knotoids. This is in contrast with classical knot theory where type-1 invariants vanish. See Remark~\ref{rem:v_1_knot_trivial}. The first tool in our theory is the introduction of linear chord diagrams, which extend the combinatorial approach of circular chord diagrams for singular knots. A linear chord diagram of order  $n$  is an oriented closed interval, equipped with a distinguished set of $n$ disjoint pairs of distinct points being connected with immersed unknotted arcs in the sphere or in the plane in general position, the `chords' (see also Definition~ \ref{defn:lcd}). See Fig.~\ref{fig:lcd} for examples. 

\begin{figure}[H]

 \centering
\includegraphics[height=3.5cm]{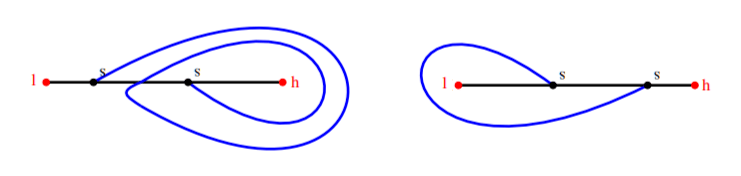}
 
  \caption{Linear chord diagrams}
  
  \label{fig:lcd}
\end{figure}

 We give a complete theory of Vassiliev invariants of type-1 for spherical knotoids, by classifying linear chord diagrams of order one.  As we show, a spherical linear chord diagram of order one can be abstracted by its winding number, as illustrated in Fig.~\ref{fig:alcd}. We then classify spherical knotoid diagrams with one singular crossing, up to singular equivalence. Singular equivalence comprises rigid vertex isotopy and classical crossing switches. Switching classical crossings is equivalent to considering flat diagrams (homotopy classes of diagrams with no extra information over/under at crossings) or descending diagrams (diagrams which `descend' as traversed starting from specific points; see Definition~\ref{defn:descending}). In this paper we take the approach of descending diagrams, as it allows us to make use of the lifting of knotoid diagrams to space curves. The reader is referred to Definitions~\ref{defn:rvi}, \ref{defn:sequivalence} and  \ref{defn:descending} and to Figures~\ref{fig:skmoves}, \ref{fig:desc_triv} and~\ref{fig:rrkf}.

\begin{figure}[H]

 \centering
 \includegraphics[height=3.5cm]{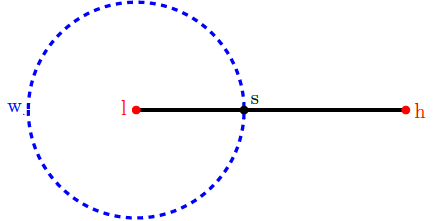}
  \caption{The abstraction of a chord diagram with winding number $w \in \Z$.}
  \label{fig:alcd}
\end{figure}

We prove Theorem~\ref{thrm:class}, which is our first main result and states that: 

{\it Every singular spherical  knotoid diagram with one singularity is singular equivalent to exactly one regular diagram. Moreover, all regular diagrams are distinct and they are classified by the winding number of the singular loop.}

A regular diagram  is a chosen representative of the singular equivalence class of a singular spherical  knotoid  with one singularity. More precisely, a regular diagram is a descending spherical knotoid diagram  with one singular crossing, containing  the minimal number of real crossings among all diagrams in its rigid vertex isotopy class, and such that the singularity is located `next to'  the leg of the knotoid, in the sense that there exist no other crossings between the leg and the singular crossing. See Definition~\ref{defn:rkd} and  Fig.~\ref{fig:rd} for an example. We recall that, when traversing a descending  knotoid diagram from leg to head, every classical crossing is encountered firstly as an overcrossing.

\begin{figure}[H]
 \centering
\includegraphics[height=6cm]{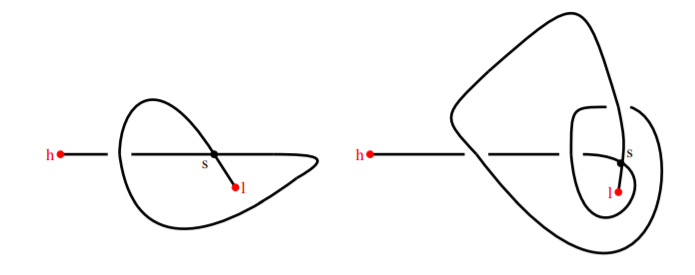}
  \caption{A regular diagram }
  \label{fig:rd}
\end{figure}

We proceed with proving Theorem~\ref{thrm:integr}, which is an integration theorem for type one invariants of spherical knotoids, and which is our second main result, stating that:

{\it Every function on linear chord diagrams which respects the one-term relation gives rise to a type-1 knotoid invariant.}

As a corollary to Theorems~\ref{thrm:class} and~\ref{thrm:integr}  we have that there are non-trivial type-1 invariants for spherical knotoids. Moreover, there are non-trivial type-1 invariants  also for planar knotoids, since a spherical knotoid corresponds uniquely to a planar knotoid whose head lies in the outer region of $\R^2$ (minus the knotoid), see Lemma~\ref{lem:trsf}.

We further define in Section~\ref{s:sh} the singular height of a knotoid $k$ to be the minimal height over all singular knotoid diagrams in the singular equivalence class of $k$.
We show that the regular diagrams realize the singular height and we then deal with the question whether we can find the singular height of a  singular knotoid just by looking at  the corresponding chord diagram. This is straightforward for spherical knotoids but it is tricky for planar so we conjecture the formula that intuitively holds, leaving it for future work.

 We conclude the paper by presenting examples of type-1 invariants for  knotoids. We show that the affine index polynomial, defined by L.H. Kauffman for virtual knots~\cite{doi:10.1142/S0218216513400075}, after  generalizing and reformulating an invariant of A. Henrich~\cite{Henrich}, and reformulated for knotoids in \cite{GUGUMCU2017186}, is a type-1 invariant (in analogy to Henrich's result for virtual knots). We  also define  a new  invariant $\bar{v}$ that is winding number detector, since it has distinct evaluation for all regular diagrams, using a good application of the integration theorem. The invariant  $\bar{v}$ is a universal type-1 invariant for spherical knotoids and it is related to the Gluing invariant of Henrich. We further reformulate the affine index polynomial using weight assignments at linear chord diagrams and prove that it is less general than $\bar{v}$.
 
 We also consider higher order Vassiliev invariants for spherical knotoids via the Jones polynomial, the Kauffman bracket polynomial and the Turaev extended bracket polynomial.

The outline of the paper is as follows. In Section~\ref{s:bg} we recall the basic definitions and results from the theory of knotoids and we give examples of some strong knotoid invariants. We also recall briefly the basics of virtual knot theory.  Then, we summarize the theory of finite type invariants and chord diagrams and recall the specific formulas for the Vassiliev invariants arising from the Jones polynomial. In Section~\ref{s:sk} we give the definition of a singular knotoid and present different types of closures for knotoids. We continue with Section~\ref{s:fti} in which we  define finite type invariants for knotoids in two ways: using the closures to induce invariants of knotoids from invariants of knots, and by defining finite type invariants directly on knotoids. In Section~\ref{s:lcd} we extend the classical chord diagrams to linear chord diagrams, which correspond to singular equivalence classes of singular knotoids. The main results are in Section~\ref{s:toi} in which we describe the notion of a regular knotoid diagram and with this we classify singular knotoid diagrams with one singularity in Theorem~\ref{thrm:class}. Then, with Theorem~\ref{thrm:integr} we demonstrate a way to produce various type-1 knotoid invariants using functions on linear chord diagrams. Finally, Section~\ref{s:exs} includes various examples of type-1 invariants, while in Section~\ref{s:sh} we discuss the singular height and in  Section~\ref{s:hoti} we show that the Turaev extended bracket gives rise to  higher type invariants.

\section{Background} \label{s:bg}

In this section we recall briefly the notions and tools used throughout the paper.

\subsection{Knotoids}\label{sub:knotoids}

In this subsection we shall recall some basic notions from the theory of knotoids. The theory was introduced by V.~Turaev in 2011 \cite{turaev2012knotoids}, where he showed that the theory of spherical knotoids extends faithfully  classical knot theory.  He also introduced several invariants for knotoids, such as the bracket and the extended bracket polymonial, while in \cite{GUGUMCU2017186} Gügümcü and Kauffman defined some more.  
  The theory of knotoids has caught the attention of
the mathematical community in the last years, enriching the theory with many more interesting results and finding applications to other aspects of science, \cite{Bartholomew, Goundaroulis_2017, Goundaroulisprotein, Dorier_2018, Gugumcu2016ASO, Braidoids, RailKnotoids2019, Barbensi, Gugumcu2018BiquandleCI, Adams_Henrich, Kutluay2020WindingHO}.

Let $\Sigma$ be a path connected, oriented $2$-manifold without boundary smoothly embedded in $\R^3$. A \textit{knotoid diagram} is a generic immersion $\gamma$ of the interval in a surface $\Sigma$ whose singularities are only finitely many transversal double points, which are endowed with over- or undercrossing data. These double points are called crossings. $\gamma(0)$ is called the leg and $\gamma(1)$ is called the head of the knotoid diagram, while both comprise the endpoints of the knotoid diagram. We will actually call a knotoid diagram its image in $\Sigma$ rather than the immersion itself. Similarly,  a {\it multi-knotoid diagram} is a generic immersion of a disjoint union of the interval and some copies of the circle, and an isotopy class of a  multi-knotoid diagram is a {\it multi-knotoid}.
For our purposes in this paper the oriented manifold will be mostly the $2$-sphere or the plane. 

Let K be a knotoid diagram with $n$ crossings. By ignoring the over/under information at each crossing of the diagram $K$ and regarding crossings as vertices, we obtain a connected planar graph with $n+2$ vertices, $n$ of which correspond to the crossings and two of which correspond to the endpoints of $K$. This graph is called the \textit{underlying graph} of the
knotoid diagram K. Here we understand that one can view every crossing as a $4$-valent vertex and the endpoints as $1$-valent vertices. Note that the underlying graph divides $\Sigma$ into $n+1$ local regions.

In classical knot theory, ambient isotopy is generated  combinatorially by three local moves on diagrams, the {\it Reidemeister moves}, together with   planar or disc  isotopy. Note that in the knotoid theory an  isotopy may displace the endpoints, but may not pull a strand adjacent to an endpoint over or under a transversal strand. This justifies the notion of the {\it forbidden moves} $\Omega_+, \Omega_-$,  illustrated in Fig.~\ref{fig:fm}.

Two knotoid diagrams $K_1, K_2$ are called \textit{isotopic} if they differ by disc isotopies of $\Sigma$  and the Reidemeister moves. An equivalence class of a knotoid diagram is called a {\it knotoid}. See Fig.~\ref{fig:iso}.

The set of knotoids in the surface $\Sigma$ is denoted by $K(\Sigma)$.

\begin{figure}[H]
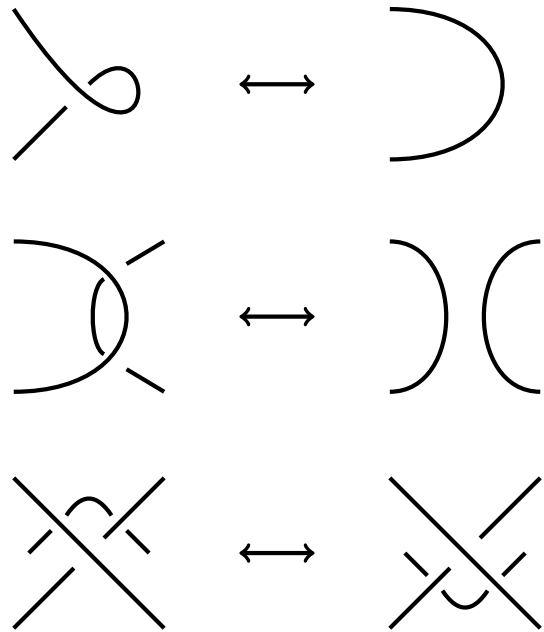


\centering

 \RONE

 \RTWO
 \vspace{30pt}
 
 \RTHREE 
 
 \caption{The Reidemeister moves}
 \label{fig:iso} 
\end{figure}

\begin{figure}[H]

\centering
\begin{tikzpicture}

    \draw[color=black, ultra thick,] (0,1.5) -- (0,-1.5);
      \draw[color=black, ultra thick,] (-1,0) -- (-0.3,0);
    
     \draw[color=black, ultra thick,<->] (2,0) -- (3,0);
     
     \draw[color=black, ultra thick,] (2.5,0) -- (2.5,0) node[anchor=south] {$\Omega_+$};

     \filldraw [red] (-0.3,0) circle (2pt) node[anchor=west] {};

     \draw[color=black, ultra thick,] (5.5,1.5) -- (5.5,0.2);
     \draw[color=black, ultra thick,] (5.5,-0.2) -- (5.5,-1.5);
      \draw[color=black, ultra thick,] (4.5,0) -- (6.5,0);
      
       \filldraw [red] (6.5,0) circle (2pt) node[anchor=north] {};
       
       \draw[color=black, ultra thick,<->] (-3,0) -- (-2,0);
       
       \draw[color=black, ultra thick,] (-2.5,0) -- (-2.5,0) node[anchor=south] {$\Omega_-$};
       
        \draw[color=black, ultra thick,] (-5.5,1.5) -- (-5.5,-1.5);
        \draw[color=black, ultra thick,] (-4.5,0) -- (-5.3,0);
        \draw[color=black, ultra thick,] (-5.7,0) -- (-6.5,0);

     \filldraw [red] (-4.5,0) circle (2pt) node[anchor=north west] {};

 \end{tikzpicture}
 \caption{The forbidden moves for knotoids}
 \label{fig:fm} 
\end{figure}
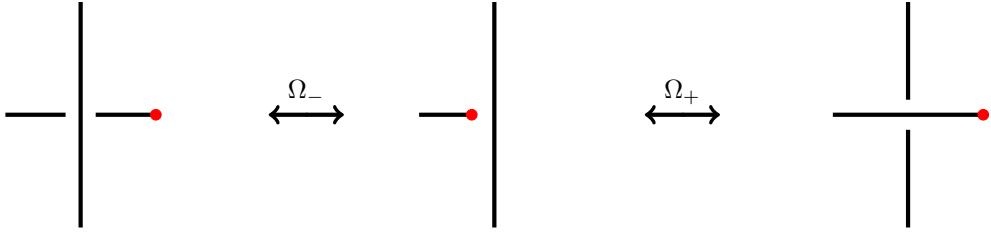

Note that isotopy of spherical knotoids corresponds to isotopy of an $(1,1)$-tangle in the annulus with the two ends attached to the two boundary components and allowed to slide along them (where the annulus is formed by removing the two points $l,h$ in $S^2$). 

 The {\it height} (or complexity) of a knotoid diagram $K$ is the minimum number  of  intersections that are created between  $K$  and a  simple arc, $\alpha$, joining the leg and the head of  $K$, such that $\alpha$ crosses $K$  transversely and only in finitely many double points   \cite{turaev2012knotoids,Gugumcu2016ASO}. Such an arc is a  {\it shortcut} for the knotoid diagram  $K$. 
 Note that a shortcut creates only transversal double points (and not triple points) so it is disjoint from the crossings.
  Furthermore, the {\it height} of a knotoid $k$ is the least number $h$, such that there exists a diagram of $k$ having height $h$.
 
 The spherical knotoids of zero height are the {\it knot-type knotoids} and their isotopy classes correspond bijectively to the classical knot types. The spherical knotoids of height greater than zero are called {\it pure knotoids}.

Given two spherical knotoid diagrams $K_1$, $K_2$ of knotoids $k_1, k_2 \in K(S^2)$ one can form the {\it product} $k_1 \cdot k_2$ by deforming both $K_1,K_2$ such that their heads are in a small neighbourhood of the point at infinity. This is achievable using isotopy in the sphere. Then we attach a copy of $K_2$ to the head of $K_1$ in a small neighborhood of the latter. Note that this can be done without ambiguity since the copy of $K_2$, being a compact subset of $S^2$, can be reduced by a sufficiently small scale, so as not to intersect $K_1$ at any point.
Here we see $S^2$ as the one-point compactification of $\R^2$, adding the point denoted by $\infty$.

Furthermore, given a knot $\kappa$ and a knotoid $k$ one can define the {\it product} $k \cdot \kappa$ in the following manner. Present $\kappa$ by an oriented knot diagram $D$ in $S^2$ and pick a small open arc $\alpha \subset{D}$ disjoint from the crossings. Then $K = D-\alpha$ is a knotoid diagram in $S^2$. This knotoid diagram is unique up to isotopy. Then take a knotoid diagram $K^{\prime}$ of $k$, as described above and multiply it with $K$. That is, $k \cdot \kappa :=  K^{\prime} \cdot K$.

One thing that is worth mentioning now for our purpose, is a special way to represent planar knotoids, which is due to   N. Gügümcü and L.H. Kauffman  \cite{GUGUMCU2017186}. Form a knot in the thickened surface $\Sigma \times F$ from the knotoid diagram in the surface $\Sigma$.
 For $K(\R^2)$, identify the plane of the planar knotoid $K$ with $\R^2 \times \{0\} \subset \R^3$. $K$ can be embedded into $\R^3$ by pushing the overpasses of the diagram into the upper half-space and the underpasses into the lower half-space in the vertical direction. This  creates an embedding of $[0,1]$ in $\R^3$, which can be viewed as a lifting of the knotoid diagram $K$ in $\R^3$. 
 The leg and the head of the diagram are attached to the two lines, $\{l\}\times\R$ and $\{h\}\times\R$ that pass through the leg and the head, respectively and are perpendicular to the plane of the diagram. Moving the endpoints of $K$ along these lines gives rise to open oriented curves embedded in $\R^3$ with two endpoints lying on each line. Such a curve is said to be the rail lifting of a knotoid diagram. An example of a knotoid $K_2$ and its rail lifting $K_1$ is illustrated in Fig.\ref{fig:rail}.
 \begin{figure}[H]
     \centering
     \includegraphics[height=10cm]{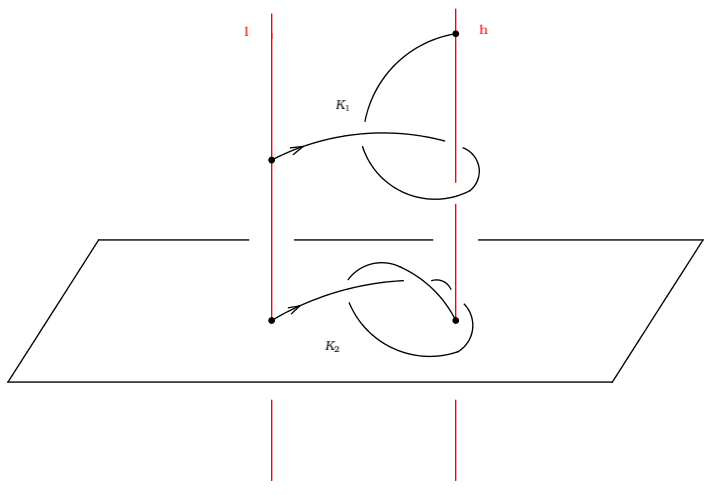}
     \caption{Example of a knotoid and its rail lifting}
     \label{fig:rail}
 \end{figure}

 Two smooth open oriented curves embedded in $\R^3$ with the endpoints
that are attached to two special lines, are said to be \textit{line isotopic} (or  \textit{rail isotopic} \cite{RailKnotoids2019}) if there is a smooth ambient isotopy of the pair $(\R^3 / \{t\times \R, h \times \R\}, t\times \R \cup h \times \R)$, taking
one curve to the other curve in the complement of the lines, taking endpoints to endpoints, and lines to lines.

In \cite{GUGUMCU2017186} Gügümcü and Kauffman prove that there is a one-to-one correspondence between the set of knotoids
in $\R^2$ and the set of line-isotopy classes of smooth open oriented curves in $\R^3$, which have their two ends attached to the two parallel lines. Furthermore, in \cite{RailKnotoids2019} D. Kodokostas and S. Lambropoulou develop the theory of {\it rail knotoids}, which are projections of line curves in the plane of the two parallel lines (the `rails').

\subsection{A digression on virtual knots}\label{sub:virtual}

Virtual knot theory was introduced by L.H.~Kauffman in 1998 \cite{KAUFFMAN1999}. We recall that a {\it virtual knot diagram} is an immersion of $S^1$ in the plane, containing finitely many double points, some of which are classical crossings and some are virtual crossings. The  virtual crossings can be viewed as permutation  crossings with no `under' or `over' specification. Virtual isotopy comprises the planar isotopy, the Reidemeister moves among classical crossings and the extra isotopy moves involving also virtual crossings, as illustrated in Fig.~\ref{fig:virtual}.  We note that the isotopy moves involving also virtual crossings are special cases of the so-called {\it detour move}, whereby, any sequence of virtual crossings only can be cut out and replaced by a connection elsewhere in the diagram.
 A {\it virtual knot} is a virtual isotopy class of virtual knot diagrams. We note that  Reidemeister moves of type III, involving two classical crossings and one virtual are not allowed in this theory.

Virtual knotoid diagrams are defined in \cite{GUGUMCU2017186} to be the knotoid diagrams in $S^2$ with an extra combinatorial structure called virtual crossings, viewed again as permutation  crossings. Virtual knotoid is an equivalence class of virtual knotoid diagrams up to virtual isotopy away form the endpoints.

\begin{figure}[H]
    \centering
%\VRONE
%\VRTWO
%\vspace{30pt}
%\VRTHREE
%\vspace{30pt}
%\detour
\includegraphics[scale=0.9]{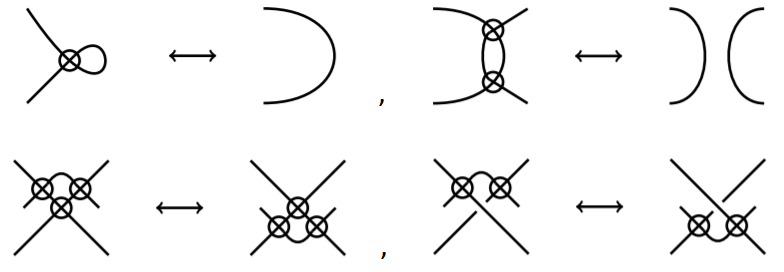}
    \caption{Isotopy moves involving virtual crossings.  }
    \label{fig:virtual}
\end{figure}

\subsection{Finite type invariants and chord diagrams}\label{sub:fti_knots}

A \textit{singular knot} is a mapping of $S^1$ in $\R^3$ that fails to be an embedding only in finitely many points where we have only transversal self-intersections, the {\it singular crossings}. Singular knots, like classical knots, are best understood by their plane projections. A \textit{singular knot diagram} is a plane curve whose only singularities are transversal double points. These points are endowed with information over, under or singular. A singular knot diagram can be viewed as a regular projection of a singular knot in some vertical direction, with respect to a chosen plane. Two singular knots are said to be {\it rigid vertex isotopic} if any two diagrams of theirs differ by a fine sequence of disc isotopies, the Reidemeister moves for classical knots, and the rigid vertex isotopy moves involving singular crossings. See Figs.~\ref{fig:iso} and \ref{fig:skmoves}.  See for example \cite{Vassiliev1990}, \cite{BARNATAN1995423}, \cite{KAUFFMAN2004DIAGR}.

\begin{figure}[H]
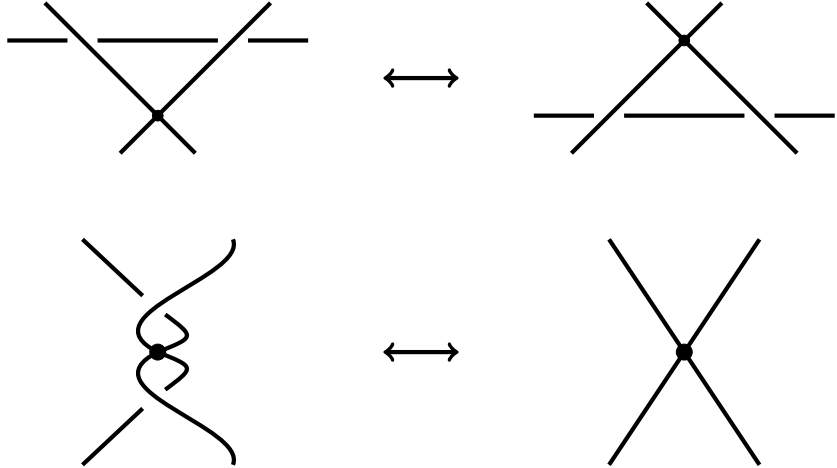


\centering

\SRTHREE 
\vspace{30pt}
\SFLIP
 
 \caption{Rigid vertex isotopy moves for singular crossings}
\label{fig:skmoves}
\end{figure}

Any knot invariant $v$ can be extended to an invariant of singular knots using the \textit{Vassiliev skein relation}:
\begin{equation} \label{eqn:vskein}
v(\SC) = v(\PC) - v(\NC)
\end{equation} 
The small diagrams in the relation denote diagrams that are identical except for the regions in which they differ only as indicated in the small diagrams.  Using this relation successively, one can extend $v$ to singular knots with an arbitrary number of singular crossings. While there are many choices when taking and resolving a sequence of singular crossings,  in fact the complete resolution yields the alternating sum
$$
\sum_{\e_1=\pm 1, \dots ,\e_n=\pm 1}(-1)^{|\e|}v(K_{\e_1 \e_2 \dots \e_n}) 
$$

where $|\e|$ is the number of $-1$'s in the sequence of $\e_i$ and $K_{\e_1 \e_2 \dots \e_n}$ the knot obtained by positive/negative resolution of each singular crossing of $K$.

A knot invariant $v$ is said to be of \textit{finite type $k$} if there exists a $k \in \mathbb{N}$ such that $v$ vanishes on every knot with more than $k$ singular crossings. We say that $v$ is of order (or type) $\le k$. See \cite{Vassiliev1990}, \cite{CHMUTOV2011SURVEY}.

Rigid vertex isotopy together with crossing switches generate an equivalence relation, the {\it singular equivalence}, which is used for the so-called `top row' diagrams.

\smallbreak

\begin{rem} \rm

The technique of (classical) crossing switches can be applied to any finite type invariant of order $k$ when applied to a `top row' diagram (that is, a singular knot with precisely $k$ singular crossings). Namely, one can use Eq.~\ref{eqn:vskein} for switching any real crossing at will.
As a result, the value of a finite type invariant of order $k$ depends only on the nodal structure of the singularities (the underlying 4-valent graph) and not on the embedding of the graph in space.  
\end{rem}

A {\it chord diagram of order $n$} comprises a circle with $n$ chords, which encode the number and the sequence of double points along a singular knot.  For illustrations see Fig.~\ref{fig:chord_diagrams}. 
A Vassiliev invariant of order $k$ gives rise to a function on chord diagrams with $k$ chords. The conditions that a function on chord diagrams should satisfy in order to come from a Vassiliev invariant, are  the so-called one-term and four-term relations. The vector space spanned by chord diagrams modulo these relations has the structure of a Hopf algebra. This Hopf algebra turns out to be dual to the algebra of the Vassiliev invariants and this isomorphism is the famous Vassiliev-Kontsevich theorem \cite{Kontsevich1993}. For detailed context see \cite{CHMUTOV2011SURVEY}. There are some interesting questions concerning this fact, including if there exists a similar structure for knotoids.

\begin{figure}[H]
    \centering
    \includegraphics[height=2.2cm]{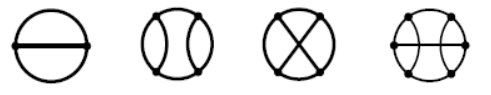}
    \caption{Chord diagrams}
    \label{fig:chord_diagrams}
\end{figure}

In the original work of V.~Vassiliev \cite{Vassiliev1990}, finite type invariants correspond to the zero-dimensional classes of a special spectral sequence. In fact, the space of knots is understood as the complement of the space of mappings of $S^1$ in $\R^3$ that fail in some way to be  embeddings (the so-called discriminant). Knot invariants are just locally constant functions in the space of knots and Vassiliev's formalism was trying to show that there exists such a spectral sequence that converges to the cohomology of the space of knots. These definitions and calculations have been very much simplified by the works of Bar-Natan \cite{BARNATAN1995423, BARNATAN2002}, Birman \& Lin \cite{Birman1993}, Stanford \cite{STANFORD19961027, Stanford2004}, Sawollek \cite{Sawollek} and Polyak \& Viro \cite{PolyakViro1994, Goussarov1998FinitetypeIO},  as we are interested only in this zero-class, and as  the use of the chord diagrams made a big part of the theory combinatorial.

\section{Singular knotoids and types of closures } \label{s:sk}

In this section we define the notion of singular knotoid in any connected, oriented surface $\Sigma$.

\subsection{Singular knotoid diagrams and rigid vertex isotopy}

\begin{defn}\rm\label{defn:singular_knotoid_diagram}
A \textit{singular knotoid diagram} is a knotoid diagram with some (finite) undeclared double points which are transversal  (i.e. the two corresponding velocity vectors of the curve are linearly independent). See Fig.~\ref{fig:skd} for some examples.
\end{defn}

\begin{figure}[H]

 \centering
\includegraphics[height=3.5cm]{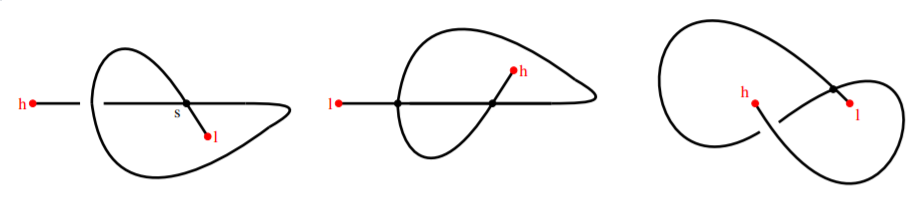}

  \caption{Singular knotoids}
\label{fig:skd}  
\end{figure}

\begin{defn}\rm\label{defn:rvi}
A {\it rigid vertex isotopy for singular knotoid diagrams} is generated by locally planar isotopy and the Reidemeister moves for classical knotoids (recall Fig.~\ref{fig:iso}), extended by the rigid vertex isotopy moves involving singular crossings (recall Fig.~\ref{fig:skmoves}).  An isotopy class of singular knotoid diagrams is called  {\it singular knotoid}.
\end{defn} 

Note that in the theory of singular knotoids we still have the restrictions of the forbidden moves (recall Fig.~\ref{fig:fm}).

\subsection{Types of closures}

\begin{defn} \rm 
Let $K$ be a (classical or singular)  knotoid diagram in a surface $\Sigma$. We call {\it classical closure of type `o' (resp. `u')} for $K$  the (classical or singular)  knot diagram obtained by joining the endpoints of $K$ by a shortcut such that all formed crossings are declared to be `over' (resp. `under') $K$. 
Consequently, the {\it classical closure of type `o' (resp. `u')} for (classical or singular)  knotoids in $\Sigma$ is the mapping $c^o $ (resp. $c^u $) from the set of (classical or singular) knotoids in $\Sigma$ to  the set of (classical or singular) knots in the thickened surface $\Sigma \times I$, induced by the over- (resp. under-) closure on knotoid diagrams. More precisely, for a knotoid $[K]$ (which is an equivalence class of knotoid diagrams) we define 
$$
\displaystyle{c^o([K]) := [K^o]} \quad \mbox {and} \quad \displaystyle{c^u([K]) := [K^u]}
$$
where  $K^o, K^u$ denote the (singular) knot diagrams in $\Sigma$ obtained via  the over- resp. under-closure of the knotoid diagram $K$. See top row of Fig.~\ref{fig:clos}. 
\end{defn}
As we will see the classical closure is well-defined for planar and spherical knotoids. However, in surfaces of higher genus there is a multiplicity of closures, related to non-equivalent homotopy classes.

\begin{figure}[H]

\centering

\includegraphics[height=10cm]{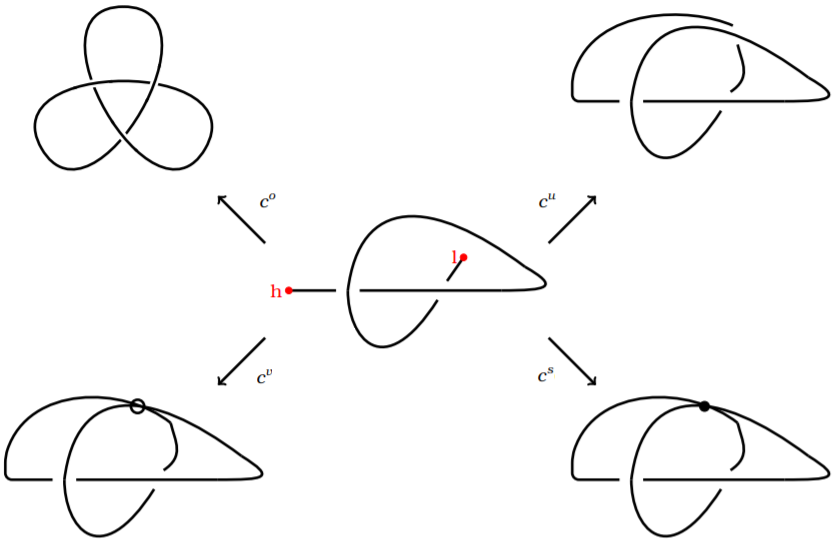}

 \caption{Types of closures: over- and under-closures (top row), virtual closure (bottom left), and singular closure of height 1 (bottom right)}
\label{fig:clos}
\end{figure}

\begin{lemma}  \label{cclosure}
The classical closure of type `u'  (resp. `o') for spherical or planar  classical knotoids (or singular knotoids) is well-defined up to isotopy (or rigid vertex isotopy).
\end{lemma}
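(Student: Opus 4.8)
The plan is to reduce well-definedness to two independent assertions: first, that for a fixed knotoid diagram the closure does not depend (up to knot isotopy) on the choice of shortcut; and second, that a single Reidemeister or disc-isotopy move applied to the knotoid diagram induces only Reidemeister moves and planar isotopies on the closure. Granting the first assertion, the second becomes routine, so I would treat the first as the heart of the matter. I would prove both statements for the type `u' closure; the `o' case is verbatim with ``over'' replacing ``under''.

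For independence of the shortcut, let $\alpha, \beta$ be two shortcuts joining the leg $l$ to the head $h$ of a fixed diagram $K$ in $\Sigma \in \{S^2, \R^2\}$. The key geometric fact, and the reason the statement is restricted to the sphere and the plane, is that these are the genus-zero cases: any two simple arcs with the same endpoints in $S^2$ (or in $\R^2$) are ambient isotopic rel their endpoints. I would fix such an isotopy carrying $\alpha$ to $\beta$ through embedded arcs, put it in general position with respect to $K$, and read off the elementary events. Because each arc in the family meets $K$ transversally and lies \emph{under} every strand it crosses, the only events are: an arc sweeping across a single strand of $K$, which is a Reidemeister~II move (two undercrossings created or cancelled), and an arc sweeping past a crossing of $K$, which is a Reidemeister~III move with the shortcut as the lowest of the three strands. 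Hence the closure changes only by Reidemeister moves, so $K^u_\alpha$ and $K^u_\beta$ are isotopic. Equivalently, and perhaps more transparently, one may push $K$ into $\Sigma \times I$ and realize the shortcut as an arc lying in the bottom layer $\Sigma \times \{0\}$, where $K$ is absent; the two shortcuts are then isotopic through an isotopy supported in that layer, since arcs in $\Sigma$ with fixed endpoints are unique up to isotopy.

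For compatibility with isotopy of $K$, I would take a single generating move from $K$ to $K'$, supported in a disk $D \subset \Sigma$. Using the first assertion, I would first reposition the shortcut so that it avoids $D$; this is harmless, since it only alters the closure by an isotopy. The move then takes place disjointly from the shortcut and lifts verbatim to the same Reidemeister move on the closure. A disc isotopy that displaces an endpoint simply drags the attached end of the shortcut along with it, producing a planar isotopy of the closed diagram. For singular knotoids the same scheme applies: the rigid vertex moves of Figure~\ref{fig:skmoves} are again supported in a disk the shortcut can avoid, the singular crossings of $K$ become singular crossings of $K^u$ untouched by the under-going shortcut, and so rigid vertex isotopy of $K$ induces rigid vertex isotopy of the closure. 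Concatenating over a sequence of moves then shows that isotopic (resp. rigid vertex isotopic) knotoid diagrams have isotopic (resp. rigid vertex isotopic) closures.

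The main obstacle I anticipate is the bookkeeping near the endpoints, where the terminal segment of $K$ and the initial segment of the shortcut join into a single strand of the closed diagram: one must verify that sweeping the shortcut past strands incident to $l$ or $h$, and in particular undoing any winding of the shortcut ``around'' the sphere, genuinely reduces to the Reidemeister moves above rather than to a forbidden move. This is precisely where the two hypotheses are used in tandem: the genus-zero topology guarantees that no essential winding survives (all shortcuts are isotopic), while the under-passing convention guarantees that every crossing event encountered during the sweep is a legitimate Reidemeister~II or~III move on the now-closed diagram, where the forbidden moves no longer apply. On a surface of higher genus the arc-uniqueness step fails, which is exactly the source of the multiplicity of closures mentioned after the statement.
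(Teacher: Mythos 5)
Your proposal is correct and follows essentially the same route as the paper's proof: both decompose well-definedness into (i) independence of the choice of shortcut on a fixed diagram and (ii) invariance under the generating isotopy moves, and both rest on the key observation that a shortcut lying entirely under the diagram can be slid freely by Reidemeister (and rigid vertex) moves, which is exactly why the genus-zero hypothesis is needed. Your write-up simply makes explicit what the paper leaves implicit --- the uniqueness of embedded arcs rel endpoints in the sphere or plane, the general-position analysis of the sweep, and the localization of each move in a disk avoided by the shortcut --- so it is a more detailed rendering of the same argument.
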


\begin{proof} \rm

Let  $[K]$ be a spherical or planar (singular) knotoid. We will prove the lemma for the `u' closure of $[K]$ (the proof for the `o' closure is completely analogous). Namely, we want to show that the under-closure determines a unique (singular) knot $[K^u]$ up to isotopy. 

We will first show that the `u' closure does not depend on the choice of shortcut on the same diagram. Indeed, let $K$ be any diagram of $[K]$ and let  $\alpha$ and $\beta$ be two different shortcuts connecting the two endpoints of $K$, so that in all formed crossings $\alpha$ and $\beta$ pass under $K$. Then $\beta$ can be freely isotoped to  $\alpha$ by a sequence of oriented Reidemeister moves (possibly together with rigid vertex isotopy moves), since they both lie under the rest of the diagram. 

Let now $K^\prime$ be another diagram of $[K]$. Then $K$ and  $K^\prime$ differ by a sequence of oriented Reidemeister moves (possibly together with rigid vertex isotopy moves). Let further $\alpha$ and $\alpha^\prime$ be two shortcuts for $K$ and  $K^\prime$ respectively, realizing the `u' closures. Then,  since they both lie under the rest of the diagrams, and by the Reidemeister moves (and possibly the rigid vertex isotopy moves) which are also valid in the closures, we have  that the resulting (singular) knot diagrams $K^u$ and ${K^\prime}^u$ are  (rigid vertex) isotopic. Hence, we obtain a unique (singular) knot $[K^u]$ up to isotopy. 
\end{proof}

\begin{rem} \rm
Suppose that for a knotoid we have a diagram $K$ which realizes its height. One might consider taking all possible closures (using both under and over crossing) by taking a diagram that realizes the height and an arc connecting leg to head and not passing through any other crossing, but creating $c$ new crossings. Then one would declare with $2^c$ options which crossing is under and which over. Given two  isotopic knotoid diagrams   $K^1$,$K^2$  with the same height, say $2$,  there is no way to prove that the knot $K^1_{o,u}$ is knot-isotopic with the knot $K^2_{o,u}$ or $K^2_{u,o}$, where $K^n_{o,u}$ is the knot that we obtain if, starting from the leg of $K^n$ and  following the arc $\alpha$, we declare the first extra crossing to be over-crossing and the second one to be under-crossing. Respectively, we denote $K^n_{i_1,i_2,...,i_c}$  to be the closure of the knotoid $K^n$ with height $c$, such that $i_1,...,i_c \in \{o,u\}$ and if $i_k=o$ this means that the $k$th extra crossing is declared to be over, while if $i_k=u$ this means that the  $k$th extra crossing is declared to be under, $k=1,\ldots,c$. The above means that $K^n_{i_1,i_2,...,i_c}$ is not well-defined, as different shortcuts will  possibly give non isotopic knots. 

For example if we take two distinct shortcuts in the knotoid illustrated in Fig.~\ref{fig:knotoid_barth} , we will end up with different normalized bracket polynomials with the $(u,o)$-closure.

\end{rem}
\begin{figure}[H]
    \centering
    \includegraphics[height=4cm]{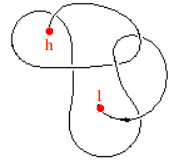}
    \caption{A knotoid that attains two distinct $(u,o)$-closures}
    \label{fig:knotoid_barth}
\end{figure}

We will now introduce the virtual closure. Recall the extra isotopy moves between virtual knot diagrams, illustrated in Fig.~\ref{fig:virtual}.

\begin{defn} \rm
We call {\it virtual closure for  knotoids } (classical, singular or virtual)  a mapping $c^v $ on the set of (classical, singular or virtual)  knotoids in a surface $\Sigma$ to  the set of virtual knots in the thickened surface $\Sigma \times I$, induced by the virtual closure on knotoid diagrams, whereby all extra crossings are declared  to be virtual when taking a shortcut for the knotoid diagram. Namely, 
$$
\displaystyle{c^v([K]) := [K^v]}
$$
where  $K^v$ denotes the virtual knot diagram in $\Sigma \times I$ obtained via  the virtual closure of a knotoid diagram $K$. See bottom left  of Fig.~\ref{fig:clos}. 
\end{defn}

We further have:

\begin{defn} \rm\label{defn:virtual_knotoid}
Two singular virtual knotoid diagrams are equivalent or {\it singular virtual isotopic} if they differ by a finite sequence of moves of the following types: planar isotopy, Reidemeister moves, detour moves for  virtual crossings, and by rigid vertex moves that involve classical and virtual crossings. Note that detour moves apply in general. That is, any sequence of virtual crossings only can be cut out and replaced by a connection elsewhere in the diagram, even when the diagram has both classical and singular crossings.
\end{defn}

\begin{lemma}
The virtual closure for spherical or planar knotoids (classical, virtual or singular) is well-defined up to virtual isotopy (and/or rigid vertex isotopy). 
\end{lemma}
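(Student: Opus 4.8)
The plan is to follow the same two-step template as in the proof of Lemma~\ref{cclosure}, but with the detour move (recall Fig.~\ref{fig:virtual} and Definition~\ref{defn:virtual_knotoid}) playing the role that the ``passing under the whole diagram'' argument played for the classical closure. Fix a spherical or planar (classical, virtual or singular) knotoid $[K]$; I want to produce a well-defined virtual knot $[K^v]$ in $\Sigma \times I$ up to virtual isotopy (and rigid vertex isotopy, in the singular case). First I would show that $K^v$ does not depend on the shortcut chosen on a fixed diagram $K$, and then that it does not depend on the chosen diagram of $[K]$.

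For the first step, let $\alpha$ and $\beta$ be two shortcuts joining the leg and the head of $K$, each meeting $K$ only in transversal double points that are declared virtual. Since every crossing of $\alpha$ with $K$ is virtual, the arc $\alpha$ is a purely virtual arc with prescribed endpoints, and by Definition~\ref{defn:virtual_knotoid} the detour move lets me cut it out and replace it by any other arc with the same endpoints whose crossings with $K$ are all virtual --- in particular by $\beta$. Hence the two closures along $\alpha$ and along $\beta$ are virtual isotopic, and the same argument, carrying the singular crossings along unchanged, works verbatim with rigid vertex moves when $K$ is singular.

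For the second step, let $K^\prime$ be another diagram of $[K]$. Then $K$ and $K^\prime$ differ by a finite sequence of oriented Reidemeister moves (together with rigid vertex isotopy moves in the singular case), none of which is a forbidden move (recall Fig.~\ref{fig:fm}). Each such move is supported in a small disc, and before performing it I can use the detour move to push the virtual shortcut out of that disc; the move is then a legitimate virtual Reidemeister (or rigid vertex) move on the closed diagram, since all of these moves are valid in virtual isotopy. Composing, I obtain a virtual isotopy from $K^v$ to ${K^\prime}^v$, so the closure descends to a well-defined map $c^v$ on $[K]$. The same reasoning applies identically on the sphere and in the plane, because the detour move imposes no constraint on how a virtual arc is rerouted.

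The step I expect to carry the weight of the argument is the use of the detour move in both places: one must check that an arc all of whose crossings are virtual really is determined up to virtual isotopy by its endpoints (so that any two shortcuts agree), and that it can always be cleared away from the support of a Reidemeister move. Both facts are exactly the content of the detour move as stated in Definition~\ref{defn:virtual_knotoid}, so the lemma reduces to invoking it correctly rather than to any new computation.
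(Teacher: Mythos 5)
Your proposal is correct and follows essentially the same route as the paper's proof: independence of the shortcut via the detour move (valid also in the presence of singular crossings), and independence of the representative diagram because every Reidemeister/rigid-vertex move on a knotoid diagram remains a valid move on its virtual closure. The only difference is that you spell out the detour-based rerouting around the support of each move, which the paper leaves implicit.
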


\begin{proof}
Clearly, the closing arc may move freely by the detour move, which is also valid in the presence of singular crossings, so the definition does not depend on the choice of shortcut. 
Moreover, any virtual/rigid vertex isotopy move between knotoid diagrams is also a valid isotopy move for their virtual closures. 
This is also the idea in the proof of the analogous result for classical knotoids in \cite{GUGUMCU2017186}.
\end{proof}

We say that a knotoid diagram is {\it prime} if it is not a connected sum of a knot diagram with a knotoid diagram or a product of two knotoid diagrams.

A knotoid is prime if it does not possess any connected sum in its equivalence class.

A \textit{local knot} is a knot diagram inside a disc, seen as an $(1,1)$-tangle across the disc.

Note that a prime knotoid diagram has no local knotting.

For (singular) knotoids of height one, we can also define a singular closure:

\begin{defn} \rm
%We call {\it singular closure for knotoids} (classical, singular or virtual) a mapping $c^s $ from the set of knotoid diagrams (classical, singular or virtual) in a surface $\Sigma$ to  the set of singular knot diagrams in $\Sigma$, induced by the classical closure on knotoid diagrams which realize the height, nodifying all extra crossings, when taking a shortcut for the knotoid diagram. Namely, 
%$$
%\displaystyle{c^s(K) := K^s}
%$$
%where  $K^s$ denotes the singular knot diagram in $\Sigma$ obtained via  the singular closure of the knotoid $K$. See bottom right  of Fig.~\ref{fig:clos}. 

A \textit{singular closure} for a knotoid diagram $K$ in a surface $\Sigma$,  which realizes height, is defined by the following procedure: Take a classical closure of $K$ using a shortcut that realizes height and nodify the extra crossings. The resulting singular knot diagram in $\Sigma$ is denoted by $K^s$. See bottom right of Fig.~\ref{fig:clos}. 

Furthermore, the singular closure of a knotoid diagram that does not realize height is defined by taking the singular closure of another knotoid diagram in its isotopy class that does realize height.
\end{defn}

\begin{lemma}\label{sclosurewelldef} 
The singular closure for classical spherical or planar prime knotoids of height one is well-defined up to rigid vertex isotopy.
\end{lemma}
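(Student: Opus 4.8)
The plan is to establish well-definedness in two stages, parallel to the proof of Lemma~\ref{cclosure}: first independence of the chosen height-realizing shortcut for a fixed diagram, and then independence of the chosen height-one diagram of the knotoid. By the definition of the singular closure on non-minimal diagrams, I may assume throughout that $K$ realizes height one and that $\sigma$ is a shortcut meeting $K$ in a single transversal point; nodifying that point yields a singular knot diagram $K^s$ with exactly one singular vertex. As a sanity check, note that the two crossing-resolutions of this vertex are precisely the under- and over-closures $K^u,K^o$ of Lemma~\ref{cclosure}, so the resolutions of $K^s$ are already known to be well-defined up to isotopy.

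First I would fix $K$ and compare two height-realizing shortcuts $\sigma_1,\sigma_2$. Since a shortcut is disjoint from the crossings of $K$, each $\sigma_i$ meets the diagram in the interior of a single edge. Writing $R_l,R_h$ for the regions containing the leg and the head, the single intersection forces $R_l$ and $R_h$ to be adjacent, and $\sigma_i$ crosses one edge $e_i\subset\partial R_l\cap\partial R_h$. If $e_1=e_2$, then since the underlying graph of $K$ is connected its complementary regions in $S^2$ are discs, so $\sigma_1$ and $\sigma_2$ are isotopic relative to their endpoint-regions; the two nodified closures then differ by a planar isotopy together with a slide of the singular vertex along $e_1$, and hence agree up to rigid vertex isotopy.

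The substantive case is $e_1\neq e_2$, where $R_l$ and $R_h$ are adjacent along several edges. Here I would isotope $\sigma_1$ to $\sigma_2$ by sweeping the arc across the sub-diagram of $K$ caught between $e_1$ and $e_2$: passes of the arc across a strand are realized by Reidemeister~II moves on the closed diagram, and whenever a strand must cross the nodified point I absorb it with the rigid vertex move of Fig.~\ref{fig:skmoves}; the net effect is to slide the singular vertex from $e_1$ to $e_2$. This is exactly the step that uses primeness: because a prime diagram carries no local knotting, the tangle swept over is inessential and the slide can be completed by finitely many Reidemeister and rigid vertex isotopy moves. I expect this to be the main obstacle, and I would also check whether primeness together with height one in fact forces $R_l$ and $R_h$ to share a single essential edge, in which case this case collapses to the previous one.

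For diagram-independence, let $K_1,K_2$ be two height-one diagrams of the knotoid, joined by a sequence of Reidemeister moves $K_1=D_0\to\cdots\to D_m=K_2$. I would transport a once-crossing shortcut along this sequence: before performing $D_j\to D_{j+1}$, the shortcut-independence just proved lets me re-route the current shortcut out of the support of the move, after which the move is local, disjoint from the nodified vertex, and extends verbatim to a Reidemeister or rigid vertex move between the singular closures. The forbidden moves of Fig.~\ref{fig:fm} guarantee that no strand is ever dragged across an endpoint, so the transported arc remains a shortcut meeting the diagram exactly once and every intermediate diagram continues to realize height one; thus the construction never leaves the admissible class. Concatenating the moves exhibits a rigid vertex isotopy from $K_1^s$ to $K_2^s$, which completes the argument.
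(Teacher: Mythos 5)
Your first stage is, in substance, the paper's entire proof. The paper also fixes a height-realizing diagram, takes two "neighbouring" once-intersecting shortcuts chosen disjoint from each other, observes that their union is a simple closed curve bounding a disc, and invokes primeness to conclude that the arc of $K$ caught in that disc is unknotted (any knotting there would be a local knot), whence the two nodified closures differ by rigid vertex isotopy. Your sweep across the intermediate tangle, realized by Reidemeister~II/III moves together with the rigid vertex move of Fig.~\ref{fig:skmoves}, with primeness invoked to make the swept tangle inessential, is the same mechanism in different packaging, and you place primeness at exactly the right spot: it is not cosmetic, since if the strand between the two gates carried a local trefoil while the segment running to the head carried a local figure-eight, the oriented smoothings of the two singular closures would be two-component links whose components have different knot types, so the closures would genuinely differ. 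Your write-up and the paper's are sketchy at the same step (why triviality of the tangle lets the slide terminate with no residual crossings), so on shortcut-independence you match the paper; your side speculation that primeness plus height one forces a single shared edge is false (a finger of the tail poked through the loop of the unifoil gives a prime height-one diagram whose leg and head regions share two edges), but you flagged it as a check, and nothing rests on it.

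The genuine gap is in your second stage — which, to be fair, the paper's own proof never addresses at all (it stops after shortcut-independence). Two claims carry your transport argument, and neither is justified. First, shortcut-independence on a fixed diagram says that any two once-crossing shortcuts yield rigid-vertex-isotopic closures; it does not produce a once-crossing shortcut avoiding a prescribed disc, and such a shortcut need not exist, since all the once-crossing "gates" between the leg region and the head region may run through the support of the next move. Existence, not uniqueness, is what the re-routing step needs. Second, the assertion that "every intermediate diagram continues to realize height one" is not a consequence of anything you proved: a Reidemeister sequence joining two height-realizing diagrams can pass through diagrams admitting no once-crossing shortcut (for instance, Reidemeister~II fingers can be piled up and wound around the leg so that, at an intermediate stage, every shortcut meets the diagram at least twice, even though the knotoid itself has height one). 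The forbidden moves of Fig.~\ref{fig:fm} are irrelevant here — they constrain what happens at the endpoints, not the diagrammatic height of intermediate diagrams. So the transport can get stuck, and diagram-independence requires a different idea (a controlled Reidemeister theorem staying within height-realizing diagrams, or an intrinsic characterization of the closure); as written, your stage two, like the paper's silence on the point, leaves that half of well-definedness unproved.
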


\begin{proof}
We will first argue that the  definition of singular closure does not depend on the choice of shortcut.  Indeed, since our diagram realizes the height one, there is at least one place where the leg is separated by the head by just one boundary arc. If there is only one such place, we are done. Suppose there are more places. We consider two neighbouring. Then the union of the associated  shortcuts makes a simple closed curve. Note that, in this case, one can choose them so as to not intersect each other. This closed curve bounds a disc, and in this disc the boundary arc is not  knotted, since this knotting would be by definition local, so the two shortcuts differ by rigid vertex isotopy.
\end{proof}

\begin{rem} \rm
Singular closure is well-defined only for height-one prime knotoids and, moreover, on diagrams that realize the height of the knotoid they represent. The requirement for prime knotoids becomes apparent in the proof of Lemma~\ref{sclosurewelldef}. To see the height-1 requirement, take, for instance, the counterexample of height two, shown in Fig.~\ref{fig:sing_ex}.

\begin{figure}[H]
    \centering
    \includegraphics[height=4cm]{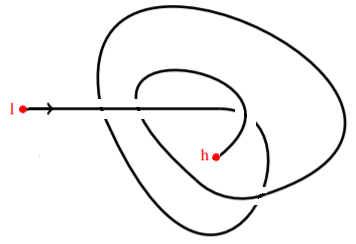}
    \caption{A knotoid diagram that admits two distinct singular closures}
    \label{fig:sing_ex}
\end{figure}

\noindent Indeed, calculating its affine index polynomial (see Subsection~\ref{sub:affine}) we obtain 
$$P_K=t^2+t^1+t^{-1}+t^{-2}-4$$
so, by \cite{GUGUMCU2017186} the knotoid is indeed of height $2$. Yet, it admits two distinct singular closures, as indicated in Fig.~\ref{fig:sing_clos}:  the closure $s_1$ illustrated by the left picture gives rise via an oriented smoothing of the singular crossings to a non-trivial link of $3$ components. However, the closure  $s_2$ illustrated by the right picture gives rise to the trefoil knot via an oriented smoothing of the singular crossings. So, the two singular closures are not isotopic and, therefore, the singular closure is not well-defined for height two.
A complete description of the oriented smoothing and that it is rigid vertex isotopy invariant can be found in \cite{KauffmanGraphs89}.

\end{rem}

\begin{figure}[H]
    \centering
    \includegraphics[height=4cm]{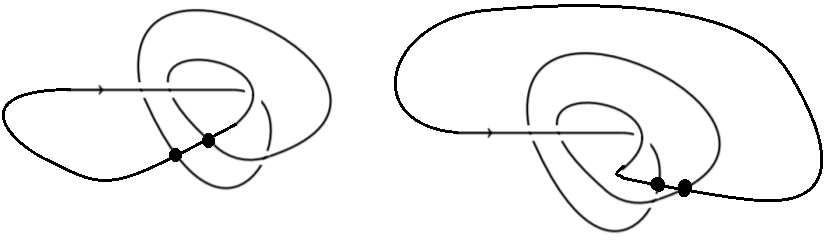}
    \caption{Two distinct singular closures}
    \label{fig:sing_clos}
\end{figure}

\begin{rem} \rm \label{rem:clos_inv}

If $k$ is a spherical or planar knotoid and $\bar{k}$ denotes any type of  closure that can be defined on $k$  (classical, virtual or singular), then this closure defines a map from knotoids to classical, virtual or singular knots, so that, by the lemmas above, the  knot $\bar{k}$ can be regarded as an invariant of the knotoid $k$. This association is not a complete invariant of knotoids, as one can easily construct examples of non-equivalent knotoids having isotopic closures. For example, there are non-trivial knotoids with trivial virtual closures. Yet, this association has been a source of invariants of knotoids by taking invariants of the closures. For example, the Jones polynomial and the affine index polynomial for knotoids are identical with the corresponding invariants of their virtual closures.
\end{rem}

\section{Finite type invariants for knotoids} \label{s:fti}

By Remark~\ref{rem:clos_inv} any finite type invariant of knots (classical or virtual) is also a finite type invariant for knotoids, which, however, is not expected to be classifying.   
In this paper we define intrinsically in terms of knotoids first order Vassiliev invariants of knotoids. These are defined in terms of weight systems and form the first step in a more general theory of Vassiliev invariants for knotoids. 
 
For the rest of the paper we  focus on knotoids  in $S^2$ (classical, singular, virtual, virtual singular). Occasionally we will be making more general definitions and comments, stating explicitly the generalization.

\subsection{Finite type invariants obtained by closures}

\begin{defn} \rm
Let $v$ be any $R$-valued finite type invariant for classical knots, where $R$ is a commutative ring. We call {\it classical closure finite type invariant for (classical) knotoids related to $v$} a mapping $v^c $ from the set of (classical) knotoid diagrams to $ R^2 $, defined in the following way: 
$$
\displaystyle{v^c(K) := (v(K^u), v(K^o))}
$$
where  $K^u, K^o$ are respectively the knot diagrams obtained via under- and over-closure, respectively, of a knotoid diagram $K$. 
\end{defn} 
 
What we want to prove is that given a finite type invariant for knots, $v$, induces an isotopy invariant for spherical knotoids using the classical closure.
For  isotopic knotoids $K_1, K_2$, and for any (knot) finite type invariant $v$, we want $v^c(K_1)=v^c(K_2)$, or equivalently we want $v(K^u_1)=v(K^u_2)$ and $v(K^o_1)=v(K^o_2)$ but this is true because $v$ is an isotopy invariant for knots and a knotoid  $K \subset{S^2}$ determines (via the under-closure) a unique knot $K_u \subset{S^3}$ (resp.~$K_o \subset{S^3}$) up to isotopy from Lemma~\ref{cclosure}.

\begin{defn} \rm
Let $v$ be any finite type invariant for virtual knots, valued in a commutative ring $R$. We call {\it virtual closure finite type invariant for knotoids related to $v$} a mapping $v^v $ from the set of (virtual) knotoid diagrams to   $R$, by computing the invariant on the resulting virtual knot. Namely,
$$
v^v(K) := v(K^v) 
$$
where $K$ is a   (virtual)  knotoid diagram and $K^v$ is the virtual knot diagram obtained by the virtual closure. 
\end{defn}

For finite type invariants of virtual knots, see for example \cite{KAUFFMAN1999}.

\begin{defn} \rm
Let $v$ be any finite type invariant for singular knots valued in a commutative ring $R$. We call {\it singular closure finite type invariant for prime knotoids of height one related to $v$} a mapping $v^s$ from the set of (singular) prime knotoid diagrams realizing the height to $R$, by computing the invariant on the resulting singular knot. Namely,
$$
v^s(K) := v(K^s) 
$$
where $K$ is a  (singular) knotoid diagram realizing the height and $K^s$ is the singular knot diagram obtained by the singular closure. One could  then say that this knotoid invariant is of order (or type) $\le n$ if it vanishes on all singular knotoids with more than $n-1$ singular crossings.
\end{defn}

\subsection{Finite type invariants defined directly on  knotoids}

\begin{defn}\rm\label{defn:sequivalence}
Two singular knotoid diagrams $K_1,K_2$ in a connected, oriented surface $\Sigma$ are said to be \textit{singular equivalent} if one can be deformed to the other by rigid vertex isotopy moves (recall Definition~\ref{defn:rvi}, Figs.~\ref{fig:iso} and \ref{fig:skmoves}) and real crossing switches. We will use the symbol $K_1\sim K_2$ for singular equivalence of the knotoids $K_1, K_2$.
\end{defn}

The principal idea of the combinatorial approach to the theory of finite type invariants is to extend a knotoid invariant $v$ to singular knotoids, by making use of the  Vassiliev skein relation, recall Eq.~\ref{eqn:vskein}. 

So we define: 

\begin{defn} \rm\label{defn:fti}
A knotoid invariant, $v$, is said to be a {\em finite type invariant of order (or type) $\le n$} if its extension on singular knotoids vanishes on all singular knotoids with more than $n$ singular crossings. Furthermore, $v$ is said to be of order (or type) $n$ if it is of order $\le n$ and not of order $\le n-1$.
\end{defn}

Let $v$ be a Vassiliev invariant of type $n$. The top row of $v$ is the set of values that $v$ assigns to the set of singular knots with precisely $n$ singular crossings (one can view them as abstract 4-valent graphs with $n$ nodes)

For any finite type invariant $v$  we consider the top row singular knotoid diagrams up to singular equivalence. Now, as in the classical case, any  finite type invariant remains unchanged under crossing switches on top row diagrams, due to the Vassiliev skein relation. Indeed, if $K$ is a top row diagram and has a positive (resp. negative) crossing we can switch this crossing by invoking the Vassiliev relation
$$
v(\PC) = v(\SC)+v(\NC). 
$$
But $v(\SC)=0$ because this diagram has $n+1$ singular crossings, since  $K$ is a top row diagram. Hence $v(\PC)=v(\NC)$. 

From the above, a top row singular knotoid may be represented by an appropriately chosen `regular' representative of its singular equivalence class.

For examples of finite type invariants defined directly on knotoids we prompt the reader to Sections~\ref{s:exs} and~\ref{s:hoti}.

\section{Linear chord diagrams } \label{s:lcd}

In the theory of finite type invariants for classical knots there is a natural way to encode the order of the singularities in a circular diagram  and then join each pair of points of the circle with a simple interior arc (obtaining the chord diagram).

For further work, we intend to examine the results of Petit \cite{Petit}, who generalized the finite type invariants of Henrich \cite{Henrich}, including her universal invariant, and based his generalization on the concept of the matrix for a virtual string, first introduced by Turaev \cite{Virtualstrings, Virtualstringscobordisms} and Silver \& Williams~\cite{openvirtualstrings}.

Since a  knotoid diagram is an immersion of $[0,1]$  in some surface, we must think of chord diagrams as `chords' joining points of an open-ended smooth curve in the surface, that is, a connected, oriented $1$-manifold with boundary two endpoints, which correspond to the two endpoints of the singular knotoid diagram. Each chord corresponds to precisely one singular crossing of a singular knotoid diagram, and the chords respect the forbidden moves.  With these restrictions there are many non-isotopic ways to join two points of the interval with a simple arc. This is not a technicality but a first attempt to understand the complexity of the problem since chord diagrams are closely connected with singular equivalence classes of knotoids.

\subsection{Linear chord diagrams in \texorpdfstring{$ S^2 $}{.}}

\begin{defn}\rm\label{defn:lcd}

A\textit{ linear chord diagram} of order (or degree) $n$  is an oriented closed interval, with its endpoints marked with `$h$'  and `$l$' (for `head' and `leg'). It is equipped with a distinguished set of $n$ disjoint pairs of distinct points which are connected with immersed unknotted arcs in the sphere in general position, the `chords'. We consider linear chord diagrams up to orientation preserving diffeomorphisms of the interval, and up to isotopy of the immersed arcs which fixes the ends. All the intersections between the chords themselves and between the chords and the interval are declared to be flat. The set of all linear chord diagrams of order $n$ will be denoted by $\displaystyle{A^{l}_n}$. 
\end{defn}

Fig.~\ref{fig:lcd} illustrates examples of linear chord diagrams. The symbol `$s$' stands for `singular crossing'. It is clear that there are many ways that two points can be connected with an immersed arc even though being considered up to arc isotopy and diffeomorphisms of the interval. For example, consider two points connected by two different arcs, the first one wrapping around the head with the counterclockwise orientation and the second wrapping around the leg  with the counterclockwise orientation. See Fig.\ref{fig:lcd_a_b}. These two arcs belong to different homotopy classes of the annulus (that is, the sphere with the points $h$ and $l$ removed).  
So one thinks that even the set  $\displaystyle{A^{l}_1}$ is not trivial. 

For the time we will restrict ourselves to the case $n=1$. We will need some lemmas in succession to classify all different linear chord diagrams of order $1$. First we consider two generators $a,b$: Generator $a$ represents a wrapping of the chord around the leg with the counterclockwise orientation,  meaning that, if we have a disjoint pair of points in the interval starting with the point closer to the leg with destination the point closer to the head and following the immersed arc, we go exactly once around the leg. Generator $b$ represents a wrapping of the chord around the head respectively. For the clockwise orientations we will use the symbols $a^{-1}$ and  $b^{-1}$ respectively.

\begin{figure}[H]
    \centering
    \includegraphics[height=2cm]{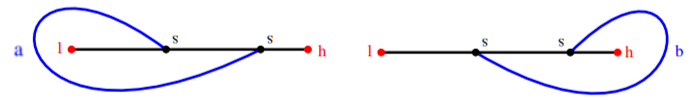}
    \caption{The generators $a,b$}
    \label{fig:lcd_a_b}
\end{figure}

We now define a multiplication operation on the set of linear chord diagrams  $A_{1}^l$ . 
Multiplication $x\cdot y$ means that we place all four ends in the same interval such that the two involving $x$ are both before the ones involving $y$, with respect to the natural orientation of the interval, and then we do the following procedure. All the intersections between $x$ and $y$ are declared to be flat. We then  take a neighbourhood of the sub-interval starting from the destination of $x$ and ending at the start of $y$. Inside there we connect the destination of $x$ with the start of $y$ in the unique way that the resulting curve is transversal to the interval precisely at both points. For an illustration see Fig.~\ref{fig:a*b}. The result is a pair of points of the interval together with an immersed curve joining them, whose interior meets the interval at every point transversely. 

The operation multiplication is well-defined up to flat isotopy and diffeomerphisms of the interval. It is also clearly associative. 
Note, further, that there are two trivial chord diagrams, in which the chord does not wind around the leg or the head: the one that the chord lies below the interval and the one that the chord lies above the interval, denoted by $1_u, 1_o$ respectively. 
Of course they are flat isotopic, hence $1_u = 1_o := 1$.  They both correspond to knot-type singular knotoids and so they can be identified with the classical chord diagram with one chord. 

The following result justifies the choice of the notation for $a^{-1}$ and $b^{-1}$.

\begin{lemma} \label{lem:a*a}
$a\cdot a^{-1} = 1\quad b\cdot b^{-1} = 1$.
\end{lemma}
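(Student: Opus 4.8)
The plan is to reduce the composite chord by hand, cancelling the two oppositely wound loops, and to use the winding number around the punctures only as the bookkeeping that tells us the cancellation is possible. First I would write out $a\cdot a^{-1}$ from the definition of the multiplication: placing the two feet of $a$ before the two feet of $a^{-1}$ on the oriented interval, declaring all crossings flat, and joining the destination of $a$ to the start of $a^{-1}$ by the canonical arc transversal to the interval at those two adjacent feet. The resulting single chord runs from the start of $a$ to the destination of $a^{-1}$ and, traversed in order, first makes one counterclockwise loop about the leg $l$ (the $a$ part) and then one clockwise loop about $l$ (the $a^{-1}$ part), the two loops connected by a short arc parallel to the interval.

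Next I would cancel these two loops explicitly. Reading the chord as a path in the twice-punctured sphere $S^2\setminus\{l,h\}$, the $a$ part contributes winding $+1$ about $l$ and the $a^{-1}$ part contributes $-1$, so the total winding about $l$ is $0$; on the sphere the winding about $h$ is the negative of the winding about $l$, so it is $0$ as well. Because every crossing is flat and the arcs are unknotted, the concatenation of a $+1$ loop and a $-1$ loop about the same puncture is null-homotopic and this null-homotopy is realized within the allowed equivalence: I would slide the clockwise loop along the connecting arc until it is adjacent to the counterclockwise loop, creating a bigon that straddles $l$, and then remove it by a flat Reidemeister~II move. What is left is a chord winding around neither $l$ nor $h$, which is by definition the trivial diagram $1=1_u=1_o$. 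Running the same argument with the head $h$ in place of the leg $l$ gives $b\cdot b^{-1}=1$.

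The step I expect to require the most care---though it is an execution issue rather than a conceptual one---is certifying that the contraction stays inside the declared equivalence: the two feet of the chord must remain transversal points on the interval throughout, the sliding of the clockwise loop must not drag the arc across either puncture $l$ or $h$ (which would change the winding rather than leave it $0$), and the bigon removal must be a genuine flat Reidemeister~II move. Since the connecting arc produced by the multiplication meets the interval only at its two feet and encloses nothing else, there is room to bring the two loops adjacent before annihilating them, so the homotopy $a a^{-1}\simeq e$ in $\pi_1(S^2\setminus\{l,h\})$ is realized by flat isotopy together with an orientation-preserving reparametrization of the interval, as required.
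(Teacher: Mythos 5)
Your proposal is correct and follows essentially the same route as the paper, which simply observes that, by the definition of the multiplication, $a\cdot a^{-1}$ is flat isotopic to the trivial chord diagram (the paper points to a figure where you spell out the isotopy in words: the clockwise loop retraces the counterclockwise one, so the concatenation retracts within $S^2\setminus\{l,h\}$). One phrasing to fix: the cancelling bigons in the flat Reidemeister~II moves lie in the thin strip between the two parallel strands of the retracting tongue and must \emph{not} contain $l$ --- a bigon with $l$ in its interior could only be removed by sweeping a strand across the puncture, i.e.\ by a forbidden move --- so ``a bigon that straddles $l$'' should read ``bigons disjoint from $l$,'' which is what your own caveat about not dragging the arc across the punctures already guarantees.
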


\begin{proof}
The statement is an immediate consequence of the definition of the multiplication  with $x=a$ and $y=a^{-1}$, as illustrated in Fig.~\ref{fig:a*a^-1}. It is clear that $a \cdot a^{-1}$ is flat isotopic to the trivial chord diagram.

\begin{figure}[H]

 \centering
 \includegraphics[height=4.5cm]{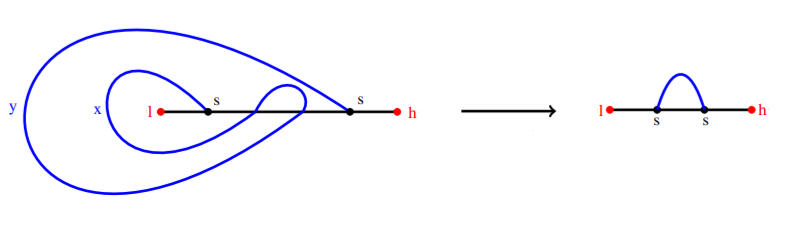}
 
  \caption{$a\cdot a^{-1}=1$}
  \label{fig:a*a^-1}
\end{figure}

\end{proof}

\begin{lemma} \label{lem:a*b}

$a\cdot b = 1 \quad b\cdot a=1$ in $S^2$.

\end{lemma}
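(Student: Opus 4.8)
The plan is to reduce both equalities to Lemma~\ref{lem:a*a} by first proving the single relation $a = b^{-1}$ (equivalently $b = a^{-1}$) in $S^2$. This is exactly where the hypothesis ``in $S^2$'' is indispensable: on the sphere the complement of the two endpoints $h,l$ is an annulus, and a small loop encircling the leg counterclockwise may be pushed across the ``back'' of the sphere — the region that plays the role of the unbounded outer region in the planar picture — until it becomes a small loop encircling the head. Under this slide the apparent orientation reverses, so that a counterclockwise loop about $l$ becomes a \emph{clockwise} loop about $h$, in exactly the same way that a large counterclockwise circle about the origin in $\R^2$ is read as a clockwise circle about the point at infinity under stereographic projection. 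The analogous move is unavailable in the plane, which is why $A_1^l$ will ultimately be infinite there but collapses here.

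First I would establish this at the level of a single chord. Take the diagram $a$: two marked points on the interval joined by an immersed arc winding once counterclockwise around $l$, with all crossings flat. Holding the two endpoints fixed on the interval, I would carry the arc over the sphere as above. Because there is no outer region to obstruct the passage and no over/under information to preserve (every crossing is declared flat), this is a genuine isotopy of immersed arcs fixing the ends, and it transports the ``winds once counterclockwise about $l$'' arc to a ``winds once clockwise about $h$'' arc. By definition the latter diagram is $b^{-1}$, so $a = b^{-1}$; the mirror-symmetric slide gives $b = a^{-1}$.

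With the relation in hand the two identities follow directly from the definition of the multiplication together with Lemma~\ref{lem:a*a}:
\[
a\cdot b \;=\; a\cdot a^{-1} \;=\; 1, \qquad b\cdot a \;=\; b\cdot b^{-1} \;=\; 1,
\]
where in the first chain I substitute $b = a^{-1}$ and in the second $a = b^{-1}$, and the final equalities are the flat isotopies of Lemma~\ref{lem:a*a}. An accompanying figure, tracing the arc as it is dragged across the sphere from the leg to the head, would make the cancellation transparent.

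The step I expect to be the main obstacle is the rigorous justification of the sliding isotopy. One must verify that the arc can be swept over the sphere without ever dragging it through either endpoint and without running afoul of the forbidden moves $\Omega_\pm$ (which, by Definition~\ref{defn:rvi}, persist in the singular theory), and one must track the orientation bookkeeping carefully enough to be sure the slide produces the inverse generator $b^{-1}$ rather than $b$ itself. A clean way to package this, and a useful consistency check, is to regard the chord together with the sub-interval between its endpoints as a based loop in the annulus $S^2\setminus\{h,l\}$, whose fundamental group is infinite cyclic; the classes ``counterclockwise about $l$'' and ``counterclockwise about $h$'' are the two mutually inverse generators of that group, so their composite is nullhomotopic, which is the homotopy-level shadow of the explicit flat isotopy carried out above.
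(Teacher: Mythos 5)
Your proof is correct, and its essential geometric ingredient --- that an arc may be swept across the region at infinity of $S^2$, which is precisely the move unavailable in $\R^2$ --- is the same one the paper relies on; the difference is architectural. The paper argues directly on the product: it forms $a\cdot b$ according to the multiplication rule and then exhibits, in Fig.~\ref{fig:a*b}, a flat isotopy in $S^2$ trivializing the resulting single chord in one pictorial step. You instead isolate the single-chord relation $a=b^{-1}$ (equivalently $b=a^{-1}$), obtained by applying the sphere slide to one generator, and then reduce both identities to Lemma~\ref{lem:a*a} by substitution, implicitly invoking the well-definedness of the multiplication on flat-isotopy classes (which the paper states just before Lemma~\ref{lem:a*a}, so this is legitimate). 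Your decomposition buys two things the paper leaves implicit: the explicit relation $b=a^{-1}$, which is what the Corollary following the lemma actually uses when it discards $b$ and keeps only the generator $a$; and a rigorous substitute for the figure, namely the observation that filling in the point at infinity imposes the relation $\alpha\beta=1$ on the free group $\pi_1(\R^2\setminus\{l,h\})$, so the counterclockwise loop about $l$ and the counterclockwise loop about $h$ become mutually inverse in the annulus $S^2\setminus\{l,h\}$ --- exactly the computation that fails in the plane, where $A_1^l$ stays free on two generators, as the paper notes after the Proposition. Your attention to the forbidden moves is also sound: the slide is a homotopy in the complement of $\{l,h\}$, so neither endpoint is ever crossed, and since all intersections of a chord with the interval and with itself are flat, the sweep is a valid equivalence of chord diagrams. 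In short, the paper's route is shorter and purely pictorial; yours is slightly longer but makes the group-theoretic mechanism visible and reuses the earlier lemma instead of a new figure.
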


\begin{proof}
Now the idea is this. Take $x=a$ and $y=b$. Connect the endpoints as the definition suggests such that both arcs cross the interval transversely. Then Fig.~\ref{fig:a*b} shows that in $S^2$ $a \cdot b$ is trivial.

\begin{figure}[H]

 \centering
\includegraphics[height=4.5cm]{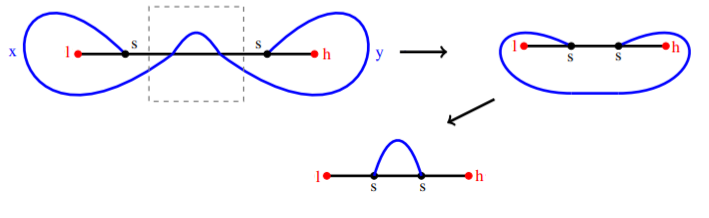}
  \caption{$a\cdot b = 1$}
  \label{fig:a*b}
\end{figure}

\end{proof}

\begin{cor}
From the above it follows that the set $A_{1}^l$ of spherical linear chord diagrams equipped with the multiplication forms a group. Lemma~\ref{lem:a*b} leaves us with only one generator in our construction, say $a$. So we can assume that every spherical chord diagram  has its head in the outer region.
We make this reference for future use, because it is fundamental and may well apply to further work on Vassiliev invariants for knotoids, when the weight systems are more complex.
\end{cor}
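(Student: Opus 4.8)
\emph{Proof proposal.} The plan is to verify the group axioms one at a time, leaning on the lemmas already proven, and then to use Lemma~\ref{lem:a*b} to cut the generating set down to a single element, reinterpreting that reduction geometrically as the claimed normalization. Associativity of the multiplication and the existence of a two-sided identity $1 = 1_u = 1_o$ were already recorded in the discussion preceding Lemma~\ref{lem:a*a}, so the only axiom requiring genuine work is the existence of an inverse for \emph{every} element; for that it suffices to show that each diagram in $A_1^l$ is a power of the single generator $a$.

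To establish this I would pass to the homotopy-theoretic description of a chord. A linear chord diagram of order one carries a single arc joining two points of the interval inside the punctured sphere $S^2 \setminus \{h,l\}$, which is an annulus with $\pi_1 \cong \Z$. Up to the equivalences allowed in Definition~\ref{defn:lcd} (flat crossings, together with orientation-preserving diffeomorphisms of the interval), such an arc is determined rel endpoints by its homotopy class, and since the annulus is aspherical the set of these classes is a $\Z$-torsor; hence each diagram is labelled by a winding number $w \in \Z$, which is exactly the abstraction of Fig.~\ref{fig:alcd}. The generator $a$ carries winding number $+1$ about the leg. Because the multiplication $x\cdot y$ is by construction the concatenation of the arc of $x$ with the arc of $y$ through a short connecting arc that crosses the interval transversally and encircles neither puncture, winding numbers add, so $\mathrm{wn}(x\cdot y)=\mathrm{wn}(x)+\mathrm{wn}(y)$. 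Consequently $w \mapsto a^{w}$ realizes every value, the assignment $A_1^l \to (\Z,+)$ sending a diagram to its winding number is a bijective homomorphism, and $A_1^l$ is a group isomorphic to $\Z$; in particular the inverse of $a^{n}$ is $a^{-n}$.

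Finally, Lemma~\ref{lem:a*b} gives $a\cdot b = b\cdot a = 1$ in $S^2$, that is $b=a^{-1}$, so the four naive generators $a,b,a^{-1},b^{-1}$ collapse to the single generator $a$, yielding the cyclic structure. The normalization statement is the geometric shadow of this identity: winding once around the head is the same, on the sphere, as winding once around the leg in the opposite sense, so any diagram can be rewritten as a power of $a$, which winds only about the leg. Placing the head at the point $\infty$ (the outer region of $\R^2$), we may therefore always assume the chord winds only around the finite leg and leaves the head unwound. I expect the one delicate point to be the additivity claim: one must check carefully that the prescribed connecting arc contributes zero winding and that the orientations match, so that concatenation genuinely adds winding numbers. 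Granting this, together with the completeness of the winding-number label (which comes from the annulus being a $K(\Z,1)$, and which will reappear in Theorem~\ref{thrm:class}), the remaining assertions are formal.
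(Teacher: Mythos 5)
Your proposal is correct, but it reaches the conclusion by a genuinely different route than the paper does. The paper's argument is diagrammatic: associativity and the two-sided identity $1=1_u=1_o$ come from the discussion preceding Lemma~\ref{lem:a*a}, inverses for the generators come from Lemma~\ref{lem:a*a}, and Lemma~\ref{lem:a*b} (proved by an explicit flat isotopy on $S^2$) supplies the relation $b=a^{-1}$; the fact that $a$ and $b$ actually generate all of $A_1^l$ is left implicit as geometrically evident, and the identification of $A_1^l$ with the infinite cyclic group is only drawn afterwards, in the Proposition that follows the Corollary. You instead prove the classification first: a chord is a path in the annulus $S^2\setminus\{h,l\}$ considered up to homotopy rel endpoints (legitimate here because all crossings are declared flat, so the allowed isotopies amount to homotopy), these classes are labelled by $\Z$ once the trivial diagram is taken as basepoint, and the multiplication adds winding numbers since the connecting arc encircles neither puncture; the group structure and the inverse axiom then fall out of the bijective monoid homomorphism onto $(\Z,+)$, with Lemma~\ref{lem:a*b} used only to collapse $b$ to $a^{-1}$ and to justify the normalization placing the head in the outer region. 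What your route buys is rigor at exactly the point where the paper is weakest, namely the unproved claim that $a$ and $b$ generate $A_1^l$, and it delivers the subsequent Proposition ($A_1^l\cong\Z$, hence torsion-free) for free, making Lemma~\ref{lem:a*a} logically redundant; what the paper's route buys is elementarity, since everything is read off from two pictures with no appeal to $\pi_1$. One small correction: the torsor property of homotopy classes of paths between two fixed points requires only path-connectedness of the ambient space, not asphericity; asphericity of the annulus is not the operative hypothesis anywhere in your argument, so you should either drop that phrase or replace it by the statement $\pi_1(S^2\setminus\{h,l\})\cong\Z$, which is all you use.
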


From the above, it is clear that we may rethink of a linear chord diagram with one chord, representing a singular knotoid with exactly one singularity, as a closed interval with a loop winding (only) around the leg $w$ times. The integer $w$ is the {\it winding number} of the loop. So, the two points $s$ can collapse into a single point and the loop can be abstracted to a  circle with the indication $w$.  See Fig.~\ref{fig:alcd}.

Furthermore, there is clearly no torsion in the cyclic group $A_{1}^l$. So, from the above we have proven the following:

\begin{prop} 
In $S^2$ for any linear chord diagram $C \in A_{1}^l$ there exists some $w \in \Z$ such that $C=a^w$. As a group, $A_{1}^l$ is infinite cyclic.
\end{prop}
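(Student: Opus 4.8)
The plan is to realize the whole group $A_1^l$ concretely as $H_1$ of the annulus obtained by deleting the two endpoints from the sphere, and to exhibit an explicit isomorphism to $\Z$ given by the winding number. Writing $\mathbb{A} = S^2 \setminus \{l,h\}$, I would first record that $\mathbb{A}$ is an open annulus, so $\pi_1(\mathbb{A}) \cong H_1(\mathbb{A}) \cong \Z$, with a generator given by a small loop encircling the leg $l$; the loop encircling the head $h$ represents the inverse class, which is exactly the homological shadow of Lemma~\ref{lem:a*b} ($b=a^{-1}$).

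To each order-one diagram $C$ I would attach a winding number $w(C) \in \Z$ as follows: orient the chord from its foot nearer the leg to its foot nearer the head, close it up along the sub-arc of the interval joining its two feet to obtain a loop in $\mathbb{A}$, and set $w(C)$ to be the class of this loop in $H_1(\mathbb{A}) \cong \Z$. I would then check that $w$ is well defined, i.e. invariant under the two equivalences permitted in Definition~\ref{defn:lcd}: an isotopy of the immersed arc fixing its feet changes the closing loop only by a homotopy in $\mathbb{A}$, and an orientation-preserving diffeomorphism of the interval drags the feet along the interval without crossing $l$ or $h$, again a homotopy. Since all crossings are flat, self-intersections of the chord carry no data and do not obstruct this, so $w$ descends to $A_1^l$. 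Additivity $w(x\cdot y)=w(x)+w(y)$ would follow directly from the concatenation definition of the product, since the two closing loops compose to the closing loop of $x\cdot y$ up to homotopy; thus $w$ is a homomorphism, and $w(a)=1$ by construction, so $w$ is surjective.

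It remains to see that $w$ is injective, which simultaneously yields both assertions. The preceding corollary together with Lemma~\ref{lem:a*b} already shows that $A_1^l$ is generated by $a$, so every $C$ equals $a^{m}$ for some $m\in\Z$; applying $w$ forces $m=w(C)$, whence $C=a^{w(C)}$ and distinct powers of $a$ are separated by $w$. Equivalently, and more self-containedly, I would show directly that $\ker w=\{1\}$: a chord whose closing loop is null-homotopic in $\mathbb{A}$ can be pushed off both punctures into a disc, and inside a disc an unknotted flat arc retracts to a trivial chord lying entirely above (or below) the interval, which is the identity $1=1_o=1_u$. Either way $w$ is an isomorphism, so $A_1^l\cong\Z$ is infinite cyclic and has no torsion.

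The main obstacle is the injectivity step, and specifically the honest topological claim hidden in it: that the winding number is a \emph{complete} invariant of order-one chords, not merely an invariant. Proving $\ker w=\{1\}$ requires classifying flat immersed arcs in the annulus up to the allowed isotopy — that a loop of winding number zero can actually be contracted to the trivial chord through flat diagrams. I would handle this by first reducing self-crossings using flatness and then invoking that a null-homotopic arc in the annulus bounds a disc disjoint from the punctures. By contrast, the surjectivity and the homomorphism property are routine once $w$ is set up, and the ``no torsion'' half of the statement is immediate from the mere existence of a homomorphism $w$ with $w(a)=1$.
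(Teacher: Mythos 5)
Your proof is correct, but it is organized quite differently from the paper's own argument, which is short and largely pictorial: Lemmas~\ref{lem:a*a} and~\ref{lem:a*b} establish $a\cdot a^{-1}=1$ and $a\cdot b=1$ by flat isotopy of figures, the corollary that follows asserts that $a$ alone generates $A_1^l$, the chord is then ``abstracted'' to a loop around the leg with a winding number, and torsion-freeness is declared to be clear, at which point the proposition is taken as proven. The two points you isolate as the real content --- that every order-one chord equals some $a^w$ (completeness of the winding number) and that distinct powers of $a$ are distinct (no torsion) --- are exactly the points the paper asserts rather than argues. Your explicit homomorphism $w\colon A_1^l\to H_1\bigl(S^2\setminus\{l,h\}\bigr)\cong\Z$, checked to be well defined on equivalence classes, additive for the concatenation product (the connecting arc in the definition of $x\cdot y$ lies in a neighbourhood of the interval, hence is homotopic to it, so the closing loops compose correctly), surjective since $w(a)=1$, and injective via the classification of paths rel endpoints in the annulus, supplies both missing justifications simultaneously: injectivity gives $C=a^{w(C)}$ and rules out torsion, so your self-contained Route B does not even rely on the corollary's generation claim, only on the group structure. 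What the paper's route buys is brevity and the computational rules (in particular $b=a^{-1}$) that are reused later to place heads in the outer region; what your route buys is an honest proof of the classification step, at the cost of invoking the standard fact that two paths in the annulus with common endpoints are homotopic rel endpoints exactly when their difference loop is null-homotopic. One caveat applies to both arguments: the ``isotopy of the immersed arcs'' in Definition~\ref{defn:lcd} must be interpreted as homotopy, since flat crossings carry no over/under data and arcs may pass through themselves and through the interval; you flag this explicitly, whereas the paper assumes it tacitly (its own discussion of homotopy classes in the annulus confirms this reading).
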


Note that in $\R^2$ this technique wouldn't work, since, for example, the second diagram in Fig.~\ref{fig:a*b} is locked and non-isotopic to the third one. So, if one tries to extend this theory to $\R^2$ or a surface $\Sigma$, one would have to deal with more complexities.

\subsection{From a singular knotoid  in \texorpdfstring{$S^2$}{} to its chord diagram}

Given a singular spherical knotoid diagram with one or more singular crossings we will correspond to it unambiguously a linear chord diagram.  

If we allowed to join two points without winding around $l$ or $h$ we would have the same chord diagram for non singular equivalent knotoids. For example, the knotoids shown in Fig.~\ref{fig:nonisoknotoids} are clearly non singular equivalent, yet, with the above convention they would both correspond to the trivial chord diagram (illustrated in the second row of Fig.~\ref{fig:a*b}).

\begin{figure}[H]
 \centering
  \includegraphics[height=3cm]{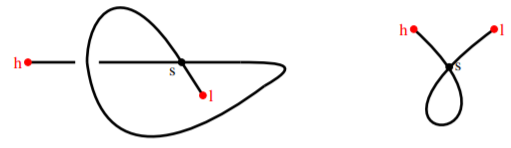}
  \caption{Two non singular equivalent knotoid diagrams}
  \label{fig:nonisoknotoids}
\end{figure}

The first observation to address this problem is that a knotoid diagram which winds once around the leg counter-clockwise from the first time we traverse the singular crossing until the second time, should correspond to a chord diagram winding once around the leg in the  counter-clockwise sense, as in Fig.~\ref{fig:lcd1}

\begin{figure}[H]
 \centering
 \includegraphics[height=3cm]{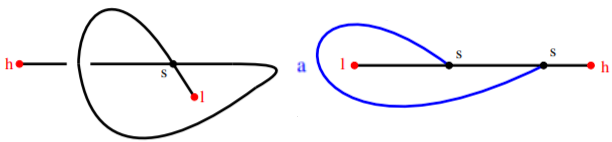}
  \caption{A knotoid diagram with its corresponding chord diagram}
  \label{fig:lcd1}
\end{figure}

We also note that there is no point in considering paths that wind around the head since we know that this would correspond to a path winding around the leg the same amount of times but with reversed orientations.

With these observations in mind, we propose the following algorithm for corresponding a linear chord diagram to a singular spherical knotoid diagram $K$ of order 1. Before doing that, we give the definition of the singular loop.

\begin{defn}\label{defn:sing_loop}\rm
The {\it singular loop} of a singular knotoid diagram (resp. of a singular knotoid) with one singular crossing is the closed loop formed in the diagram (resp. the knotoid) starting and ending at the singular crossing.

\end{defn}
\smallbreak

\noindent {\bf Algorithm for order 1 singular spherical knotoid diagrams}
\begin{itemize}
    \item  Split $K$ into $3$ paths: The path joining the leg to the singular point ($l \rightarrow s$), the \it singular loop ($s \rightarrow s$), and the path joining the singular loop  with the head ($s \rightarrow h$). See Fig.~\ref{fig:LCD_CONSTR} top left.
   \item   Erase the paths $l \rightarrow s$ and $s \rightarrow h$, but keep track of the points $l,h$. See Fig.~\ref{fig:LCD_CONSTR} top right.
    \item  What remains is a loop that winds a number of times around $l$. This integer is by convention positive if the winding is counter-clockwise and negative otherwise. 
     \item   We now construct a linear chord diagram of class $a^w$ with $w$ being the winding number we just obtained. One can compute directly $w$ by the usual isomorphism $\pi_1 (D^2 \setminus \{l ,s\} ) \cong \Z$ and so $w \in \Z$ corresponds to the class of $\gamma (s) \in \pi_1 (D^2 \setminus \{l ,s\} )$. See Fig.~\ref{fig:LCD_CONSTR} bottom.
\end{itemize}

\begin{figure}[H]

 \centering

\includegraphics[height=9cm]{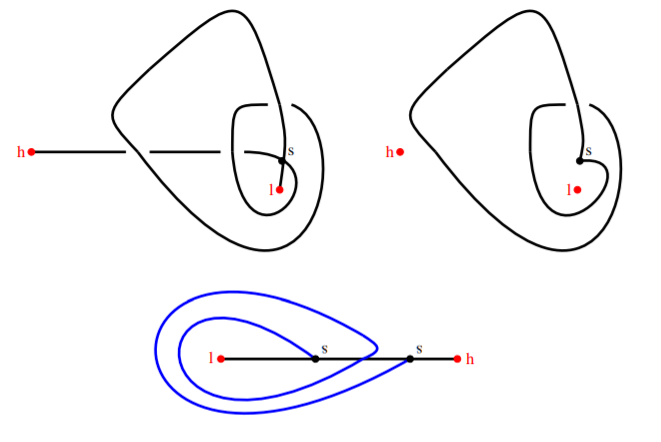}
  \caption{The singular loop and the corresponding chord diagram}
  \label{fig:LCD_CONSTR}
\end{figure}

The above generalize easily to an algorithm for corresponding a linear chord diagram to a singular spherical knotoid diagram $K$ of higher order:

\smallbreak

%\noindent {\bf Algorithm for order $> 1$  singular spherical knotoid diagrams}
%\begin{itemize}
    %\item Let $K$ be a singular knotoid diagram with $n$ singular crossings, $s_1,s_2, \dots s_n$, given in the order that one meets them for the first time as one traverses $K$ from leg to head.
    %\item  Split $K$ into $3$ paths:  The path joining the leg to $s_1$ ($l \rightarrow s_1$), the {\it singular path} starting from the first passage of $s_1$ and ending at the last encounter of a singular crossing, say $s_j$, ($s_1 \rightarrow s_j$), and the path joining $s_j$  with the head ($s_j \rightarrow h$). Note that, as one traverses $K$ from leg to head, one meets every $s_i$ twice.  Supposing that $s_n$ is the last one to be met at the first pass, yet we do not know which one is met last at the second pass. This explains the  term $s_j$.
   %\item   As before, erase the paths $l \rightarrow s_1$ and $s_j \rightarrow h$, but keep track of the points $l,h$. 
    %\item We now construct a linear chord diagram by first indicating  on the interval from $l$ to $h$  all encounters of singular crossings in the sequence one meets them while traversing $K$.
    %\item  The singular path contains $n$ singular loops, one for each singular crossing, and each one winding a number of times around $l$. This information is carried on the interval by pairing the singular points with same index and by joining them with a corresponding winding arc that starts from the leftmost encounter of the singular indication on the interval.  
%\end{itemize}

\begin{rem} \rm
Clearly, the above algorithm does not change under rigid vertex isotopy moves and under crossing switches. So, the linear chord diagram obtained by a singular knotoid diagram represents its whole singular equivalence class.  
\end{rem}

\subsection{From a linear chord diagram to a singular knotoid in \texorpdfstring{$S^2$}{.}}

It is a natural question to ask whether a linear chord diagram corresponds bijectively to a singular equivalence class. This question will only be answered in full (and in positive) in the next section for linear chord diagrams of order 1.

Here we will describe an algorithm for obtaining a singular spherical knotoid diagram from a linear chord diagram. 
 We will follow the classical construction of surgery along the two ends of a chord. 
The idea behind the technique is that a chord diagram illustrates the nodal structure of the singularities.  Furthermore, especially in pure knotoids the chords illustrate  the winding numbers of the singular loops around the leg.

\smallbreak

\noindent {\bf Algorithm for order 1 linear chord diagrams}

\begin{itemize}
    \item We start with a linear chord diagram with one chord, say $c$,  which has ends $a,b$ with $a$ being closer to $l$ and $b$ closer to $h$. See top left illustration in Fig.~\ref{fig:srg}.
    
    \item Take now a flat thickening of $c$: $c \times [-\e,\e]$  
    
    \item Cut out from the initial  interval the two small open sub-intervals $(a-\e,a+\e),(b-\e,b+\e)$. See top right illustration in Fig.~\ref{fig:srg}.
    
    \item The chord $c$  has now been duplicated into two new chords $c_-$ and $c_+$ comprising the remaining parts of the band's  boundary. See top right illustration in Fig.~\ref{fig:srg}.
    
    \item Introduce between the arcs $c_-$ and $c_+$  a singular crossing and at the same time a positive classical crossing, for retaining the connectivity pattern. See bottom left illustration in Fig.~\ref{fig:srg}. 
    
   \item  The result is an order one singular knotoid diagram, say $K$, which can be simplified by isotopy to the bottom right illustration in Fig.~\ref{fig:srg}. 
\end{itemize}

\begin{figure}[H]

 \centering
 
\includegraphics[height=9cm]{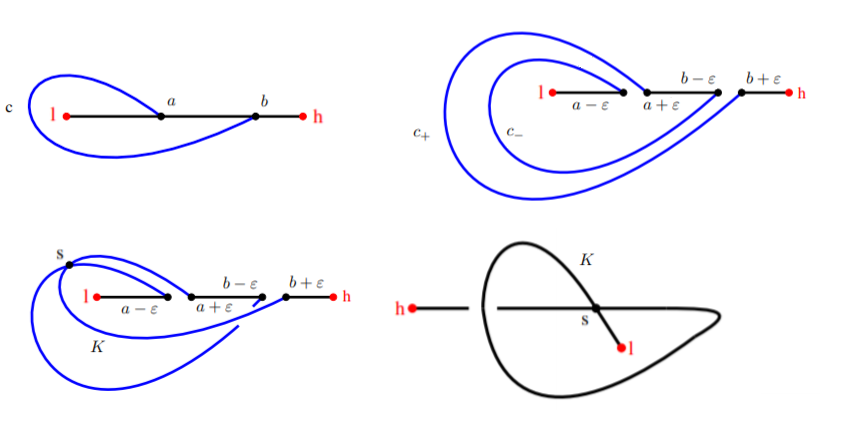}
  \caption{Recovering a singular knotoid from a chord diagram}
  \label{fig:srg}
\end{figure}

\begin{rem} \rm
Clearly, the resulting singular knotoid does not change under chord diagram equivalence (orientation preserving diffeomorphisms of the interval and isotopy of the immersed curve which fixes the endpoints).  Moreover, as it will become clear in the next section, the singular  knotoid obtained by a linear chord diagram is unique up to singular equivalence.
\end{rem}

\begin{rem} \rm
The above algorithms, the one for obtaining a linear chord diagram  out of a singular knotoid diagram  and the one for  obtaining a singular knotoid diagram out of a linear chord diagram, can be easily generalized for order greater than 1. Presenting them is beyond the scope of this paper, so they will be explained in a sequel work.
\end{rem}

%\noindent {\bf Algorithm for order $> 1$  linear chord diagrams}

%\begin{itemize}
  %  \item Consider now a linear chord diagram with  $n$ chords, say  $c_1,c_2, \dots c_n$, numbered from leg to head.
   % \item  Apply the above algorithm for the first chord $c_1$, declaring at the same time positive all crossings created between the resulting knotoid arc, say $K_1$, with the remaining chords. 
   %\item  Continue with each one of the rest of the chords, respecting the crossings with the already created knotoid arcs and  declaring  positive all crossings created between the new resulting knotoid arc,  with the remaining chords.
   %\end{itemize}

\section{Type-1 invariants, regular diagrams and weight systems for spherical knotoids} \label{s:toi}

Thinking of classical finite type invariants, the one-term relation applies on trivial chord diagrams, yielding zero evaluation of any type-1 invariant, as illustrated in Fig.~\ref{fig:OTR}. In the theory of knotoids, however, as we already discussed,  linear chord diagrams are not trivial.

\begin{figure}[H]
    \centering
    \includegraphics[height=2cm]{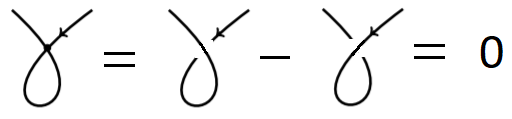}
    \caption{One-term relation}
    \label{fig:OTR}
\end{figure}
\subsection{The regular spherical knotoid diagrams for \texorpdfstring{$ v_1 $}{.}} \label{s:rkd}

Of crucial importance here is the notion of  regular diagram, which gives rise to our main result for any type-1 invariant, denoted generically $v_1$. 

\begin{defn}\rm\label{defn:descending}
A  knotoid diagram in a surface $\Sigma$ is \textit{descending} if when walking from leg to head every classical crossing encountered for the first time is an overcrossing. See also \cite{GUGUMCU2017186}. Similarly, a singular knotoid diagram in a surface $\Sigma$ is \textit{descending} if when walking from leg to head every segment traversed containing classical crossings is descending with respect to the classical crossings. 
\end{defn}

\begin{lemma}\label{lem:s_eq_desc}
Every singular knotoid diagram in a surface $\Sigma$ is singular equivalent to a descending singular knotoid diagram.
\end{lemma}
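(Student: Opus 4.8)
The plan is to exploit the fact that singular equivalence (Definition~\ref{defn:sequivalence}) permits real crossing switches, so that making a diagram descending reduces to nothing more than switching the offending classical crossings. First I would fix a singular knotoid diagram $K$ in $\Sigma$ and traverse it once from the leg $\gamma(0)$ to the head $\gamma(1)$. Since $K$ is the image of an immersion of $[0,1]$, this traversal induces a linear order on the two passages through each double point, both classical and singular. The singular crossings play no role in the descending condition of Definition~\ref{defn:descending}, being undeclared, so they merely subdivide the traversal into segments and may be disregarded when deciding over/under information.

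Next I would process the classical crossings according to this order. For each classical crossing $c$ I examine its two passages and determine which one occurs first in the traversal. If the first passage is the underpass, I switch $c$, so that the strand encountered first becomes the overpass; if the first passage is already the overpass, I leave $c$ untouched. Each such switch is a real crossing switch, hence is admissible under singular equivalence, and so the diagram $K'$ obtained after carrying out all the required switches satisfies $K \sim K'$. By construction the first passage through every classical crossing of $K'$ is an overpass, so that within each segment the classical crossings are descending, and $K'$ is descending in the sense of Definition~\ref{defn:descending}.

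The observation that makes this work in a single sweep is that a crossing switch is a purely local modification of the over/under data at one crossing: it alters neither the underlying $4$-valent graph of $K$ nor the order in which the passages are visited during the traversal. Consequently, switching one crossing never changes the first-encounter status of any other, and no iteration or re-ordering is needed. There is essentially no obstacle here; the only point requiring a little care is the bookkeeping around the singular crossings, namely verifying that they do not interfere with the traversal order used to define ``first encountered'' and that the descending condition is genuinely insensitive to them. This completes the proof of the lemma.
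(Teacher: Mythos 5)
Your proof is correct and takes exactly the same approach as the paper, which simply observes that singular equivalence permits real crossing switches and that appropriate switches make the diagram descending. Your version just makes explicit the bookkeeping (traversal order, locality of switches) that the paper leaves as "clear."
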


\begin{proof}
It it clear, since singular equivalence allows crossing switches. So,  applying appropriate crossing switches to the initial singular knotoid diagram, in can be deformed into a descending one.
\end{proof}

\begin{defn}\rm \label{defn:rkd}
A \textit{regular diagram} is a descending spherical knotoid diagram  with one singular crossing and with the minimal number of real crossings, among all diagrams in its rigid vertex isotopy class, and such that the singularity is located `next to'  the leg of the knotoid, in the sense that there exist no other crossings between the leg and the singular crossing. See Fig.~\ref{fig:rd}.
 %which  number is odd,
\end{defn}

Recall now Definition~\ref{defn:sequivalence} of singular equivalence. Our goal is to prove the following theorem.

\begin{thrm}\label{thrm:class}

Every spherical singular equivalence class of a knotoid diagram with one singularity contains exactly one regular diagram, and the regular diagrams are classified  by the winding number of the singular loop.

\end{thrm}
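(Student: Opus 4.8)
The plan is to decompose the statement into three pieces: (i) the winding number $w$ of the singular loop is a well-defined invariant of the singular equivalence class; (ii) each class contains at least one regular diagram (existence of a regular representative); and (iii) the regular diagram in a class is unique up to rigid vertex isotopy and is completely determined by $w$ (uniqueness and classification). I would first dispatch (i), since it is the cheapest and it immediately gives one direction of the classification. By the Remark following the reduction algorithm, the linear chord diagram assigned to a singular knotoid diagram is unchanged under rigid vertex isotopy moves and under real crossing switches, so it is constant on singular equivalence classes; combined with the Proposition identifying $A_{1}^{l} \cong \Z$ via $C = a^{w}$, this produces a well-defined map from singular equivalence classes to $\Z$. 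In particular two regular diagrams with different winding numbers cannot be singular equivalent, which already yields that distinct regular diagrams are classified by $w$.

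For existence, I would begin with an arbitrary singular knotoid diagram $K$ carrying a single singularity and invoke Lemma~\ref{lem:s_eq_desc} to replace it, within its singular equivalence class, by a descending diagram. Using the corollary to Lemma~\ref{lem:a*b} I may arrange the head to lie in the outer region, so that the relevant data is the winding of the singular loop around the leg $l$ inside the disc $D^{2} = S^{2} \setminus \{h\}$. The key is then to exploit the lifting of a descending diagram to a space curve: because the diagram descends from leg to head, its real crossings carry no essential knotting and the lift can be combed straight by rigid vertex isotopy (keeping the rigid singular vertex fixed and respecting the forbidden moves), until the only surviving real crossings are those forced by the singular loop winding $w$ times around $l$. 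Finally I would slide the singularity along the $l \rightarrow s$ arc until no crossing separates it from the leg. The resulting diagram is descending, has the singularity next to the leg, and realizes the minimal crossing number dictated by $w$; hence it is a regular diagram, as in Fig.~\ref{fig:rd}, representing the class of $K$.

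For uniqueness I would take two regular diagrams $R_{1}, R_{2}$ with the same winding number $w$ and show they are rigid vertex isotopic. Both are descending, both realize the minimal crossing number in their rigid vertex isotopy class, and both have the singularity adjacent to the leg; lifting each to a space curve yields an embedded arc with a single transversal double point whose singular loop represents $a^{w} \in \pi_{1}(D^{2} \setminus \{l,s\}) \cong \Z$. Since descendingness straightens the lift away from the prescribed winding, the two lifts are line-isotopic, and projecting back produces the desired rigid vertex isotopy between $R_{1}$ and $R_{2}$; together with the invariance in (i), this shows each singular equivalence class contains exactly one regular diagram. The step I expect to be the main obstacle is precisely this straightening argument, used in both existence and uniqueness: one must verify that the combination of descendingness, crossing minimality, and the next-to-leg condition genuinely pins the diagram down to a single rigid vertex isotopy class, carefully checking that the forbidden moves $\Omega_{\pm}$ never block the required isotopies near the endpoints and that no real crossings persist beyond those accounting for the winding $w$.
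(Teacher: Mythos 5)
Your proposal is correct and follows essentially the same route as the paper's proof: reduce within the singular equivalence class to a special descending diagram (Lemmas~\ref{lem:special} and~\ref{lem:s_eq_desc}), straighten the rail lifting segment-by-segment using descendingness to reach the canonical regular form, and classify by the winding number via $\pi_1(D^2 \setminus \{l\}) \cong \Z$. Your explicit three-part decomposition (invariance of $w$, existence, uniqueness) is an organizational refinement rather than a different argument; the same key lemmas and the same straightening step carry the weight in both versions.
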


With this in hand we will be ready to show that we can handle the complexity occurring by the forbidden moves and the rigidity of the singularity just by the winding number of the singular loop.

Following Turaev \cite{turaev2012knotoids}, the spherical knotoid diagrams which have the head in the region of the point at infinity and the leg  in the region of  the point $0$ are called \textit{special}. Clearly we have the lemma:

\begin{lemma} \label{lem:special}

Every  spherical knotoid diagram $K$ is isotopic to a spherical knotoid diagram $\tilde{K}$, such that there is a path joining the head of $\tilde{K}$ with the point at infinity $ \infty $ and does not intersect any other part of the knotoid, that is,   $\tilde{K}$ is special.

\end{lemma}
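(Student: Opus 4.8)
The plan is to realize the relocation of the head purely by an ambient isotopy of the sphere, exploiting the homogeneity of $S^2$; this sidesteps the forbidden moves, which are exactly what would otherwise prevent us from dragging the head across strands of $K$. Let $R$ denote the region of the complement $S^2 \setminus K$ that contains the head $\gamma(1)$, and fix an auxiliary point $q \in R$ together with an arc from $\gamma(1)$ to $q$ lying inside $R$ (hence disjoint from $K$). I would then reduce the whole problem to carrying the single point $q$ to $\infty$.

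The key step is this transport of $q$ to $\infty$. Since $S^2$ is homogeneous --- already the rotation group $SO(3)$ acts transitively on $S^2$ through isotopies of the identity --- there is an ambient isotopy $\Phi_t$ of $S^2$ with $\Phi_0 = \mathrm{id}$ and $\Phi_1(q) = \infty$. By the usual fragmentation of an ambient isotopy into moves supported in discs, $\Phi_1$ carries $K$ to an \emph{isotopic} spherical knotoid diagram $\tilde{K} := \Phi_1(K)$ (the over/under data is simply transported, so no crossing changes type and no forbidden move is performed). Now $\infty = \Phi_1(q) \in \Phi_1(R)$, while the head of $\tilde{K}$ is $\Phi_1(\gamma(1)) \in \Phi_1(R)$, and $\Phi_1(R)$ is a region of $S^2 \setminus \tilde{K}$ because $R$ was disjoint from $K$. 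As a region it is open and connected, hence path-connected, so a path inside $\Phi_1(R)$ joins the head of $\tilde{K}$ to $\infty$ without meeting $\tilde{K}$; this is precisely the required arc.

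To upgrade $\tilde{K}$ to a \emph{special} diagram in Turaev's sense I would finally place the leg in the region of $0$ by a second ambient isotopy, supported in a disc disjoint from the chosen head-to-$\infty$ arc (so that the head's region is left intact) and carrying a point of the leg's region to $0$; transitivity of $\mathrm{Homeo}(\R^2)$, extended by fixing $\infty$, makes this possible. I expect the only genuine subtlety --- and the reason the statement holds on $S^2$ but not on a surface of higher genus --- to be the appeal to homogeneity: one is not permitted to push the head across $K$, so the entire region containing the head must instead be transported by a global isotopy of the ambient sphere, which is available precisely because $S^2$ is homogeneous and its orientation-preserving homeomorphism group is connected. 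Care should also be taken that the second isotopy can indeed be chosen compactly supported away from a neighborhood of $\infty$ containing the head-to-$\infty$ arc.
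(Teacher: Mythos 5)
Your proposal is correct and follows essentially the same route as the paper: the paper's proof also picks a point $p \neq h$ in the head's region and invokes the rotational symmetry of $S^2$ (an isometry isotopic to the identity) to identify $p$ with $\infty$, then notes the same reasoning handles $l$ and $0$. Your write-up merely makes explicit the details the paper leaves implicit (the transitive $SO(3)$-action, fragmentation into disc isotopies, and keeping the second isotopy away from the head-to-$\infty$ arc).
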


\begin{proof}
Take a point $p\neq h$ in the connected component of $h$. By the fact that $S^2$ is rotationally symmetric, $p$ can be thought of as the point at infinity via an isometry.  Alternatively, one could also think that, given $h$ and $\infty$, we are not
allowed by the forbidden moves to pass branches of the knotoid $K$ through
$h$, but we can pass them through $\infty$ and, so, if in the initial position we
had $k$ intersection points when joining $h$, $\infty$, by passing $k$ branches to the other side of $\infty$ we have the desired special diagram only by using locally-planar isotopy. The same reasoning applies for $l$ and $0$.
\end{proof}

\begin{lemma} \label{lem:trsf}

The set of spherical knotoids $\mathcal{K}(S^2)$ is in bijection with the set of planar knotoids whose head lies in the non-bounded connected component of $\R^2$ minus the knotoid, which is viewed as  planar knotoid:

$$\mathcal{K}(S^2)\hookrightarrow K(\R^2) $$

\end{lemma}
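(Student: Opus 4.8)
The plan is to realize both the domain and the codomain of the desired bijection as isotopy classes of $(1,1)$-tangles in the annulus $A := S^2 \setminus \{l,h\}$, with the two ends lying on the two boundary circles and free to slide along them, and then to observe that the inclusion-induced map matches these two identifications. Concretely, let $\iota \colon K(\R^2) \to \mathcal{K}(S^2)$ be the map sending a planar knotoid to its class in $S^2$ under $\R^2 \hookrightarrow S^2$; this is well defined because every planar Reidemeister move and every forbidden-move restriction is also valid on the sphere. Writing $P \subset K(\R^2)$ for the set of planar knotoids whose head lies in the unbounded region, I would prove that $\iota$ restricts to a bijection $\iota|_P \colon P \to \mathcal{K}(S^2)$; its inverse is then the claimed embedding $\mathcal{K}(S^2) \hookrightarrow K(\R^2)$.

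Surjectivity of $\iota|_P$ is immediate from Lemma~\ref{lem:special}: given a spherical knotoid $k$, choose a special diagram $\tilde{K}$ whose head lies in the region of $\infty$ and delete $\infty$; the result is a planar diagram whose head lies in the unbounded region and whose $\iota$-image is $k$. For injectivity, which is the main step, I would use the annulus model recalled in Subsection~\ref{sub:knotoids}, which identifies $\mathcal{K}(S^2)$ with isotopy classes of $(1,1)$-tangles in $A$. The point is that $P$ admits the same description: if $D$ is a planar diagram with head in the unbounded region, there is an embedded arc from $h$ to infinity meeting $D$ only at $h$, so $h$ can be pushed to the point $\infty$; then $A = S^2\setminus\{l,h\} = \R^2\setminus\{l\}$ is an annulus whose outer end surrounds $\infty = h$ and whose inner end surrounds $l$, and $D$ becomes a tangle with one end on each boundary circle. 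Crucially, the forbidden moves are automatically encoded in this picture, since in $A$ no strand may cross a boundary circle, i.e.\ pass over an endpoint; hence planar isotopy of diagrams in $P$ corresponds exactly to annulus-tangle isotopy. Because including such a diagram into $S^2$ and reading off its annulus tangle yields precisely the same tangle, the two identifications are compatible with $\iota|_P$, so $\iota|_P$ is a bijection.

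The step I expect to require the most care is the faithfulness of the planar identification, namely that two diagrams in $P$ which are isotopic as annulus tangles are already isotopic as planar knotoids with head in the unbounded region: one must check that sliding the head along the outer boundary circle corresponds to moving $h$ within the unbounded planar region, and that an annulus isotopy never forces a strand across $\infty$ in a way unavailable in the plane. This is exactly where the hypothesis \emph{head in the unbounded region} is used: on $S^2$ the two punctures $l,h$ play symmetric roles and there is no preferred outer region, so a single spherical knotoid has many planar preimages, one per choice of outer region, and fixing the head's region to be the outer region removes precisely this ambiguity. As the closing remark of the excerpt indicates, this feature is special to $S^2$; over $\R^2$ or a higher-genus $\Sigma$ the analogous winding and locking phenomena obstruct such a clean identification.
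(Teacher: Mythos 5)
Your proposal is essentially correct at the paper's own level of rigor, but it reaches the conclusion by a genuinely different route for the harder half of the statement. The paper never works with the inclusion-induced map $\iota$; it argues purely at the diagram level: Lemma~\ref{lem:special} puts a spherical knotoid in special position, the stereographic projection $\pi\colon S^2\setminus\{\infty\}\to\R^2$ turns the special diagram into a planar diagram with head in the unbounded region, and the converse direction is handled by applying $\pi^{-1}$ to such a planar diagram. Your surjectivity step coincides with this (Lemma~\ref{lem:special} plus deletion of $\infty$), but for injectivity you instead invoke the $(1,1)$-tangle-in-the-annulus model of $\mathcal{K}(S^2)$ --- stated in Subsection~\ref{sub:knotoids} but never actually used by the paper in this proof --- and show that the subset $P$ admits the same annulus description. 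What your route buys is conceptual clarity: it isolates precisely where the hypothesis \emph{head in the unbounded region} enters (it selects the outer region, killing the one-preimage-per-region ambiguity), and it reduces the lemma to a single identifiable claim about annulus-tangle isotopy versus planar isotopy. What the paper's route buys is an explicit coordinate-level two-way construction. Be aware, however, that the step you flag as needing the most care is exactly the step the paper skips with ``the converse is analogous'': neither argument verifies that two diagrams in $P$ which become isotopic on $S^2$ (where strands may sweep across $\infty$) are already isotopic in $\R^2$, nor that membership in $P$ is well-posed for knotoids rather than diagrams (head-accessibility of a diagram can be destroyed by legitimate Reidemeister~II moves performed far from the endpoints, so $P$ should be read as ``admits a diagram with head in the unbounded region''). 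So your proposal is not weaker than the paper's proof, but both leave the same kernel of the lemma informal.
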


We could equivalently think of these special planar knotoids as "long knots in one direction", in the sense that: even though the leg is trapped, yet following the knotoid diagram we can approach the head via a straight line outside a compact set. The same effect is achieved by identifying the head of a spherical knotoid with the point at infinity.

\begin{proof} \rm

Let $K\in \mathcal{K}(S^2)$. From Lemma~\ref{lem:special} we can assume, without loss of generality, that $K$ is special. By the decompactification of $S^2$, using the usual stereographic projection:
$$\pi : S^2 \setminus \{ \infty \}\longrightarrow \R^2 $$
the initial diagram $K$ corresponds uniquely to a diagram   $\tilde{K}\in K(\R^2)$.  The  diagram $K$ except a neighbourhood of $h$, is in a compact subset $B$ which does not contain $\infty$.  Then we have that $\pi(B)$ is compact in $\R^2$ and $\pi(h) \in \pi(S^2 \setminus B)$ which is non-bounded and has no common point with the knotoid other than the small path near $\pi(h)$. Moreover, we can assume that outside the compact subset $\pi(B)$ the knotoid $\tilde{K}$ reaches the head with a straight line since, from our hypothesis, $h$ is in the same region as $\infty$. The converse is analogous, since we begin with a knotoid $\tilde{K}\in K(\R^2)$, which lies inside a compact set $F$ except a simple straight arc from the boundary of $F$ to $h$. Applying $\pi ^{-1}$ yields a special knotoid which is what we want.

\end{proof}

\begin{figure}[H]
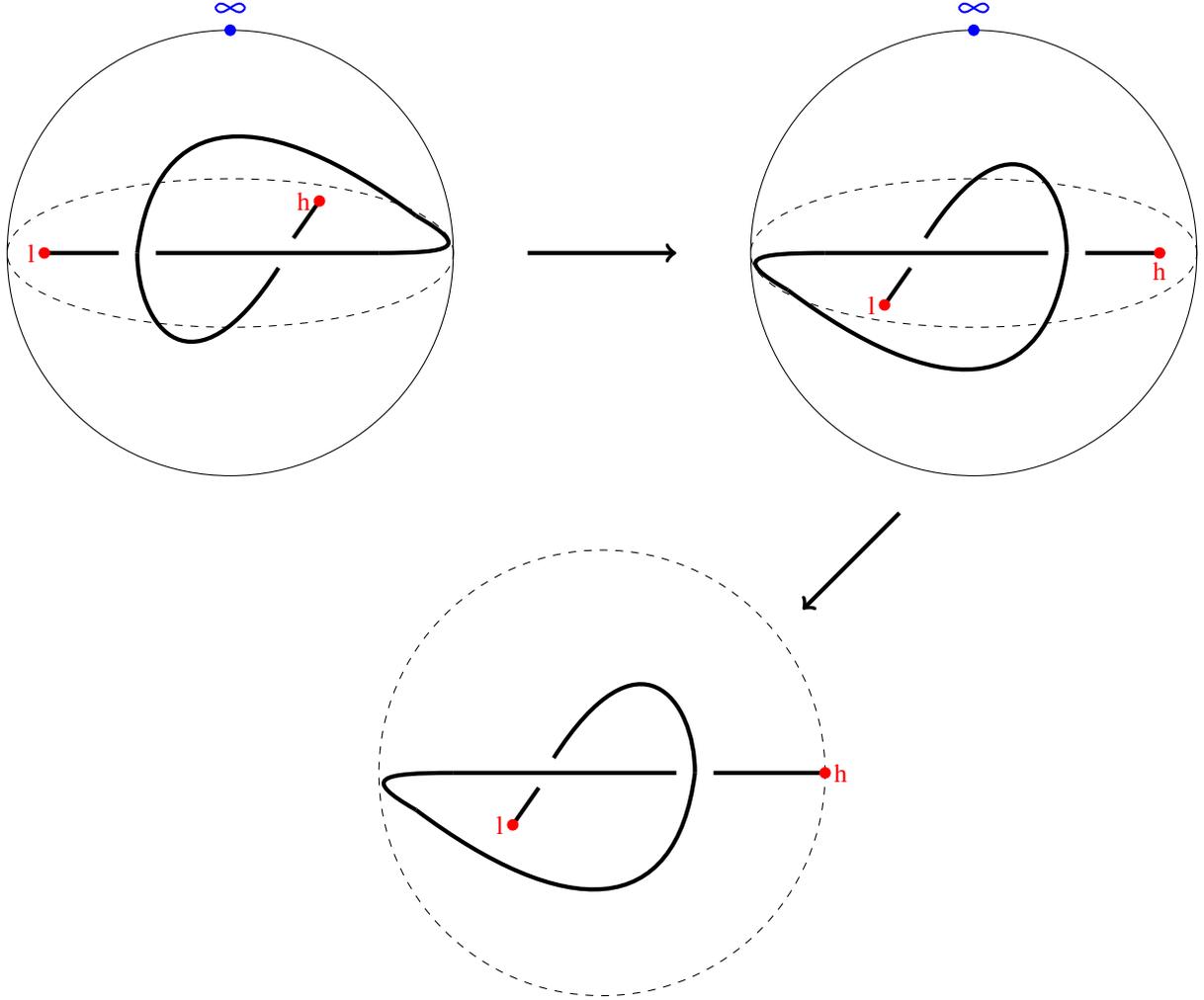

 
 \centering
 
 \trsf
  \caption{Transformation to a special knotoid diagram and then to a planar}
  \label{fig:trsf}
\end{figure}

\begin{lemma}\label{lem:desc}
Every descending  spherical knotoid diagram is isotopic to the trivial one.
\end{lemma}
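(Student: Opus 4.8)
The plan is to use the lifting of knotoid diagrams to space curves, which is exactly the feature of descending diagrams advertised in the Introduction. Given a descending spherical knotoid diagram $K$, I would first apply Lemma~\ref{lem:trsf} (together with Lemma~\ref{lem:special}) to replace it by an isotopic special diagram and then decompactify to a planar knotoid diagram whose head escapes to infinity along a straight ray. Since the passage to a special diagram uses only locally-planar isotopy and the stereographic projection $\pi$ does not alter any over/under datum, the resulting planar diagram is still descending; hence it suffices to prove the statement for a descending planar knotoid diagram and then invoke the Gügümcü--Kauffman bijection between planar knotoids and line-isotopy classes of open space curves attached to the two rails.

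Next I would produce a distinguished lift. Parametrizing $K$ as $\gamma\colon[0,1]\to\R^2$ with $\gamma(0)=l$ and $\gamma(1)=h$, set $\tilde K(t)=(\gamma(t),\,1-t)$, so that the height coordinate strictly decreases from leg to head. At a crossing $\gamma(t_1)=\gamma(t_2)$ with $t_1<t_2$ the first-visited strand sits strictly higher, i.e.\ it passes over the second one, which is precisely the descending condition. Thus $\tilde K$ is a faithful lift of $K$ (its planar projection is $\gamma$ with the correct over/under signs), and because $1-t$ is injective the lift has no self-intersections, so $\tilde K$ is an embedded arc with its ends on the two rails $\{l\}\times\R$ and $\{h\}\times\R$. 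This is the one place where the descending hypothesis is essential: it is exactly what makes the monotone-height lift consistent with the prescribed crossing information.

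Finally I would straighten the monotone lift. Since the height is a strictly monotone function of the parameter, $\tilde K$ meets each horizontal level $\{z=c\}$ with $c\in[0,1]$ in a single point, so $\tilde K$ is the graph $\{(f(z),z):z\in[0,1]\}$ of a map $f\colon[0,1]\to\R^2$ with $f(1)=l$ and $f(0)=h$. The level-preserving homotopy $f_s(z)=(1-s)f(z)+s\big((1-z)\,l+z\,h\big)$ keeps each level a single point, hence $\tilde K$ stays an embedded arc throughout while its endpoints move along the two rails; at $s=1$ one obtains the trivial straight arc. This is a line (rail) isotopy of $\tilde K$ to the trivial lift, so by the Gügümcü--Kauffman correspondence $K$ is isotopic to the trivial knotoid, as claimed.

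The genuinely delicate points here are not any single hard step but rather the bookkeeping: checking that the reduction of Lemma~\ref{lem:trsf} preserves the descending property and introduces no forbidden moves, and verifying that the straightening homotopy can be arranged to respect the rail constraint at the endpoints (sliding them along $\{l\}\times\R$ and $\{h\}\times\R$ rather than pulling them off the rails). Both are routine once the monotone lift is in place, since all the conceptual content lies in the observation that a descending diagram admits a strictly height-monotone embedded lift.
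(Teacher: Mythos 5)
Your reduction to a special planar diagram and the monotone lift $\tilde K(t)=(\gamma(t),1-t)$ are sound, and they match the spirit of the paper's proof (which also passes through Lemmas~\ref{lem:special} and~\ref{lem:trsf} and then uses the rail lifting). The genuine gap is in your final straightening step, and it sits exactly where the content of the lemma lies. A line (rail) isotopy is by definition an ambient isotopy of the \emph{complement} of the two rails, so no intermediate curve may meet $\{l\}\times\R$ or $\{h\}\times\R$ in its interior; in the planar picture, sweeping a strand across a rail is precisely a forbidden move $\Omega_{\pm}$. Your level-preserving interpolation $f_s(z)=(1-s)f(z)+s\,(\text{straight arc})$ violates this whenever the descending diagram winds around its own leg, which descending diagrams certainly can do (a crossingless spiral around $l$ is descending, as are the curves underlying the paper's regular diagrams). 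Concretely, the straight target satisfies $\mathrm{target}(z)-l=(1-z)(h-l)$, a fixed direction; if $f$ winds at least once around $l$, there is some $z<1$ with $f(z)-l$ antiparallel to $h-l$, and then $f_s(z)=l$ for $s=|f(z)-l|/\bigl(|f(z)-l|+(1-z)|h-l|\bigr)\in(0,1)$. So the lifted curve is pushed through the leg's rail: embeddedness of each level-graph $f_s$ (which you do get) is not the issue at all. (There is also a typo with consequences: since $f(1)=l$ and $f(0)=h$, the target must be $z\,l+(1-z)\,h$; as written, your homotopy drags both endpoints off their rails.)

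This winding around the leg is the one obstruction the lemma has to overcome, and the paper's proof devotes its key step to it: after lifting, the (monotone) curve $c$ together with the leg's rail $L$ forms a $2$-braid, whose word reduces by cancellation to $\sigma_1^n$, and the winding is then removed by a continuous rigid rotation of the head's rail $H$ around $L$ through $|n|$ half twists. That move is legal because a line isotopy is allowed to take lines to lines (equivalently, knotoid isotopy may displace the endpoints — it only forbids pulling strands across them). Your homotopy keeps both rails pointwise fixed and never moves the head, so it cannot remove the winding legitimately; it only ``succeeds'' by cutting across the rail of $l$. To repair the argument you must insert an unwinding step of this kind before the interpolation: rotate $H$ around $L$ until the projected curve $f$ becomes homotopic to the straight segment in $\R^2\setminus\{l\}$, and only then straighten (still checking that the straightening avoids $l$). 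As written, the proof fails already on the simplest descending diagrams with nonzero winding.
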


\begin{proof} \rm
Since in the isotopy class of any spherical knotoids there is at least one special representative, we choose, without loss of generality, $K$ to be a descending special spherical knotoid diagram. 
  Take a disc $D$ in $S^2$ centered at $\infty$ which intersects $K$  only at a single straight segment $a$ starting from $h$. Then, the disc $S^2 \setminus D$ centered at $l$, after possible application of some planar isotopy. This disc encloses the rest of $K$ except for the segment $a$. 

Applying the stereographic projection $\pi$  yields in the plane the knotoid $\tilde{K}$ of Lemma~\ref{lem:trsf}, in which $h$ lies outside a compact region enclosing all the rest of $\tilde{K}$ except for the small straight segment $\pi (a)$ ending at $h$. 

We can now take the rail lifting of $\tilde{K}$ in $\R^3$. This gives rise to a solid cylinder with central axis the rail of $l$, denoted by $L$. See Fig.~\ref{fig:desc_triv}. Denote also by $H$ the rail of $h$. $H$ is allowed to move rigidly in the boundary of the solid cylinder.

\begin{figure}[H]
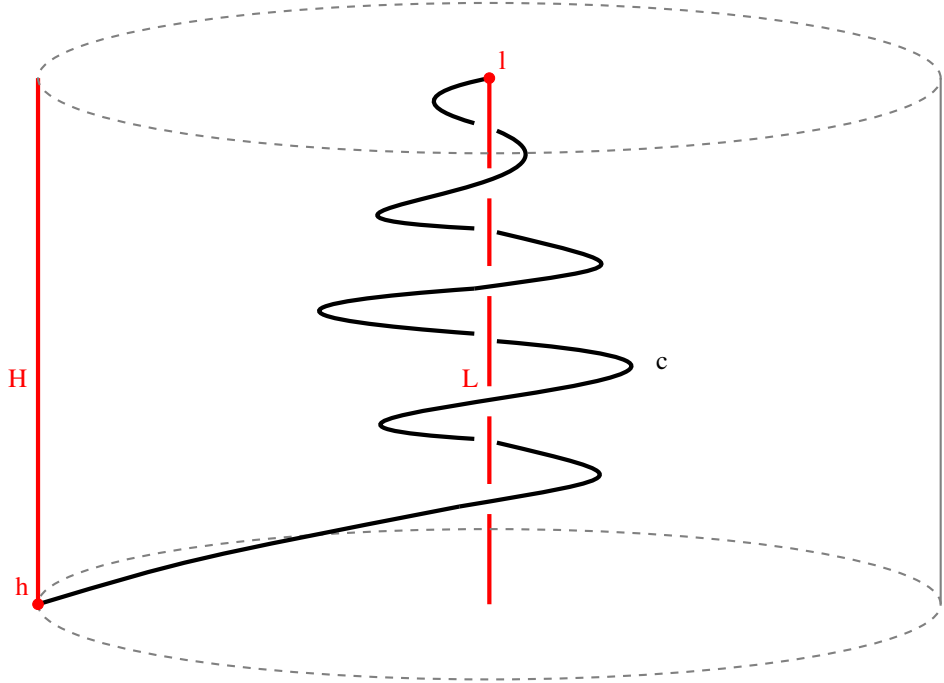

    \centering
    \RRKFL
    \caption{Rail lifting of a descending spherical knotoid }
    \label{fig:desc_triv}
\end{figure}

Now, our knotoid $\tilde{K}$ lifts to a curve $c$ in $\R^3$, parametrized by the unit interval, such that $c(0) = l$ and  $c(1) = h$.  The fact that  $\tilde{K}$ is descending means that if two points  $p_1=c(t_1), p_2=c(t_2)$ with $t_1<t_2$ and with $p_1=(x,y,z_1), p_2=(x,y,z_2)$, then $z_1>z_2$. Here we use the convention that $L,H$ are perpendicular to the $(x,y)$-plane, with $l$ fixed at the point $(0, 0, 1)$ on $L$  and $h$ lying in the $(x,y)$-plane at the lower end of the rail $H$. Hence, the curve $c$ is a helical curve that winds around the rail $L$ but does not wind around $H$, since $K$ is a special knotoid diagram. 

Now, the curve $c$ together with $L$, both oriented downward, form a $2$-braid. The parts that correspond to a braid word $\sigma_1\sigma_{1}^{-1}$ are cancelled by isotopy, so we have a braid word of the form $\sigma_1^n$, where $n\in \Z$. Now we can unwind $c$ from the bottom to the top by a continuous rotation of $H$ around $L$ in the direction opposite to the direction of the winding for $|n|$ half twists. Hence, the curve $c$ is line isotopic to a trivial arc and, so, the projection of $c$ on the $(x,y)$-plane is the trivial knotoid diagram.
\end{proof}

\begin{rem}\rm
Lemma~\ref{lem:desc} does not hold for planar knotoid diagrams. For example, the knotoid diagram of Fig.~\ref{fig:plan_desc} is not trivial.
\end{rem}

\begin{figure}[H]
    \centering
    \includegraphics[height=3cm]{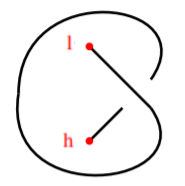}
    \caption{A non-trivial  descending planar knotoid diagram}
    \label{fig:plan_desc}
\end{figure}

\textbf{\textit{Proof of Theorem~\ref{thrm:class}}}

 Let $K$ be a singular spherical knotoid diagram with one singular crossing. By Lemma~\ref{lem:special}, $K$ can be considered to be special and, by Lemma~\ref{lem:s_eq_desc}, $K$ can be considered to be descending.
  Using, now, the injection of Lemma~\ref{lem:trsf}, we consider the rail lifting $c$ of $K$ in the solid cylinder  $C_R$  of radius $R$ centered at $L$. 
 
 Note that Lemma~\ref{lem:trsf} is valid also for spherical knotoid diagrams with one singular crossing where a neighbourhood of the knotoid diagram containing the singular crossing at the lifting can be considered to live in a flat disc, and in this point of view moves rigidly.
 
 The curve $c$ lives in the interior of $C_R \setminus L$, except $l$ and $h$, and the head $h$ can be considered to be situated in the line $H$, in the boundary of $C_R$.  $c$ comprises three segments: ($l \rightarrow s$) from $l$ to $s$,  the singular loop ($s \rightarrow s$), and ($s \rightarrow h$) from $s$ to $h$. 

Since $K$ is descending,  up to space isotopy, the segment ($l \rightarrow s$) can be viewed as a classical descending spherical  knotoid and, hence, by Lemma~\ref{lem:desc}, it is an unknotted arc that lies above the rest of $c$. This arc can be straightened by isotopy, employing the unwinding technique that we described earlier, that is, by rotating around the rail $L$. Using isotopy, one may ensure  that after the first encounter of the point $s$, the knotoid lies outside the cylinder centered at $L$ with radius $d(s,L)$. See Fig.~\ref{fig:rrkf}. 
Further on, again by the fact that $K$ is descending and using space isotopy, the singular loop ($s \rightarrow s$) can be viewed to wind around $L$ with decreasing radii. See Fig.~\ref{fig:rrkf}. Finally,  by the same arguments and employing the unwinding technique, the segment ($s \rightarrow h$) is also an unknotted  straight arc, descending from $s$ to $h$. See Fig.~\ref{fig:rrkf}.

\begin{figure}[H]
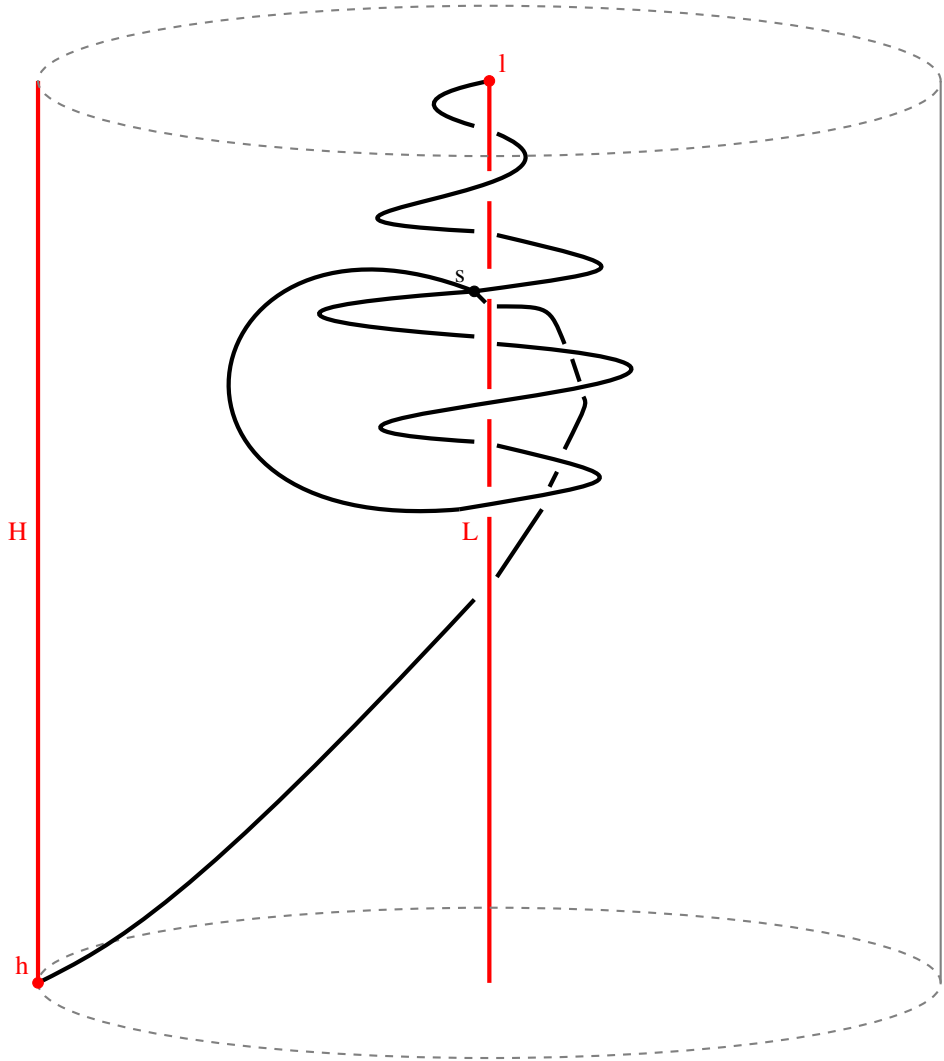

 \centering
 \RRKF
  \caption{Rail lifting of a spherical singular descending knotoid }
  \label{fig:rrkf}
\end{figure}

We now pass to the projection of the thickened annulus $C_R \setminus L$ in the $(x,y)$-plane.  $C_R \setminus L$ projects to the annulus $D^2 \setminus\{l\}$. Then the curve $c$ projects to a singular knotoid $\bar{K}$ comprising three segments analogous to the ones of $c$ and for which we shall use the same notation: ($l \rightarrow s$) from $l$ to $s$,  the singular loop ($s \rightarrow s$), and ($s \rightarrow h$) from $s$ to $h$. From the above, the subarc ($l \rightarrow s$) contains no crossings. The singular loop ($s \rightarrow s$) is descending and contains $w-1$  crossings, where $w$ is its winding number around $l$. Finally, the segment ($s \rightarrow h$) lies entirely under the rest of the diagram and creates $w$ crossings with the singular loop. 

Therefore, by Definition~\ref{defn:rkd} $\bar{K}$ is a regular diagram. Recall, for example, the top left diagram in Fig.~\ref{fig:LCD_CONSTR}. Moreover this minimal number of crossing is always odd as we see that such a diagram has always $2w-1$ real crossings.

 We finally observe that, by definition, two different regular diagrams correspond to two distinct singular loops. Now, a singular loop can be viewed as an element of  the fundamental group of the annulus $\pi_1 (D^2 \setminus\{l\}, s)$, which is $\Z$. Hence, the singular loops  and, consequently, the regular diagrams are classified by $\Z$. 

From the above, each singular equivalence class has exactly one distinct regular diagram as a representative. \qed

\begin{cor} \label{cor:bij}
Combining the results of Section~\ref{s:lcd} and Theorem~\ref{thrm:class} we have that the set $A_1^l$ of all spherical linear chord diagrams of order 1  corresponds  bijectively to the set of regular knotoid diagrams.
\end{cor}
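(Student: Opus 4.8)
The plan is to realize the claimed bijection as the composite of two identifications with $\Z$, using the winding number as the common invariant. From the side of chord diagrams, the Proposition of Section~\ref{s:lcd} shows that $A_1^l$ is infinite cyclic with generator $a$, so that the assignment $C = a^w \mapsto w$ is a bijection $A_1^l \to \Z$. From the side of diagrams, Theorem~\ref{thrm:class} shows that every singular equivalence class with one singularity contains exactly one regular diagram and that the regular diagrams are classified by the winding number $w \in \Z$ of the singular loop, giving a bijection from the set of regular diagrams to $\Z$. Composing the first identification with the inverse of the second then produces the asserted bijection.

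To make this more than a formal statement about two copies of $\Z$, I would verify that the two algorithms of Section~\ref{s:lcd} implement these identifications compatibly. Define $\Psi$ to send a regular diagram to the linear chord diagram produced by the ``singular knotoid to chord diagram'' algorithm (recall Fig.~\ref{fig:LCD_CONSTR}), and define $\Phi$ to send $a^w$ to the regular representative of the singular knotoid produced by the surgery algorithm (recall Fig.~\ref{fig:srg}). The essential check is that $\Phi(a^w)$ has a singular loop of winding number exactly $w$: the surgery duplicates the chord into $c_-$ and $c_+$ and reconnects them through one singular crossing and one positive classical crossing, so the resulting loop inherits the homotopy class $a^w$ computed via $\pi_1(D^2 \setminus \{l,s\}) \cong \Z$; dually, $\Psi$ reads off precisely this $w$ by the same fundamental-group computation used in the forward algorithm.

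Finally I would confirm well-definedness at both ends. The output of $\Phi$, after the isotopy simplification indicated in Fig.~\ref{fig:srg}, is descending, has the singularity adjacent to the leg, and has the minimal real crossing number in its rigid vertex isotopy class, so by Definition~\ref{defn:rkd} it is genuinely a regular diagram. The map $\Psi$ is independent of the chosen representative, since, as already remarked, the winding-number algorithm is unchanged under rigid vertex isotopy moves and real crossing switches. Because both $\Phi$ and $\Psi$ agree with the winding-number identification, they are mutually inverse, which establishes the bijection.

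The step I expect to require the most care is confirming that the two algorithms are genuinely inverse at the level of winding numbers, in particular that the surgery in $\Phi$ introduces no spurious wrapping around $l$ or $h$ that would shift the count, and that the collapsing of the two marked points $s$ into one and the abstraction of the loop to a circle labelled $w$ match the $\pi_1$-computation used by $\Psi$.
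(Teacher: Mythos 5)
Your proposal is correct and follows essentially the same route as the paper: the corollary is obtained by combining the Proposition that $A_1^l$ is infinite cyclic (so that $C = a^w \mapsto w$ identifies it with $\Z$) with Theorem~\ref{thrm:class}'s classification of regular diagrams by the winding number of the singular loop, and composing the two identifications. Your additional verification that the two algorithms of Section~\ref{s:lcd} realize this bijection concretely and are mutually inverse is consistent with the paper's remarks (which already note invariance of the chord-diagram construction under singular equivalence and uniqueness of the recovered singular knotoid) and is a welcome, though not strictly necessary, elaboration.
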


\subsection{The integration theorem}

With the proof of Theorem~\ref{thrm:class} and by Corollary~\ref{cor:bij}, a natural question arises.  Namely,  given a {\it weight system} $W$, which is a function $W: A_1^l\rightarrow R$ where $R$ is a commutative ring,  does $W$ determine a type-1 invariant of spherical knotoids? The first, obvious thing to observe is that $W$ must give the value zero to the trivial linear chord diagram, since this diagram corresponds to the unknot with one singular crossing. 

We start exploring the question by the following idea. Given a spherical knotoid diagram $K$ we bring $K$ to a descending form, by appropriate crossing switches and creating at the same time a number of singular knotoid diagrams with one singular crossing. More precisely, we apply the following algorithm.
\begin{itemize}
    \item Start walking from the leg of $K$ toward its head. 

  \item  Let $c_1$ be the first crossing that we traverse from underneath as we arrive at it for the first time. We switch $c_1$ and we obtain a new knotoid diagram $K_1$  and  a singular knotoid diagram $S_1$. 
  
  \item  We repeat the above step for the diagram $K_1$  and its first crossing  $c_2$ that fails $K_1$ to be descending, obtaining a new knotoid diagram $K_2$  and  a singular knotoid diagram $S_2$. 
    
  \item After an appropriate number of repetitions, say $r$, we arrive at a descending diagram $K_r$, which is trivial, and a singular knotoid diagram $S_r$, and so the algorithm terminates.
  
  \item Next, correspond each $S_i$ to the linear chord diagram $L_i$ representing the regular diagram in its singular equivalence class.
  
 \item Then evaluate each $L_i$ according to the weight system $W$. 
 
 \item Finally, define  
 \begin{equation}\label{eqn:weight}
     v_1(K) := \sum_{c\in Cr(K)} \de_c sgn(c)W(K_c)
 \end{equation}
where $$\de_c=  \begin{cases} 0, & \mbox{c is an over crossing the first time} \\1, & \mbox{otherwise} \end{cases}$$ 
$sgn(c)$ is the sign of the crossing $c$ and $W(K_c)$ is the value of $W$ on the linear chord diagram that corresponds to the singular knotoid diagram $K_c$, which is $K$ with $c$ nodified.
\end{itemize}

\begin{thrm}\label{thrm:integr}

Every weight system $W:A^l_1\rightarrow R$ that has zero evaluation on the trivial linear chord diagram, gives rise to a type-1 spherical knotoid invariant, by means of  formula (\ref{eqn:weight}). Equivalently,  the sum (\ref{eqn:weight}) is constant on the isotopy class of any spherical  knotoid diagram.
\end{thrm}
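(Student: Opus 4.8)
The plan is to establish the ``equivalently'' formulation: that the sum in (\ref{eqn:weight}) is unchanged under the three Reidemeister moves of Fig.~\ref{fig:iso} (disc and spherical isotopy being immediate), which suffices since knotoid isotopy is generated by these together with disc isotopy. The backbone of every case is the observation, furnished by Theorem~\ref{thrm:class} and Corollary~\ref{cor:bij}, that $W(K_c)$ depends only on the singular equivalence class of the one-singular diagram $K_c$: its value is read off from the winding number of the singular loop around the leg, and this is a complete singular-equivalence invariant. Consequently I may switch any of the \emph{other} classical crossings of $K_c$, or apply any rigid vertex isotopy move (Fig.~\ref{fig:skmoves}) to $K_c$, without altering $W(K_c)$. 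I also record the hypothesis that $W$ vanishes on the trivial chord diagram.

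First I would dispose of the crossings of $K$ left untouched by a move performed elsewhere in the diagram. If $c$ survives such a move, then nodifying $c$ turns that Reidemeister move into a rigid vertex isotopy move applied to $K_c$ (a classical move among the surviving real crossings, away from the singular vertex), so the singular equivalence class of $K_c$, and hence $W(K_c)$, is preserved; the purely local quantities $\delta_c$ and $\mathrm{sgn}(c)$ are visibly preserved as well. Thus each surviving crossing contributes the same term before and after, and it remains only to account for crossings that are created or destroyed.

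For the first Reidemeister move, the single new crossing $c_0$ is a kink, so nodifying it produces a contractible singular loop, i.e.\ the trivial chord diagram; its term carries the factor $W(K_{c_0})=0$ and contributes nothing. For the second move, two crossings $c_1,c_2$ appear on the same pair of arcs sharing a common over-arc: hence $\delta_{c_1}=\delta_{c_2}$, since whether a crossing is first met as an overcrossing is decided by whether the over-arc is traversed before the under-arc, which is the same for both; meanwhile $\mathrm{sgn}(c_1)=-\mathrm{sgn}(c_2)$. Nodifying $c_1$ or $c_2$ yields singular loops whose symmetric difference lies in the contractible bigon of the move, so they have equal winding number and $W(K_{c_1})=W(K_{c_2})$; the two new terms therefore cancel. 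For the third move no crossing is created or destroyed, and by the previous paragraph each of the three surviving crossings contributes unchanged, using that nodifying any one of them converts the classical move into the singular third move of Fig.~\ref{fig:skmoves}.

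Finally, to see that $v_1$ has type $1$ rather than merely being an invariant, I would compute its Vassiliev-skein extension. On a diagram $S$ with a single singular crossing $c$, resolving via (\ref{eqn:vskein}) gives $v_1(S_+)-v_1(S_-)$; all terms for crossings other than $c$ agree (they differ only by a switch at $c$, which preserves singular equivalence classes), while the $c$-terms combine to $(\delta_c^{+}+\delta_c^{-})\,W(S)=W(S)$, because switching $c$ flips both its sign and its first-over/under status. Resolving a second singular crossing then produces a difference of two equal $W$-values and hence $0$, so $v_1$ is of order $\le 1$. The main obstacle I anticipate is precisely the bookkeeping of the middle two paragraphs: one must check, for each Reidemeister configuration and each choice of orientations, that nodifying a participating crossing genuinely lands inside a single rigid vertex isotopy move and that the created-crossing terms vanish or cancel. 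The safeguard throughout is Theorem~\ref{thrm:class}, which collapses every such verification to a comparison of winding numbers of singular loops.
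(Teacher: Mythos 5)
Your proof is correct and follows the same overall strategy as the paper's: both verify that the sum (\ref{eqn:weight}) is unchanged under each Reidemeister move, with Theorem~\ref{thrm:class} (the winding number of the singular loop as a complete invariant of singular equivalence) doing the real work. Within that shared frame you deviate in two useful ways. First, your Reidemeister~II case is a direct cancellation: the two created crossings have equal $\delta$, equal $W$ (their singular loops differ only inside the bigon of the move, hence have the same winding number), and opposite signs. The paper instead invokes the Vassiliev skein relation twice to reduce to a diagram in which both crossings are first overcrossings and hence invisible to the algorithm; your version is more transparent. Second, you explicitly verify that the resulting invariant has order $\le 1$, by showing that its skein extension equals $W$ on one-singular diagrams and therefore vanishes once a second singularity is resolved; the paper's proof establishes only the isotopy invariance and leaves this step implicit in the ``equivalently'' of the statement. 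One caveat: your claim that nodifying \emph{any} of the three crossings of a Reidemeister~III move converts it into the singular third move of Fig.~\ref{fig:skmoves} is not accurate. Nodifying the crossing between the top and bottom strands produces the ``mixed'' slide, in which the middle strand passes over one edge of the rigid vertex and under the other; this is not one of the rigid vertex isotopy moves. This is exactly why the paper treats only the central crossing by a rigid-vertex move and handles the two side crossings by comparing winding numbers of singular loops. Your own stated safeguard---falling back on Theorem~\ref{thrm:class}, since the local move cannot change the winding number of the singular loop around the leg---repairs this, so the lapse is one of justification rather than of substance.
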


\begin{proof} 
We shall prove that  the sum (\ref{eqn:weight}) remains invariant under the application of Reidemeister moves. 

Indeed, consider first a Reidemeister~I move, creating a crossing, say $c$. Then the diagram  $K_c$ with the crossing nodified corresponds to the trivial linear diagram, on which the weight system is $0$. So Reidemeister~I moves contribute nothing to our sum.

A Reidemeister~III move does not create any crossings and it does not hide any of the crossings involved. Furthermore, in an oriented Reidemeister~III move there are some choices involved, because of the orientations. In any such choice, we will see the contribution of the change to be zero. Consider the case where all three arcs  illustrated in Fig.~\ref{fig:RTHREEB} are oriented from left to right. The central crossing contributes to the sum the same before and after the  move, since the two corresponding singular diagrams are rigid vertex isotopic (recall Fig.~\ref{fig:skmoves}). Let now $A$ be the left crossing and $B$ the right one before the move is performed, and let $A^\prime, B^\prime$  be their corresponding crossings after the  move. See Fig.~\ref{fig:RTHREEB}. $O$ is the initial point, i.e. the first point of our local picture that we pass through, travelling from the leg to the head of the knotoid diagram.

\begin{figure}[H]
    \centering
    \includegraphics[height=3cm]{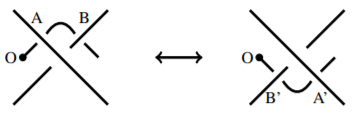}
    \caption{Study of the summation contribution of a Reidemeister~III move}
    \label{fig:RTHREEB}
\end{figure}

Then $K_A$ and $K_{A^\prime}$ correspond to the same regular diagram, since neither the leg nor the head are in the local region of the move, or else we performed a forbidden move, and so the winding number of the singular loop does not change. Hence, $W(K_A) = W(K_{A^\prime})$ and $W(K_B)=W(K_{B^\prime})$. Moreover, $\de_A = \de_{A^\prime},\de_B = \de_{B^\prime}$ since the same arcs are involved and of course $sgn(A) = sgn(A^\prime), sgn(B) = sgn(B^\prime)$. Therefore, again, there is no contribution to our sum. All other cases of Reidemeister~III moves are treated analogously.

Last we check the Reidemeister~II move. The idea is again to perform such a move and show that it contributes nothing to the sum. In Fig.~\ref{fig:RTWOB} we sum over diagrams, for simplicity, meaning the evaluations on these diagrams. In the diagrams in Fig.~\ref{fig:RTWOB}  we choose the orientations from top to bottom and we assume that the understrand is approached first from a local initial point $O$. 

\begin{figure}[H]
    \centering
    \includegraphics[height=8cm]{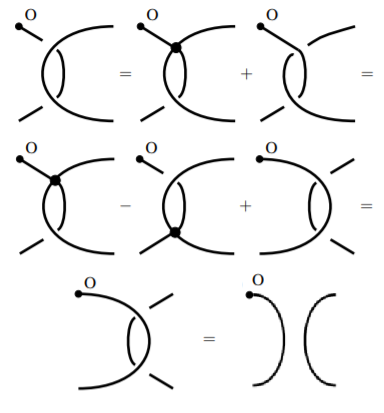}
    \caption{Study of the summation contribution of a Reidemeister~II move}
    \label{fig:RTWOB}
\end{figure}
%$$
%\RTWOR=\RTWOSA+\RTWOB=
%$$
%$$
%\RTWOSA-\RTWOSB+\RTWOA=
%$$
%$$
%\RTWOA
%$$

Then, invoking twice the Vassiliev skein relation, we end up with the diagram where both crossings in the move are first overcrossings. See Fig.~\ref{fig:RTWOB}. But this is invisible by the algorithm and, consequently, by the summation. So, the Reidemister~II move contributes  to the sum 
nothing more than the diagram with two parallel lines in the local region of the move. The proof for all other choices is analogous.
\end{proof}

\begin{rem}\label{rem:v_1_knot_trivial}\rm

Every knot invariant of type-$1$ is trivial. 

\end{rem}
Indeed, let $v$ be an invariant of type one and let $K$ be a singular knot  with exactly one singular crossing  ($v$ vanishes on knots with more singularities). The singular crossing divides the knot into two disjoint closed curves. After appropriate classical crossing switches, using the Vassiliev skein relation,  the two closed curves can turn into simple unknotted closed curves, with no self-crossings or shared crossings between them. Now, these crossing switches cost nothing, because each use of Eq.~\ref{eqn:vskein} will add a singular crossing to the one of the two resulting diagrams and, by definition, $v$ will vanish on this diagram. Hence, we derive the relation: 
$$
v(K := \SC) = v(\finfty)
$$ 
which, again, using Eq.~\ref{eqn:vskein}, is clearly equal to zero, by the first Reidemeister move.

\begin{cor}
Theorems~\ref{thrm:class} and~\ref{thrm:integr} imply that there are non-trivial type-1 invariants for spherical knotoids. Therefore, there are also  non-trivial type-1 invariants  also for planar knotoids since, by  Lemma~\ref{lem:trsf}, the set of spherical knotoids corresponds bijectively to the set of planar knotoids whose head is in the non-bounded component of $\R^2$ minus the knotoid. The precise theory of classifying type-1 invariants  for planar knotoids is beyond the scope of the present paper  and is the subject of future work. 
\end{cor}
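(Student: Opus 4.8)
The plan is to prove the spherical case first, by feeding a concrete non-trivial weight system into the integration theorem (Theorem~\ref{thrm:integr}), and then to transport the resulting invariant to the plane through the bijection of Lemma~\ref{lem:trsf}. The conceptual heart is that, unlike the order-one circular chord diagrams for knots (which the one-term relation forces to die, giving Remark~\ref{rem:v_1_knot_trivial}), the group $A_1^l$ is \emph{non-trivial}: by the preceding subsection it is infinite cyclic, generated by $a$. Hence a weight system $W:A_1^l\to R$ is just a function $\Z\to R$, $a^w\mapsto W(a^w)$, and the trivial linear chord diagram is $1=a^0$.

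For spherical knotoids I would take $R=\Z$ and the simplest non-constant choice vanishing at the trivial diagram, namely $W(a^w):=w$, the winding number itself. Since $W(a^0)=0$, Theorem~\ref{thrm:integr} integrates this $W$ to a type-$\le 1$ spherical knotoid invariant $v_1$ through formula~(\ref{eqn:weight}). It then remains to check that $v_1$ is genuinely of type $1$ and not of type $0$, i.e.\ that it is \emph{non-trivial}; this is exactly where the contrast with Remark~\ref{rem:v_1_knot_trivial} lives.

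For the non-triviality I would argue as follows. A descending diagram contributes nothing to~(\ref{eqn:weight}), since all its $\delta_c=0$, so $v_1$ vanishes on the trivial knotoid (which is descending by Lemma~\ref{lem:desc}). On the other hand, the extension of $v_1$ to a one-node singular knotoid $S$ is $v_1(S_+)-v_1(S_-)$, and for $S$ a nodified regular diagram of winding number $w$ this difference equals $\pm W(a^w)$ by inspection of~(\ref{eqn:weight}): only the resolution in which the nodified crossing is first encountered as an undercrossing contributes, and its linear chord diagram is $a^w$ by Theorem~\ref{thrm:class}. Choosing $w\neq 0$ (for instance $w=1$, the simplest pure knotoid) gives a nonzero value, so $v_1$ separates the two honest resolutions $S_\pm$; hence $v_1$ is non-constant and does not vanish on all one-node knotoids, so it is of type exactly $1$. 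This establishes a non-trivial type-$1$ invariant of spherical knotoids.

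For planar knotoids I would transport $v_1$ along the bijection of Lemma~\ref{lem:trsf}. That bijection $\mathcal{K}(S^2)\cong\{\text{special planar knotoids}\}$ is induced by stereographic projection $\pi:S^2\setminus\{\infty\}\to\R^2$, a homeomorphism carrying knotoid diagrams to knotoid diagrams, classical crossings to classical crossings and nodes to nodes; since the Vassiliev skein relation and the rigid-vertex moves are local and supported away from $\infty$, they are preserved by $\pi$. Thus $v_1^{\R^2}:=v_1\circ\pi^{-1}$ is a well-defined type-$\le 1$ invariant on the special planar knotoids, inheriting non-constancy from $v_1$, and this already witnesses a non-trivial type-$1$ invariant in the plane. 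The main obstacle I expect is precisely this second half: the bijection only reaches the special planar knotoids (head in the unbounded region), so $v_1^{\R^2}$ is a priori defined only on that subclass, and extending it to \emph{all} planar knotoids is genuinely harder — as noted after Lemma~\ref{lem:a*b}, the planar analogue of $A_1^l$ is not cyclic (chords can be ``locked''), so the clean winding-number classification and a full planar integration theory are unavailable. I would therefore frame the planar conclusion as an existence statement witnessed on the special subclass (equivalently, obtained by first compactifying $\R^2$ to $S^2$ and applying $v_1$), and defer the complete classification of planar type-$1$ invariants to future work, exactly as the statement indicates.
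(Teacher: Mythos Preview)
Your argument is correct and is exactly the intended fleshing-out of the corollary, which the paper leaves without a separate proof: pick any weight system on $A_1^l\cong\Z$ vanishing only at $0$ (your $W(a^w)=w$ works, as does the paper's later choice $W(a^w)=t^w-1$), integrate via Theorem~\ref{thrm:integr}, and observe that the extension to a one-node diagram recovers $W$ on its chord diagram, so non-triviality follows from Theorem~\ref{thrm:class}. Your treatment of the planar case---pulling back along Lemma~\ref{lem:trsf} to the special planar knotoids and flagging that a full planar theory is deferred---matches the paper's intent precisely; the only cosmetic point is that your ``$\pm W(a^w)$'' is in fact exactly $+W(a^w)$, since $\delta_{s}^{+}+\delta_{s}^{-}=1$ forces the two resolution contributions at $s$ to add rather than cancel.
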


\section{The singular height} \label{s:sh}

In this section we extend the notion of `height'  (or complexity) of classical knotoids  to singular knotoids, and we introduce the notion of `singular height'for singular knotoids. Both, the height and the singular height are invariants of singular knotoids.  We conclude the section with a conjecture on the singular height of planar singular knotoids with one singularity.

\begin{defn}\rm
The \textit{height}, $h(K)$, of a singular knotoid diagram $K$ is the minimal number of transversal double point intersections of all possible shortcuts connecting the two endpoints of $K$ with the  diagram $K$.  Furthermore, the \textit{height}, $h(k)$,  of a singular knotoid $k$ is defined to be the minimal height over all possible diagrams in the rigid vertex isotopy class of $k$.
\end{defn}

We further define the singular height of a singular knotoid.

\begin{defn} \rm\label{defn:sh}

The \textit{singular height}, $sh(k)$, of a singular knotoid $k$ is the minimal height over all singular knotoid diagrams in the  singular equivalence class of $k$. Obviously $sh(k)\le h(k)$.
\end{defn}

We showed in Theorem \ref{thrm:class} that the singular equivalence class of a spherical singular knotoid with one singularity can be represented by a distinct regular diagram.  We then have:

\begin{lemma}\label{lem:height_regular}
The height of a regular diagram $R_n$ whose singular loop winds $n$ times around the leg equals $n$, that is, $h(R_n)=n$.
\end{lemma}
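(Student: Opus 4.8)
The plan is to establish the two inequalities $h(R_n)\ge n$ and $h(R_n)\le n$ separately. Recall from the proof of Theorem~\ref{thrm:class} the explicit form of $R_n$: it splits into the arc $(l\to s)$, which carries no crossings; the singular loop $(s\to s)$, which is descending and winds $n$ times around the leg $l$; and the arc $(s\to h)$, which runs under the rest of the diagram and meets the singular loop in exactly $n$ points. Since $R_n$ is a special spherical diagram, the head $h$ lies in the region of the point at infinity, so I may work in the annulus $\mathcal{A}=S^2\setminus\{l,h\}$, whose first homology is infinite cyclic.

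For the lower bound, let $\alpha$ be any shortcut joining $l$ to $h$. Since $R_n$ is special, $h$ lies in the region of $\infty$, so I may regard $\alpha$ as an arc issuing from $l$ and escaping to the unbounded region, i.e.\ essentially a ray from $l$ out past $h$. The singular loop $C\subset R_n$ is a closed curve whose winding number around $l$ is, by the very definition of the regular diagram, equal to $n$. The winding number of $C$ about $l$ counts, with sign, the intersections of $C$ with any such ray, so the algebraic intersection number of $C$ with $\alpha$ is $\pm n$. Equivalently, in $\mathcal{A}=S^2\setminus\{l,h\}$ the class of $C$ in $H_1(\mathcal{A})\cong\Z$ is $n$ while $\alpha$ (with its endpoints removed) is a proper arc joining the two ends and is dual to the generator, so $C\cdot\alpha=\pm n$. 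Because the algebraic intersection number bounds the geometric one from below, $\alpha$ meets $C$, and therefore the whole diagram $R_n$, in at least $|n|=n$ points. As $\alpha$ was arbitrary, $h(R_n)\ge n$.

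For the upper bound, I exhibit a shortcut realizing exactly $n$ crossings. Starting at $l$, run the shortcut alongside the crossing-free arc $(l\to s)$ until it reaches a point near $s$, and from there let it run parallel (slightly offset) to the descending arc $(s\to h)$ out to $h$. By construction the first portion contributes no intersections, while the second portion crosses the singular loop exactly as many times as $(s\to h)$ does, namely $n$, and can be routed to avoid $(l\to s)$ and $(s\to h)$ themselves. Thus this shortcut meets $R_n$ in $n$ points, giving $h(R_n)\le n$. Combining the two bounds yields $h(R_n)=n$.

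The crux of the argument is the lower bound, and in particular the reduction to intersection theory on $\mathcal{A}$: one must be sure that the winding number of the singular loop around $l$ is genuinely a homological obstruction that no shortcut can circumvent, \emph{independently} of how the shortcut interacts with the remaining arcs $(l\to s)$ and $(s\to h)$. Once the problem is placed in the annulus $S^2\setminus\{l,h\}$ this is immediate, but it relies on $R_n$ being special, so that $h$ sits in the region of $\infty$; this is guaranteed because $R_n$ arises from the special, descending representative constructed in the proof of Theorem~\ref{thrm:class}.
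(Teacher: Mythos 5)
Your proof is correct and follows essentially the same strategy as the paper's: a lower bound coming from the fact that the winding number of the singular loop around $l$ is a topological obstruction forcing any shortcut to cross the loop at least $n$ times, plus an explicit shortcut realizing exactly $n$ intersections. The only difference is cosmetic --- where the paper invokes homotopy invariance of the winding number and ``$n$ consecutive applications of the Jordan curve theorem,'' you phrase the obstruction as an algebraic intersection number in $H_1(S^2\setminus\{l,h\})\cong\Z$, which is a cleaner formalization of the same idea (and, as you could note, makes the appeal to $R_n$ being special unnecessary for the lower bound).
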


\begin{proof}
Let $R_n$ be the regular knotoid diagram whose singular loop winds $n$ times around the leg of the singular knotoid. We first note that the winding number of the singular loop around the leg is a homotopy invariant in the disc $D^2$ punctured by the point $l$, thus it cannot be reduced. By $n$ consecutive applications of the Jordan curve theorem, one needs to intersect the singular loop at least $n$ times in order to join the leg with the head with a simple arc. On the other hand, there is obviously a shortcut for $R_n$  that creates precisely $n$ intersections. So $h(R_n)=n$.
\end{proof}

\begin{rem}\label{rem:interwind} \rm 
The winding number of the singular loop is equal to the algebraic intersection number of an arc joining the singular point $s$ with the head of the knotoid diagram.
\end{rem}

\begin{prop}
 A regular diagram realizes the singular height of its singular equivalence class.$sh(k_n)=h(R_n)$
\end{prop}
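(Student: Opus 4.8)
The plan is to prove the equality $sh(k_n)=h(R_n)$ by splitting it into the two inequalities $sh(k_n)\le n$ and $sh(k_n)\ge n$, recalling that $h(R_n)=n$ by Lemma~\ref{lem:height_regular}. The upper bound is immediate: by Definition~\ref{defn:sh} the singular height is the minimum of the height over all diagrams in the singular equivalence class of $k_n$, and by Theorem~\ref{thrm:class} the regular diagram $R_n$ belongs to this class, so $sh(k_n)\le h(R_n)=n$.

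The substance lies in the reverse inequality, for which I would show that \emph{every} diagram $K'$ in the singular equivalence class satisfies $h(K')\ge n$, not merely the regular one. (Note that singular equivalence preserves the number of singular crossings, so every such $K'$ has a single singularity and hence a well-defined singular loop.) The first ingredient is that the winding number $n$ of the singular loop around the leg is an invariant of singular equivalence, being fixed by rigid vertex isotopy and by crossing switches as recorded in the remark following the correspondence algorithm; thus the singular loop $\gamma$ of $K'$ represents $n$ in $\pi_1(S^2\setminus\{l,h\})\cong\Z$, and note that $\gamma$ avoids both $l$ and $h$, being the $s\to s$ part of the diagram. The second and decisive ingredient is to read a shortcut topologically: a shortcut $\alpha$ is a simple arc from $l$ to $h$, hence a cross-cut of the annulus $A:=S^2\setminus\{l,h\}$. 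Since cutting $A$ along $\alpha$ yields a disc, the algebraic intersection number of $\gamma$ with $\alpha$ equals $n$ up to sign, so the geometric intersection number satisfies $\#(\gamma\cap\alpha)\ge n$. As $\gamma$ is contained in $K'$, this gives $\#(K'\cap\alpha)\ge n$ for every shortcut $\alpha$, whence $h(K')\ge n$; minimizing over all $K'$ in the class yields $sh(k_n)\ge n$, completing the proof.

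The step I expect to be the main obstacle is making this lower bound uniform over all representatives rather than just the regular one. For $R_n$ one may invoke the Jordan-curve count of Lemma~\ref{lem:height_regular} directly, but for an arbitrary $K'$ the singular loop is immersed and carries self-crossings, and the head need not lie in the unbounded region, so a naive radial count emanating from $l$ does not apply. The annulus cross-cut formulation above is designed precisely to bypass this: it replaces the geometric picture by the homological statement that the intersection of a cross-cut with a class of degree $n$ in $H_1(A)\cong\Z$ is equal to $n$ up to sign, compatibly with Remark~\ref{rem:interwind}, and the geometric count dominates the algebraic one. It then remains only to dispatch the routine compatibility points --- that $\alpha$ may be taken simple and transverse to $K'$ away from the crossings, and that passing to and from the sphere via Lemma~\ref{lem:trsf} leaves the argument intact --- which I would handle briefly.
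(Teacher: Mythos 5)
Your proof is correct and follows the same skeleton as the paper's: the upper bound comes from the regular diagram $R_n$ lying in the singular equivalence class (Theorem~\ref{thrm:class}) and having height $n$ (Lemma~\ref{lem:height_regular}), and the lower bound comes from the invariance of the winding number of the singular loop under singular equivalence. Where you genuinely differ is in how the lower bound is justified. The paper simply asserts that ``applying the idea of the proof of Lemma~\ref{lem:height_regular} on any diagram in the class'' shows every representative has height at least $n$ --- that is, it re-invokes the Jordan-curve counting argument, leaving implicit why that count, which in the lemma exploits the special concentric structure of $R_n$, transfers to an arbitrary representative whose singular loop is immersed with self-crossings and whose head may sit in any region. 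Your cross-cut formulation fills exactly this gap: a shortcut is a cross-cut of the annulus $S^2\setminus\{l,h\}$, the singular loop represents the class $n$ in $H_1$ of that annulus (an invariant of rigid vertex isotopy and crossing switches), and the geometric intersection number of loop and cross-cut is bounded below by the algebraic one, which is $\pm n$. This is the same underlying topological fact the paper appeals to, but stated homologically it applies uniformly to every diagram in the class, so your argument is, if anything, more complete than the paper's; what the paper's version buys in exchange is brevity and the elementary flavour of the Jordan curve theorem.
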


\begin{proof}
By Theorem~\ref{thrm:class} a singular equivalence class is  characterized by  the winding number of the singular loop around the leg, which is a  homotopy invariant in the annulus, thus it cannot be reduced.  Let now $k_n$ denote the singular equivalence class with singular loop with winding number $n$. Applying  the idea of the proof of Lemma~\ref{lem:height_regular} on any diagram in the class $k_n$, means from the above that this diagram cannot have height less than $n$. We further have that the regular diagram $R_n$ belongs to $k_n$ and, by Lemma~\ref{lem:height_regular}, $h(R_n) = n$. Therefore, $sh(k_n)=h(R_n) = n$.
\end{proof}

Thus, it is natural to say that since a regular diagram realizes height and a linear chord diagram corresponds to exactly one regular diagram, chord diagrams realize singular height.

The question here is simple: In planar knotoids can we detect singular height, just by seeing the word corresponding to the chord diagram?

The farther we can reach for the moment is to conjecture the formula that holds in this occasion and search it further in the future.
First of all recall that in planar knotoids $b$ is a free generator as well as $a$. We don't have that $ab=1$ as in Lemma~\ref{lem:a*a} so we should think linear chord diagrams for planar knotoids as words of $F_2[a,b]$. 

So let $c\in F_2[a,b]$, then $c=a^{n_1}b^{m_1}\dots a^{n_k}b^{m_k}, k<\infty,n_i,m_i\in\Z$
\begin{itemize}
    \item $m_i \neq 0 \quad \forall i\le k-1$
    \item $n_i \neq 0 \quad \forall i\ge 2$
    \item if $m_1=0 \Rightarrow m_i=0$ for every $i$ and then $c=a^r$ for some $r\in \Z$
\end{itemize}

We search the first two consecutive exponents which have the same sign.

\begin{enumerate}
    \item If the first such pair is $n_j>0, m_j>0$ for some $j$, then let $d$ be the number of (consecutive) pairs $n_i,m_i$ where $n_i>0, m_i>0$ added by the number of (consecutive) pairs $m_i,n_{i+1}$ where $m_i<0, m_{i+1}<0$
    \item If it is $m_j<0, m_{j+1}<0$ let $d$ be the same as in $(1)$.
    \item If the first such pair is $n_j<0, m_j<0$ for some $j$, then let $d$ be the number of (consecutive) pairs $n_i,m_i$ where $n_i<0, m_i<0$ added by the number of (consecutive) pairs $m_i,n_{i+1}$ where $m_i>0, m_{i+1}>0$
    \item If it is $m_j>0, m_{j+1}>0$ let $d$ be the same as in $(3)$
\end{enumerate}

\begin{conj}
Given a linear chord diagram $D$ with one chord, for planar knotoids, then the singular height of the corresponding singular planar knotoid diagram is given by the formula
$$
\displaystyle{sh(k)=\left[\sum_{i=1}^k\left(|n_i|+|m_i|\right)\right]-2d}
$$
\end{conj}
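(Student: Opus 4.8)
The plan is to identify the singular height with a purely combinatorial quantity attached to the reduced word $c=a^{n_1}b^{m_1}\cdots a^{n_k}b^{m_k}\in F_2[a,b]$, namely the minimal geometric intersection number of the singular loop with a shortcut. First I would observe, exactly as in the spherical arguments behind Lemma~\ref{lem:height_regular} and the proposition following it, that the free homotopy class of the singular loop in the twice-punctured plane $\R^2\setminus\{l,h\}$ is an invariant of the singular equivalence class: real crossing switches leave the underlying immersed curve unchanged, while rigid vertex isotopy and the Reidemeister moves (subject to the forbidden moves, which forbid pushing a strand across $l$ or $h$) are ambient homotopies fixing the two punctures. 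Since $\pi_1(\R^2\setminus\{l,h\})\cong F_2[a,b]$, this class is recorded precisely by $c$, so $sh(k)$ depends only on $c$, and the task reduces to minimizing, over all diagrams in the class and all shortcuts $\alpha$ from $l$ to $h$, the number of transversal intersections of $\alpha$ with the diagram.

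Next I would reduce this to the geometric intersection number $i(c,\alpha)$ of the singular loop with the shortcut. Using Lemma~\ref{lem:s_eq_desc} the diagram may be taken descending, and---adapting the regular-diagram normal form from the proof of Theorem~\ref{thrm:class} to the planar setting through the correspondence of Lemma~\ref{lem:trsf}---the two subarcs $(l\rightarrow s)$ and $(s\rightarrow h)$ can be isotoped close to $l$, $s$ and $h$ so that an optimally chosen shortcut meets only the singular loop. Hence $sh(k)=\min_{\alpha} i(c,\alpha)$, with $\alpha$ ranging over embedded arcs from $l$ to $h$. To evaluate this I would put a complete hyperbolic metric on $\R^2\setminus\{l,h\}$ (a thrice-punctured sphere, the third puncture being $\infty$) and replace both $c$ and $\alpha$ by their geodesic representatives; then $i(c,\alpha)$ is the number of transversal crossings of these geodesics, and no bigons remain.

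The combinatorial heart of the argument is this bigon-free count. A single winding around $l$ (a letter $a^{\pm1}$) or around $h$ (a letter $b^{\pm1}$) forces one crossing with any arc terminating at the corresponding puncture, which yields the baseline term $\sum_{i}(|n_i|+|m_i|)$. The savings come from consecutive windings of the same orientation around the two different punctures: a block such as $a^{+}b^{+}$ can be homotoped to a single loop enclosing both $l$ and $h$, meeting the segment between them zero times, thereby cancelling a pair of crossings; the cross-block patterns like $b^{-}a^{-}$ and their orientation-reversed counterparts behave identically, and this is exactly what the integer $d$ tabulates in its four cases. I would make this precise by showing that each such same-sign adjacency produces one innermost bigon between the geodesic loop and the geodesic arc, that removing it cancels two intersection points, and that after removing all $d$ of them no further bigon survives---so the count $\sum_i(|n_i|+|m_i|)-2d$ is simultaneously achievable (upper bound) and minimal (lower bound).

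The step I expect to be the main obstacle is this last verification: proving that $d$, defined by the delicate sign bookkeeping within and across the blocks $a^{n_i}b^{m_i}$, counts \emph{exactly} the eliminable crossing-pairs, neither more nor fewer. In particular one must check that the minimization over the arc class $\alpha$ (equivalently, the choice of which side of each puncture the shortcut emanates from) allows no cancellations beyond those recorded by $d$, and one must settle whether the cyclic adjacency $(m_k,n_1)$ should contribute---that is, whether the based word or its cyclically reduced conjugacy class governs $i(c,\alpha)$. Resolving these edge cases, most safely through a normal-coordinate (train-track) model on the twice-punctured plane rather than by case-by-case picture chasing, is where the real work lies.
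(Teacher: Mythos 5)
First, a point of order: the paper contains \emph{no proof} of this statement --- it is explicitly stated as a conjecture, with the authors writing that the farthest they can reach is to conjecture the formula and ``search it further in the future.'' So there is no proof of the authors' to compare yours against; your proposal has to stand on its own as a complete argument, and it does not yet do so.

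Your overall strategy --- show that the free homotopy class of the singular loop in $\R^2\setminus\{l,h\}$ is an invariant of singular equivalence, reduce $sh(k)$ to the minimal geometric intersection number of that class with an embedded arc from $l$ to $h$, and then compute this number by a bigon-free geodesic or train-track count --- is sensible, and the invariance step is sound. But there are two genuine gaps. (i) The reduction $sh(k)=\min_\alpha i(c,\alpha)$ requires showing that the subarcs $(l\rightarrow s)$ and $(s\rightarrow h)$ can always be arranged to miss an optimal shortcut, so that only the singular loop is counted. You propose to get this by ``adapting the regular-diagram normal form from the proof of Theorem~\ref{thrm:class} to the planar setting through the correspondence of Lemma~\ref{lem:trsf},'' but that machinery is intrinsically spherical: Lemma~\ref{lem:trsf} covers only planar knotoids whose head lies in the unbounded region, Lemma~\ref{lem:desc} (descending implies trivial) fails in the plane (Fig.~\ref{fig:plan_desc}), and the rail-lifting/unwinding argument requires the head to be accessible from infinity. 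The paper itself warns that these techniques would not work in $\R^2$, where diagrams can be locked; so this step needs a genuinely new argument, not an adaptation, and none is supplied. (ii) The combinatorial heart --- that the bigon-free count equals $\left[\sum_{i=1}^k\left(|n_i|+|m_i|\right)\right]-2d$, i.e.\ that $d$ tabulates \emph{exactly} the eliminable intersection pairs, including the based-versus-cyclically-reduced-word question and the choice of side from which the shortcut leaves each puncture --- is precisely the content of the conjecture, and you explicitly defer it as ``where the real work lies.'' With (i) and (ii) open, neither the upper bound (realizability of the count) nor the lower bound (minimality) is established, so what you have is a plausible plan of attack on an open conjecture, not a proof of it.
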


\section{Examples of type-1 invariants for  knotoids and a universal invariant} \label{s:exs}

Any invariant $I$ of classical (or virtual) knots can be used to define an invariant of knotoids by defining the new invariant of knotoids to be $I$ applied to the classical (resp. virtual)  closure of the knotoid.

\subsection{The affine index polynomial}\label{sub:affine}

The affine index polynomial was defined for virtual knots
and links by L.H. Kauffman \cite{doi:10.1142/S0218216513400075, FolwacznyKauffman}, generalizing and reformulating an invariant of A. Henrich in her PhD thesis (see~\cite{Henrich}), and then for knotoids, virtual or classical, by N. Gügümcü and L.H. Kauffman \cite{GUGUMCU2017186}. It is based on an integer labeling assigned to flat knotoid diagrams (i.e. diagrams with the information `under' or `over' on classical crossings omitted) in the following way. A flat knotoid diagram, classical or virtual, is associated with a graph where the flat classical crossings and the endpoints are regarded as the vertices of the graph. An arc of an oriented flat knotoid diagram is an edge of the graph it represents, that extends from one vertex to the next vertex. 

We start labeling the edges as exemplified in Fig.~\ref{fig:label}. At each flat crossing, the labels of the arcs change by one. If the incoming arc labeled by $a\in\Z$ crosses another arc which goes to the right then the next arc is labeled by $a+1$;  if the incoming arc $b\in\Z$ crosses another arc which goes to the left then it is labeled by $b-1$. There is no change of labels at virtual crossings. Note that it is convenient to label the first arc from the leg to the first crossing by $0$.

\begin{figure}[H]
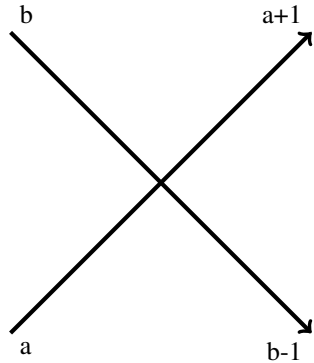

    \centering
     \KP{
     \draw[color=black, ultra thick,->] (-2,-2) -- (2,2);
      \draw[color=black, ultra thick,->] (-2,2) -- (2,-2);

     \filldraw [black] (-2,-2) circle (0.0001pt)  node[anchor=north west] {a};
     \filldraw [black] (2,2) circle (0.00001pt) node[anchor= south east] {a+1};
     \filldraw [black] (-2,2) circle (0.000001pt) node[anchor=south west] {b};   
       \filldraw [black] (2,-2) circle (0.00000001pt) node[anchor=north east] {b-1};  
  }%
    \caption{Integer labeling}
    \label{fig:label}
\end{figure}
Let $c$ be a classical crossing of a knotoid diagram $K$ in a surface $\Sigma$. We denote $w_+(c), \, w_-(c)$ the following integers derived from the labels at the  flat crossing corresponding to $c$:
$$
w_+(c) :=b-(a+1)
$$
$$
w_-(c) :=a-(b-1)
$$
where $a$ and $b$ are the labels for the left and the right incoming arcs,  respectively. The numbers $w_+(c)$ and $w_-(c)$ are called positive and negative weights of $c$, respectively. We also denote $sgn(c)$ the sign of the crossing $c$.
Define, now, the {\it weight} of $c$ to be: 
$$
w_K(c) :=w_{sgn(c)}(c)
$$

\begin{defn} \rm\label{defn:aip}
The {\it affine index polynomial} of knotoid diagram $K$ in a surface $\Sigma$  is defined as
$$
\displaystyle{P_K(t) := \sum_{c\in Cr(K)}sgn(c)(t^{w_K(c)}-1)}
$$
where $Cr(K)$ is the set of all classical crossings of $K$.
\end{defn}

The affine index polynomial is an isotopy invariant and  has some very interesting properties, as shown in \cite{GUGUMCU2017186}, including that its higher degree is smaller than or equal to the height of the knotoid. 
Clearly, for a knot-type knotoid, or equivalently for a knot, the affine index polynomial is trivial.
Furthermore, it is very easy to calculate it by hand, as in the following example which will be very useful in what follows.

\begin{exa} \rm \label{ex:aff}
 We will calculate the affine index polynomial of the knotoid below:

\begin{figure}[H]
    \centering
    \includegraphics[height=7cm]{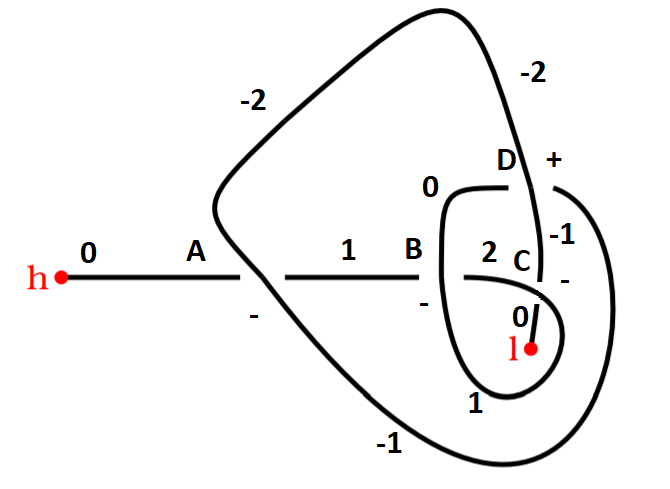}
    \caption{Labeling the knotoid}
    \label{fig:ex_aff}
\end{figure}

\begin{table}[H]
    \centering
    \begin{tabular}{c| c| c|}
         & $w_+=b-(a+1)$  & $w_-=a-(b-1)$\\
         \hline
        A & $2$ & $-2$\\
        \hline
        B & $1$ & $-1$\\
        \hline
        C & $-2$ & $2$\\
        \hline
        D & $-1$ & $1$\\
        \hline
    \end{tabular}
    \caption{Crossing weights for the affine index polynomial}
    \label{tab:weights_affine}
\end{table}
\noindent With the labeling of Fig.~\ref{fig:ex_aff} and of Table~\ref{tab:weights_affine} we have everything we need for calculating the polynomial.  Namely:

$$
\displaystyle{P_K=\sum_{c\in \{A,B,C,D\}}sgn(c)(t^{\omega_k(c)}-1)=-(t^{-2}-1)-(t^{-1}-1)-(t^2-1)+(t^{-1}-1)=}
$$
$$
\displaystyle{=-(t^2+t^{-2}-2).}
$$
Note that, by the discussion before the example, we have a simple proof that $K$, as illustrated in Fig.~\ref{fig:ex_aff}, has height $2$. 
\end{exa}

\begin{prop} 
The affine index polynomial is a type-1 invariant for knotoids in a surface $\Sigma$. 
\end{prop}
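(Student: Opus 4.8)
The plan is to extend $P_K$ to singular knotoids through the Vassiliev skein relation~(\ref{eqn:vskein}) and then verify the two halves of Definition~\ref{defn:fti}: that the extension vanishes on every singular knotoid with at least two singular crossings (type $\le 1$), and that it does not vanish identically on one-singular-crossing diagrams (so it is not of type $0$). The engine of the whole argument is a single observation: the integer labeling of Fig.~\ref{fig:label}, and hence each weight $w_+(c)$ and $w_-(c)$, is read off from the \emph{flat} diagram, which omits all over/under data. A classical crossing switch changes the flat diagram nowhere, so it leaves every label, and therefore every weight $w_\pm(c')$ of every crossing $c'$, completely unchanged; it alters only the sign of the switched crossing and which of its two weights is selected by $w_K(c)=w_{sgn(c)}(c)$.

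First I would record what happens on a one-singular-crossing diagram, both to confirm the invariant is genuinely of type $1$ and to fix the bookkeeping. Resolving the single node $c$ by~(\ref{eqn:vskein}) gives $P(K^s)=P(K^+)-P(K^-)$, where $K^{\pm}$ denote the two classical resolutions. Since switching $c$ fixes all labels, every crossing other than $c$ contributes identically to $P(K^+)$ and $P(K^-)$ and cancels, leaving only the contribution of $c$ itself:
\begin{equation*}
P(K^s)=\bigl(t^{w_+(c)}-1\bigr)-\bigl(-(t^{w_-(c)}-1)\bigr)=t^{w_+(c)}+t^{w_-(c)}-2.
\end{equation*}
Because $w_+(c)=-w_-(c)$ directly from their definitions, this equals $t^{w}+t^{-w}-2$ with $w=w_+(c)$, which is nonzero whenever $w\neq 0$. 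Hence $P$ is not of type $0$.

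The heart of the matter is vanishing on two (or more) singular crossings. The clean way to see this is to note that $P_K$ is \emph{additive} over crossings with switch-invariant coefficients: if $c_1,\dots,c_n$ are the nodes to be resolved, then for each sign vector $\e\in\{\pm 1\}^n$ one has $P(K_\e)=R+\sum_{i=1}^n f_i(\e_i)$, where $R$ is the constant total contribution of the remaining crossings and $f_i(+1)=t^{w_+(c_i)}-1$, $f_i(-1)=-(t^{w_-(c_i)}-1)$ depend only on $\e_i$; this is precisely the switch-invariance of the weights established above. The complete resolution is the alternating sum $\sum_{\e}(-1)^{|\e|}P(K_\e)$, an iterated finite difference of this affine function. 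For $n\ge 2$ the constant $R$ contributes $R\sum_{\e}(-1)^{|\e|}=0$, and each summand $f_i$ contributes a product $\bigl(\sum_{\e_i}(-1)^{[\e_i=-1]}f_i(\e_i)\bigr)\prod_{j\neq i}\bigl(\sum_{\e_j}(-1)^{[\e_j=-1]}\bigr)$ in which at least one factor $\sum_{\e_j}(-1)^{[\e_j=-1]}=1-1=0$ occurs. Thus $P$ vanishes on every singular knotoid with at least two nodes, so $P$ is of type $\le 1$; combined with the previous paragraph this makes it exactly type $1$.

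The step I expect to carry the real weight is the switch-invariance of the labeling and the weights; once that local fact is in place, the finite-type property is purely formal (an iterated finite difference of an affine function vanishes). It is worth remarking that nothing in the argument uses the topology of $\Sigma$, since the labeling and the weights are defined combinatorially and locally at each crossing, so the conclusion holds for knotoids in an arbitrary surface $\Sigma$, as claimed. The only point needing care for full rigor is that the extension of $P$ by~(\ref{eqn:vskein}) is well defined, i.e.\ independent of the order in which the nodes are resolved; this is immediate from $P_K$ being an honest isotopy invariant of knotoids, which guarantees that every fully resolved term, and hence the alternating sum, is unambiguous.
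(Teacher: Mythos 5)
Your proof is correct and follows essentially the same route as the paper's: extend $P$ via the Vassiliev skein relation, observe that the labeling (and hence every weight $w_{\pm}$) is computed from the flat diagram and is therefore unchanged by crossing switches and by resolutions of the nodes, so all non-resolved crossings cancel, giving $t^{w_+(c)}+t^{w_-(c)}-2$ on a single node and zero on two or more. If anything, your write-up is slightly more complete than the paper's: the paper only carries out the cancellation explicitly for two singular crossings (leaving the general $n\ge 2$ case and the ``not of type $0$'' half implicit), whereas your affine-plus-finite-difference formulation disposes of all $n\ge 2$ uniformly and you also record $w_+(c)=-w_-(c)$ and the non-vanishing needed to conclude the invariant is exactly of type $1$.
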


\begin{proof}
We extend the affine index polynomial to singular knotoid diagrams using the Vassiliev skein relation:
$$
P\left(\SC\right) = P\left(\PC\right) - P\left(\NC\right) 
$$
The two diagrams in the right-hand side of the equality differ only in the place of the singular crossing. Applying on each one the formula $ 
\displaystyle{P_K(t)=\sum_{c\in Cr(K)}sgn(c)(t^{\omega_K(c)}-1)}
$  of  Definition~\ref{defn:aip}, we have that in the difference   of the affine index polynomials the contributions of any other crossing cancel each other. 
 So, for a singularity inserted in a knot diagram we have the following contribution to the sum 
$$ 
P\left(\SC\right)  = (t^{\omega_+(c)}-1)-(-(t^{\omega_-(c)}-1))=t^{\omega_+}+t^{\omega_-}-2. 
$$ 
The constant term $-2$ is justified by the change of writhe by $2$ when one changes a positive crossing to a negative.

Take, now, a knotoid diagram with two singularities $c, d$ with corresponding weights $\omega_{\pm}$ and $\omega^{\prime}_{\pm}$. 
From the above we have the following contributions: 

$$
\displaystyle{P\left[ \SC_c, \SC_d\right]}=P_{++}-P_{+-}-P_{-+}+P_{--}=$$
$$
(t^{\omega_+}-1)+ (t^{\omega^{\prime}_+}-1)-(-t^{\omega_-}+1)-(t^{\omega^{\prime}_+}-1)-(t^{\omega_+}-1)-(-t^{\omega^{\prime}_-}+1)+(-t^{\omega_-}+1)+(-t^{\omega^{\prime}_-}+1)=0
$$
So $P_K$ is a Vassiliev invariant of knotoids of type $1$.
\end{proof}

Now recall the  Example~\ref{ex:aff} where we computed the affine index polynomial of the knotoid $K$ of Fig.~\ref{fig:aff_calc_reg}. It was $P_K=-(t^2+t^{-2}-2)$. We also have (omitting the $P_K(t)$) the computation illustrated in Fig.~\ref{fig:aff_calc_reg}.

\begin{figure}[H]
    \centering
    \includegraphics[height=4cm]{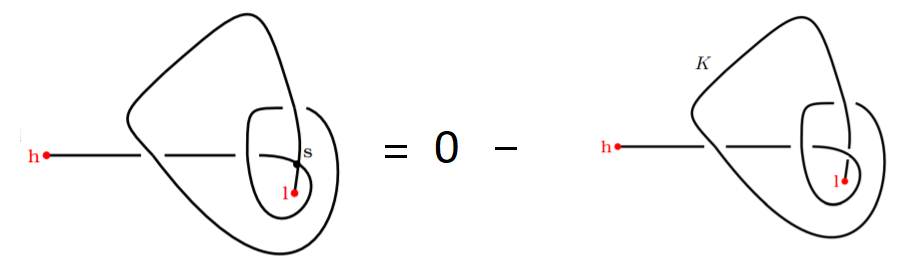}
   \caption{ The affine index polynomial of the knotoid $K$ is  $P_K = -(t^2+t^{-2}-2)$}
    \label{fig:aff_calc_reg}
\end{figure}

So the regular diagram with winding number $2$ has an affine index polynomial $P_K=t^2+t^{-2}-2$, which of course was what we expected, by the calculation of the affine index polynomial on an abstract knotoid diagram with one singularity.

\begin{rem}\label{rem:affwind} \rm

By \cite[proof of Theorem 4.12]{GUGUMCU2017186}, an equivalent way to calculate the weight of a crossing is by taking the sum, say $n$, of the algebraic intersection numbers of the loop $l(c)$ formed by the crossing $c$. Then $n$ is equal to either $w_-(c)$ or $w_+(c)$, depending on the orientation of $l(c)$.
Applying the above for $c$ the crossing replacing the singular crossing $s$ of a spherical knotoid, we have that $l(c)$ corresponds to the singular loop. Then, from Remark~\ref{rem:interwind}, the winding number,  $w$,  of the singular loop is equal to either $w_+(c)$  or $w_-(c)$. 
Therefore, the affine index polynomial is a type-1 invariant for spherical knotoids that detects the winding number of the singular loop around the leg. Yet, it does not detect the direction, as every singular knotoid has its affine index polynomial evaluation equal to the expression $t^w+t^{-w}-2$ which has symmetric exponents. 
   Hence, an analogue of the affine index polynomial might be able to   detect every singular equivalence class. We want to find a special weight assignment which gives rise to a type-1 invariant that does the trick. This is done in the next subsection.
\end{rem}

\subsection{The universal  \texorpdfstring{$ \bar{v} $}{} invariant}

As shown by the integration theorem (Theorem~\ref{thrm:integr}), any weight system $W$ gives rise to a type-1 knotoid invariant iff it respects the one-term relation, via the formula $$v_1(K)=\sum_{c\in Cr(K)} \de_c sgn(c)W(K_c)$$

As pointed by the Remark~\ref{rem:affwind} the $t^w+t^{-w}-2$ of the affine index polynomial was close to have the desired properties.

Let $K_s$ be a regular diagram (with one singular crossing), such that the singular loop winds around the leg $w$ times. Choose:
$$W(K_s) := t^w-1.$$
The trivial linear chord diagram, $t$, does not wind around the leg, so  $w_t =0$ and  $W(K_t) = t^0-1 =0$. Hence this weight system respects the one-term relation.
\begin{defn} \rm 
We define:
$$\bar{v}(K) :=\sum_{c\in Cr(K)} \de_c sgn(c)W(K_s)=\sum_{c\in Cr(K)} \de_c sgn(c)(t^{w_c}-1)$$
As it follows by the integration theorem, $\bar{v}$ is a type-1 knotoid invariant.
\end{defn}

\begin{cor}
Since every knotoid with one singularity has a $\bar{v}$ evaluation equal to $t^w-1$, $\bar{v}$ is a winding number detector.
\end{cor}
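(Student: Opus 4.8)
The plan is to establish the corollary in two stages: first to confirm that the Vassiliev extension of $\bar{v}$ to a singular knotoid carrying a single singularity reproduces the weight $t^w-1$, and then to deduce the detector property from the injectivity of the assignment $w\mapsto t^w-1$. The second stage is essentially formal, so the substance lies in the first.

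For the first stage I would take a singular spherical knotoid diagram $K_s$ with one singular crossing $s$ whose singular loop has winding number $w$, and resolve $s$ by the Vassiliev skein relation, writing $\bar{v}(K_s)=\bar{v}(K_+)-\bar{v}(K_-)$, where $K_+,K_-$ denote the positive and negative resolutions. Applying formula~(\ref{eqn:weight}) to each resolution, I would compare the two sums term by term over their common crossing set, letting $c_0$ be the crossing that replaces $s$. For every crossing $c\neq c_0$ the sign $sgn(c)$ and the indicator $\de_c$ are unchanged (the route from leg to head is identical, and the over/under data away from $c_0$ are untouched), and $W(K_c)$ is unchanged as well, since the winding number of the loop at $c$ is a flat, homotopy-theoretic quantity insensitive to the over/under datum at $c_0$; hence all terms with $c\neq c_0$ cancel in the difference. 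The only surviving contribution comes from $c_0$: nodifying $c_0$ in either resolution returns $K_s$, so $W((K_\pm)_{c_0})=W(K_s)=t^w-1$, while $sgn(c_0)$ flips and exactly one of the two resolutions presents $c_0$ first as an undercrossing, whence $\de_{c_0}^{+}+\de_{c_0}^{-}=1$. Collecting these, $\bar{v}(K_s)=(\de_{c_0}^{+}+\de_{c_0}^{-})(t^w-1)=t^w-1$. As an alternative route I could invoke Theorem~\ref{thrm:class} to replace $K_s$ by its unique regular representative $R_w$, together with the invariance of finite type invariants under crossing switches on top-row diagrams established earlier, and evaluate directly; but the skein computation is the most self-contained.

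For the second stage I would observe that the map $\Z\to\Z[t,t^{-1}]$, $w\mapsto t^w-1$, is injective, since $t^w-1=t^{w'}-1$ forces $t^w=t^{w'}$ and hence $w=w'$. By Theorem~\ref{thrm:class} the singular equivalence classes of one-singularity spherical knotoids are in bijection with $\Z$ via the winding number of the singular loop, so the values $\{t^w-1\}_{w\in\Z}$ taken by $\bar{v}$ separate all of these classes. Thus $\bar{v}$ detects the \emph{signed} winding number, in contrast with the affine index polynomial of Remark~\ref{rem:affwind}, whose symmetric value $t^w+t^{-w}-2$ cannot distinguish $w$ from $-w$; this is precisely the improvement the invariant was designed to achieve.

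I expect the only genuine obstacle to be the bookkeeping in the first stage: verifying carefully that switching the single crossing $c_0$ leaves every other $\de_c$, $sgn(c)$, and loop winding number intact, and that the two indicators $\de_{c_0}^{+}$ and $\de_{c_0}^{-}$ are complementary. Once this cancellation is pinned down, the conclusion follows at once from the injectivity of $w\mapsto t^w-1$ together with the classification in Theorem~\ref{thrm:class}.
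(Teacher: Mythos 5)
Your proof is correct and in substance matches the paper's: the paper states this corollary without any written proof, treating it as immediate from the fact that the invariant produced by Theorem~\ref{thrm:integr} from the weight system $W(K_s)=t^w-1$ has precisely that weight system as its evaluation on one-singularity (top-row) knotoids, combined with the injectivity of $w\mapsto t^w-1$ and the classification of Theorem~\ref{thrm:class}. Your first-stage skein computation (in particular the observations that $(K_\pm)_{c_0}=K_s$, that all terms with $c\neq c_0$ cancel because winding numbers, signs and the indicators $\de_c$ are insensitive to the switch at $c_0$, and that $\de_{c_0}^{+}+\de_{c_0}^{-}=1$) simply supplies, correctly, the verification that the paper leaves implicit.
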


In the same way we can define a new invariant $\zeta$ by choosing $W(K_s) :=t^w+t^{-w}-2$. The proof that $\zeta$ is an invariant under isotopy is identical as the one for the invariant $\bar{v}$.

\begin{prop}
The invariant $\zeta$  defined by the weight system $W(K_s) :=t^w+t^{-w}-2$ is the affine index polynomial.
\end{prop}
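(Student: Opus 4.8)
The plan is to show directly that the defining formula (\ref{eqn:weight}) for $\zeta$, when fed the weight system $W(K_s)=t^w+t^{-w}-2$, reproduces the affine index polynomial $P_K$. The cleanest route is a telescoping argument along the descending algorithm, exploiting that $P$ is itself a type-1 invariant whose associated weight system is precisely $W$.

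First I would record the ingredient already available. By the computation in the proof that the affine index polynomial is type-1, resolving a single singular crossing $c$ via the Vassiliev skein relation (\ref{eqn:vskein}) gives $P(\SC)=(t^{w_+(c)}-1)+(t^{w_-(c)}-1)=t^{w_+(c)}+t^{-w_+(c)}-2$, since $w_+(c)+w_-(c)=0$. By Remark~\ref{rem:affwind} the exponent $w_+(c)$ is, up to sign, the winding number of the singular loop obtained by nodifying $c$, so the value of the singular affine index polynomial on a singular knotoid with one singularity of winding number $w$ equals $t^{w}+t^{-w}-2=W(K_s)$. Thus $P$ and $\zeta$ share the same weight system, and it remains to connect this weight system to the global values of $P$.

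Next I would run the descending algorithm of Theorem~\ref{thrm:integr} on a spherical knotoid diagram $K$: switching its under-first crossings one at a time yields $K=K_0\to K_1\to\cdots\to K_r$, where $K_r$ is descending and hence, by Lemma~\ref{lem:desc}, trivial. Two observations make the bookkeeping work. A crossing switch does not alter the underlying flat diagram, so the traversal order is unchanged; consequently the set of switched crossings is exactly $\{c:\de_c=1\}$, independent of the order, and for every $c'\ne c_i$ the sign and the integer labels (hence $w_K(c')$) coincide in $K_{i-1}$ and $K_i$. Therefore the difference $P(K_{i-1})-P(K_i)$ collapses to the single $c_i$ contribution, which by (\ref{eqn:vskein}) equals $sgn(c_i)\bigl(t^{w_+(c_i)}+t^{-w_+(c_i)}-2\bigr)=sgn(c_i)\,W(K_{c_i})$; here I use that the winding number of the loop at $c_i$ is a flat invariant, so nodifying $c_i$ in $K_{i-1}$ and in the original $K$ produces the same chord diagram.

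Finally I would telescope: $P(K)-P(K_r)=\sum_i\bigl(P(K_{i-1})-P(K_i)\bigr)=\sum_{c:\de_c=1}sgn(c)\,W(K_c)$, and since $P(K_r)=0$ this is exactly $\sum_{c\in Cr(K)}\de_c\,sgn(c)\,W(K_c)=\zeta(K)$, proving $P_K=\zeta(K)$. Equivalently, $P-\zeta$ is a type-1 invariant with vanishing weight system, hence of order zero; any order-zero invariant of spherical knotoids is constant by crossing switches together with Lemma~\ref{lem:desc}, and both $P$ and $\zeta$ vanish on the trivial knotoid, forcing $P=\zeta$. The main obstacle is the sign bookkeeping in the telescoping step --- confirming that switching a single crossing flips exactly its own sign while leaving all other signs and all labels untouched, so that the skein increment carries the factor $sgn(c_i)$ demanded by (\ref{eqn:weight}), and checking that the winding number read off at $c_i$ is insensitive to the crossings already switched.
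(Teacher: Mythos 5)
Your proof is correct, but your primary route is genuinely different from the paper's. The paper's proof is a short, abstract comparison: both $P$ and $\zeta$ are type-1 invariants, their top-row evaluations agree (both equal $t^n+t^{-n}-2$ on the regular diagram $R_n$), hence their difference is a type-0 invariant; a type-0 invariant of spherical knotoids is determined by its value on the trivial knotoid, where both vanish, so $P=\zeta$. You instead unroll this principle into an explicit computation: you run the descending algorithm of Theorem~\ref{thrm:integr}, verify that the set of switched crossings is exactly $\{c:\de_c=1\}$ and that a single switch changes only its own sign and no flat labels, and then telescope $P(K)-P(K_r)=\sum_i\bigl(P(K_{i-1})-P(K_i)\bigr)$, identifying each increment with $sgn(c_i)\,W(K_{c_i})$ via the skein computation $t^{w_+}+t^{w_-}-2$ with $w_++w_-=0$ and Remark~\ref{rem:affwind}, and finishing with $P(K_r)=0$ by Lemma~\ref{lem:desc}. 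Your sign bookkeeping and the flat-invariance of the winding number are handled correctly, so the argument goes through. What your route buys is a self-contained, constructive verification that $P$ literally satisfies formula (\ref{eqn:weight}) with the weight system $W(K_s)=t^w+t^{-w}-2$, without invoking the general ``equal top rows implies equal up to type-0'' principle; what the paper's route buys is brevity and the reusable conceptual statement that a type-1 invariant of spherical knotoids is determined by its weight system together with its value on the trivial knotoid. Note that your closing ``equivalently'' sentence is precisely the paper's proof, so your write-up in fact contains both arguments.
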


\begin{proof}
Both the $\zeta$-invariant and the affine index polynomial are type-1  invariants for knotoids. Moreover, as we already saw, in each regular diagram they take values that depend only on the winding number, namely $\zeta(R_n)=P(R_n)=t^n+t^{-n}-2$, where $R_n$ is the regular diagram with winding number of its singular loop around the leg equal to $n$.

Since their top row evaluations are identical we get that they differ only by type-0 invariants and, so, they differ by their trivial knotoid evaluations, which vanish in both cases.
Hence, the two invariants are identical. 
\end{proof}

\begin{rem}\rm
The affine index polynomial and the $\bar{v}$ invariant are related through the following equation: 
$$P_K(t)=\bar{v}(K)(t)+\bar{v}(K)(t^{-1}).$$
This is a consequence of the conversation above, just by comparing the weight assignments.
\end{rem}

\begin{rem}\rm \label{rem:Henrich}
A. Henrich showed that an analogue of the affine index polynomial is a type-1 invariant for virtual knots. Furthermore, Henrich \cite{Henrich} has a general theory of type-1 invariants for virtual knots and a direct approach for a universal type-1 invariant for virtuals, which is called the `Gluing invariant' and is denoted by $G$. In this context, one can adapt the definition of the Gluing invariant for knotoids by  defining:  $G^{\prime}(k) = G(\bar{k})$, where $\bar{k}$ denotes here the virtual closure of the knotoid $k$. See also related work of Petit~\cite{Petit}. 

As pointed out in Remark~\ref{rem:clos_inv} taking invariants of knotoids via any type of closure loses topological information of the knotoids. If, now, we pull back the Gluing invariant and define it directly on knotoid diagrams, it would be in our notation:  
$$
\displaystyle{G(K) := \sum_{c\in cr(K)}sgn(c)(K_c - K_t)}
$$
where $K_t$ is the equivalence class of the trivial knotoid diagram with one singularity. 
 We note that  $K_t$ is unique  because all spherical knotoid diagrams are equivalent when seen up to homotopy, while on the other hand there are many different homotopy classes of virtual knots.  The Gluing invariant can, thus, also be regarded as a universal  type-1 invariant for knotoids. We can do something more. Our direct approach is a specific weight system driven formulation of the type-1 invariant, which produces our universal $\bar{v}$  invariant above. In fact, our formulation can be generalized to Vassiliev invariants of higher type for knotoids and it relates every invariant to a weight system of chord diagrams.

 \end{rem}

\section{Higher type invariants coming from the Jones polynomial, the Kauffman bracket and the Turaev extended bracket} \label{s:hoti}

\subsection{The Kauffman bracket and the Turaev extended bracket for knotoids}

 In \cite{turaev2012knotoids} several invariants for knotoids have been defined, mainly  for spherical ones. One of them is the  {\it Kauffman bracket}  polynomial. Namely, for $K$ a spherical knotoid or  multi-knotoid diagram we define inductively  $\left\langle K\right\rangle$ to be the element of the ring $\mathbb{Z}[A^{\pm 1}] $   by means of the rules:

\begin{enumerate}
%the first rule
\item
  $\left\langle\KPA\right\rangle = 1 = \left\langle\KPF\right\rangle$ 

%the second rule
\item 
  $\left\langle L \cup \KPA\right\rangle=(-A^{2}-A^{-2})\langle L\rangle$

%the third rule
\item
  $\left\langle\KPB\right\rangle=
  A\left\langle\KPC\right\rangle + A^{-1} \left\langle \KPD \right\rangle$
  
  \end{enumerate}
  
Proceeding now with a closed formula:  If $G$ is the underlying planar graph for $K$, then a state of $G$ is a choice of splitting marker
for every vertex of $U$.  We can choose between the $A$-splitting which is $\KPC$ or the  $B$-splitting which is $\KPD$. We call the underlying planar graph for a diagram $K$ the {\it universe}  for $K$. This terminology distinguishes the underlying planar graph from the link projection and from other graphs that can arise. Thus we speak of the states of a universe.
Then the formula for the bracket is $$\displaystyle{\left\langle K\right\rangle=\sum_{S:state}\left\langle K|S\right\rangle (-A^2-A^{-2})^{|S|-1}=\sum_{S:state} A^{\sigma_s} (-A^2-A^{-2})^{|S|-1}}$$ 
where $\sigma_s\in \mathbb{Z} $ is the sum of the values $\pm1$ of the states over all crossings of $K$.

The bracket polynomial is a Laurent polynomial ( $\left\langle K\right\rangle \in \mathbb{Z}[A^{\pm 1}] $ ), and it is a regular isotopy invariant. 

Under a  Reidemeister~I move the bracket polynomial is multiplied by $-A^{\pm 3}$, so $(-A^3)^{-w(K)} \left\langle K\right\rangle$ is an isotopy invariant, where $w(K)$ is the writhe of $K$, and (for classical knots) it coincides with the {\it Jones polynomial} with an appropriate change of variable.

We shall now recall the definition of the {\it Turaev extended bracket}, which is a two-variable extension of the Kauffman bracket.
\begin{itemize}
    \item Pick a shortcut $\alpha \in S^2$ for a spherical knotoid diagram $K$.
    \item given a state $s$ call the smoothed 1-manifold $K_s$ and its segment component $k_s$.  $k_s$ coincides with $K$ in a small neighborhood of the endpoints of $K$, and  $\partial k_s = \partial \alpha$ consists of the endpoints of $K$.
    \item orient $K$, $k_s$, and $\alpha$ from the leg to the head of $K$.
    \item Set $k_s\cdot \alpha$ be the algebraic number of intersections of $k_s$ with $\alpha$ and $K\cdot \alpha$ be the algebraic number of intersections of $K$ with $\alpha$, with the endpoints not being counted.
    \item Define a Laurent polynomial in variables $A,u$, $T_K(A,u)\in \Z[A^{\pm1},u^{\pm1}]$ as follows
\end{itemize}

$$
\displaystyle{T_K(A,u)=(-A^3)^{-w(K)}u^{-K \cdot \alpha}\sum_{s:state} A^{\sigma_s} u^{k_s \cdot \alpha} (-A^2-A^{-2})^{|s|-1}}
$$

As it turns out, $T_K(A,u)$ is an isotopy invariant for knotoids. 

\begin{rem}\rm
Clearly, the Kauffman bracket polynomial can be extracted from the Turaev extended bracket polynomial by specializing $u=1$, see also \cite{turaev2012knotoids}.
\end{rem}

\subsection{Finite type invariants and the Jones polynomial}

The search for finite type invariants coming out of known polynomial invariants has caught in the past the attention of many mathematicians starting with Birman \& Lin in \cite{Birman1993}. Some interesting calculations that give rise to powerful finite type invariants were given in \cite{HKAUFFMAN1987395,KAUFFMAN2004DIAGR}. 
Recall the definition of the normalized bracket for a knot $K$ to be 
$$ \displaystyle{f_K (A) = (-A^{-3})^{-w(K)} \left\langle K\right\rangle (A)} 
$$

where the writhe $w(K)=n_+ - n_-$ is this sum of crossing sums and $n_\pm$ is the number of positive/negative crossings.

Making the substitution $t^ {1/4} = A^{-1}$ we obtain the classical Jones polynomial.

The skein relation for the Jones polynomial together with an initial condition for the unknot is
$$t^{-1}J(\PC) - tJ(\NC) = (t^{1/2}-t^{-1/2})J(\TU)\ ;\qquad\qquad 
  J(\unkt) = 1\ . 
$$

Making a substitution $t:=e^x$ and
then taking the Taylor expansion into a formal power series in $x$, we can represent the Jones polynomial of a knot $K$ as a power series
$$
J(K)=\sum_{n=0}^\infty j_n(K) x^n\ .
$$
We claim that the coefficient $j_n(K)$ is a Vassiliev invariant of order 
$\le n$. Indeed, substituting $t=e^x$ into the skein relation gives
$$
(1-x+\dots)\cdot J\left(\PC\right) -
  (1+x+\dots)\cdot J\left(\NC\right) =
  (x+\frac{x^2}{4} +\dots)\cdot J\left(\TU\right)
$$
From which we get
$$
J\left(\SC\right) = J\left(\PC\right) - J\left(\NC\right) =
   x\Bigl(\mbox{some mess}\Bigr)
$$

The only summands that are not divisible by $x$ are on the left hand side. 
This means that the value of the Jones polynomial on a knot with a single double point is divisible by $x$. Therefore, the Jones polynomial of a singular knot with $k>n$ double points is divisible by $x^k$, and thus its $n$th coefficient vanishes on such a knot.

The exact same calculations apply in the case that $K$ is a knotoid diagram by taking as initial condition that  $J\left(\unkd\right)=1$. The interesting point here is that the type-1 invariant coming from the Jones polynomial seems to be trivial even for knotoids, since the bracket cannot see the winding around the leg. This question remains still open for us. In Subsection~\ref{sub:Tur}  we generalize this idea.

%We can be more specific here as far as top row evaluations are concerned. Recall the oriented bracket as state expansion for the Jones polynomial with the its basic rules.

%$$
%J\left(\PC\right) = -t^{\frac{1}{2}}J\left(\TU\right) - tJ\left(\SMO\right)
%$$
%$$
%J\left(\NC\right) = -t^{-\frac{1}{2}}J\left(\TU\right) - t^{-1}J\left(\SMO\right)
%$$
%$$
%J\left(\unkt\right) = -t^{\frac{1}{2}} - t^{-\frac{1}{2}}
%$$

%With the substitution $t=e^x$ we get

%$$
%J\left(\PC\right) = -e^{\frac{x}{2}}J\left(\TU\right) - e^xJ\left(\SMO\right)
%$$
%$$
%J\left(\NC\right) = -e^{-\frac{x}{2}}J\left(\TU\right) - e^{-x}J\left(\SMO\right)
%$$
%$$
%J\left(\unkt\right) = -e^{\frac{x}{2}} - e^{-\frac{x}{2}}
%$$

%And thus,

%$$
%J\left(\SC\right)= J\left(\PC\right) - J\left(\NC\right) =+(-e^{\frac{x}{2}}+e^{-\frac{x}{2}}) J\left(\TU\right)+(- e^x+e^{-x})J\left(\SMO\right)
%$$

%What follows from this computation is that top row evaluations correspond to leading terms of the expansion.

%$$
%J\left(\unkt\right) = -e^{\frac{x}{2}} - e^{-\frac{x}{2}} =-2 \dots \mbox{(higher order terms)}
%$$
%$$
%-e^{\frac{x}{2}}+e^{-\frac{x}{2}} =-x + \dots \mbox{(higher order terms)}
%$$
%$$
%-e^{x}+e^{-x} =-2x + \dots \mbox{(higher order terms)}
%$$

%Hence, we can compute the top row for the Jones polynomial using the formulas.
%$$
%j_*\left(\SC\right) = -j_*\left(\TU\right) - 2j_*\left(\SMO\right)
%$$
%$$
%j_*\left(\unkt\right) = -2
%$$

\subsection{Higher type invariants for knotoids coming from the Turaev extended bracket}\label{sub:Tur}

We now proceed with making in the Turaev extended bracket polynomial $T_K(A,u)$ the substitution $A=e^x$.

\begin{prop}
  Substituting  $A=e^x$ in the Turaev extended bracket polynomial $T_K(A,u)$ of a knotoid $K$ yields the   power series expansion: 
$$
\displaystyle{T(K)=\sum_{k=0}^{\infty}\sum_{l=-n_1}^{m_1}t_{k,l}u^lx^k}
$$
Then $\displaystyle{\sum_{l=-n_1}^{m_1}t_{k,l}u^l}$ and $t_{k,l}$ for all $l \in \{-n_1,-n_1+1,\dots,m_1\}$ are type-$k$ invariants of knotoids.
\end{prop}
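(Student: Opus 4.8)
The plan is to mirror the argument given above for the Jones polynomial, but carried out at the level of the state sum so as to keep track of the extra variable $u$. First I would extend $T_K$ to singular knotoids through the Vassiliev skein relation~\eqref{eqn:vskein}, so that on a singular knotoid with $m$ singular crossings the value is the complete-resolution alternating sum $\sum_{\e\in\{\pm1\}^m}(-1)^{|\e|}T(K_\e)$, where $K_\e$ ranges over the $2^m$ diagrams obtained by positive/negative resolution of the $m$ singular crossings. After the substitution $A=e^x$ each $T(K_\e)$ becomes an element of $\Z[u^{\pm1}][[x]]$, and the goal is to prove that this alternating sum lies in $x^m\,\Z[u^{\pm1}][[x]]$. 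Granting this, the coefficient of $x^k$ in $T(K)$ vanishes whenever $K$ has more than $k$ singular crossings; that coefficient is precisely $\sum_l t_{k,l}u^l$, and since distinct powers of $u$ are linearly independent each $t_{k,l}$ vanishes there as well, so both $\sum_l t_{k,l}u^l$ and every $t_{k,l}$ are of type~$k$. They are honest invariants because $T_K$ is an isotopy invariant (Turaev), and the Vassiliev extension of an isotopy invariant is rigid-vertex-isotopy invariant.

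The heart of the proof is the divisibility by $x^m$. I would fix a single shortcut $\alpha$ serving all $2^m$ resolutions simultaneously; this is legitimate because the diagrams $K_\e$ share the same underlying projection and differ only in over/under data at the $m$ resolved crossings. Expanding each $T(K_\e)$ by the state-sum formula and regrouping, I would sum first over the resolutions $\e$ with the \emph{geometric} smoothing choices held fixed. The key observation is that the smoothed $1$-manifold $K_s$, its loop number $|s|$, and the intersection exponents $K\cdot\alpha$ and $k_s\cdot\alpha$ depend only on the geometric smoothing data and not at all on the over/under resolution $\e$, since smoothing erases crossing type. Consequently, for a fixed global smoothing the only $\e$-dependence sits in the factor $(-A^{3})^{-w(K_\e)}A^{\sigma}$, which factorizes over the $m$ resolved crossings. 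The local alternating sum at each such crossing evaluates, after pulling out the $\e$-independent factors, to $A^{2}-A^{-2}$ or $A^{4}-A^{-4}$ according to which of its two smoothings is taken, and under $A=e^x$ each of these is $e^{2x}-e^{-2x}=4x+O(x^3)$ or $e^{4x}-e^{-4x}=8x+O(x^3)$, hence divisible by $x$. A product of $m$ such factors is divisible by $x^m$, and summing over the $u$-weighted geometric smoothings preserves this divisibility, which establishes the claim.

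The main obstacle is exactly the bookkeeping of the variable $u$, which records a global intersection number with the shortcut and therefore does not obviously behave well under a local manipulation. The resolving idea, and the step I would take most care over, is the independence of $k_s\cdot\alpha$, $K\cdot\alpha$ and $|s|$ from the over/under resolution: this is what lets the $u$-weights and the loop factors pull out of the alternating sum as common constants, so that the factorization over the $m$ crossings, and with it the clean power of $x$, survives unchanged from the $u=1$ (Kauffman bracket/Jones) case. A secondary point to verify is that a single shortcut can indeed be chosen for all resolutions, and that the local factors $A^{2}-A^{-2}$ and $A^{4}-A^{-4}$ come out correctly for both signs of the resolved crossing; both are routine once the sign and smoothing-label conventions in the state sum are pinned down.
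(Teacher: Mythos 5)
Your proposal is correct, and it shares the paper's overall skeleton --- substitute $A=e^x$, show that the value on a diagram with $m$ singular crossings is divisible by $x^m$, read off that the coefficient of $x^k$ is $\sum_l t_{k,l}u^l$, and use linear independence of the powers of $u$ to conclude that each $t_{k,l}$ vanishes individually --- but it establishes the crucial divisibility by a genuinely different mechanism. The paper argues top-down from the skein relation $-A^4T(K_+)+A^{-4}T(K_-)=(A^2-A^{-2})T(K_0)$: after the substitution this shows that the value on a diagram with one singular crossing, $T(K_+)-T(K_-)$, is divisible by $x$, and the passage from one factor of $x$ to $n$ factors for $n$ singular crossings is then simply asserted (it is the standard Birman--Lin induction, the ``some mess'' being a power-series combination of values of $T$ on diagrams with one fewer singularity). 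You instead work bottom-up inside the state sum: fixing one shortcut for all $2^m$ resolutions, exchanging the sum over resolutions with the sum over states, noting that $|s|$, $k_s\cdot\alpha$ and $K\cdot\alpha$ are blind to over/under data, and factoring the alternating sum into per-crossing factors $A^2-A^{-2}$ or $A^4-A^{-4}$ --- your computation of these is right, and consistent with the paper's skein coefficients --- each of which is $O(x)$, yielding $x^m$-divisibility in one stroke, with no induction. What your route buys: it is self-contained (the paper's proof silently assumes Turaev's skein relation for $T$ with the $u$-variable included, which itself requires exactly the observation you prove, namely that the $u$-exponents are resolution-independent), and it makes explicit both the multi-crossing step the paper waves at and the $u$-bookkeeping, which is precisely where one might worry. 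What it costs: it is tied to the state-sum model and to pinning down smoothing-label and writhe conventions, whereas the paper's skein argument is three lines and would apply verbatim to any invariant obeying a skein relation of this shape.
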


\begin{proof}
We have the skein relation 
$$
\displaystyle{-A^4T\left(\PC\right)+A^{-4}T\left(\NC\right)=(A^2-A^{-2})T\left(\TU\right)=}
$$
\vspace{10pt}
$$
-\left(1+4x+\frac{16x^2}{2}+\dots\right)T\left(\PC\right)+\left(1-4x+\frac{16x^2}{2}-\dots\right)T\left(\NC\right)=
$$
\vspace{10pt}
$$
=\left((1+2x+\dots)-(1-2x+\dots)\right)T\left(\TU\right)
$$

Hence, by the Vassiliev skein relation
$$
T\left(\SC\right)=T\left(\PC\right)-T\left(\NC\right)=4x(\mbox{some mess}).
$$
So, $T\left(\SC\right)$ is divisible by $x$. Hence, if a knotoid has $n>k$ singularities, then the coefficient of $x^k$ vanishes.

So we have the following results:
\begin{enumerate}
    \item 
    $\displaystyle{\sum_{l=-n_1}^{m_1}t_{k,l}u^l}$ is a type-$k$  invariant.
    \item
    The fact that the coefficient of $x^k$ vanishes, means that for  knotoids with $n>k$ singularities we have:  $\displaystyle{\sum_{l=-n_1}^{m_1}t_{k,l}u^l=0} \quad \forall u$, which implies that $t_{k,l}=0 \quad \forall l \in \{-n_1,-n_1+1,\dots,m_1\}$.
    So, every $t_{k,l}$ is a type-$k$ invariant.    
\end{enumerate}
\end{proof}

\subsection{Some specific computations}

We shall now calculate some specific finite type invariants arising from

$$\displaystyle{f_K(A)=(-A^3)^{-w(K)} \left\langle K\right\rangle}= c_{-k}A^{-k}+\dots c_{m}A^m=\sum_{l=-k}^{m}c_{l}A^l$$
Expanding we obtain:
$$\displaystyle{f_K(e^x)=\sum_{l=-k}^{m}c_{l}e^{lx}=\sum_{l=-k}^{m}c_{l}\sum_{n=0}^{\infty}\frac{(xl)^n}{n!}=\sum_{n=0}^{\infty}\left(\frac{1}{n!}\sum_{l=-k}^{m}c_{l}l^n\right)x^n}$$

So, $$\displaystyle{v_n=\frac{1}{n!}\sum_{l=-k}^{m}c_{l}l^n}$$

Now we can calculate the finite type invariants arising from the bracket for the knotoid diagram of Fig.~\ref{fig:knotoid_calc}.

\begin{figure}[H]
    \centering
    \includegraphics[height=3cm]{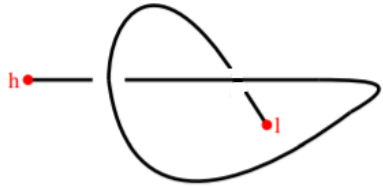}
    \caption{The knotoid diagram $K$}
    \label{fig:knotoid_calc}
\end{figure}

Following the definition of the normalized bracket $f_K$ we get
$$\displaystyle{\left\langle K\right\rangle=A^2(-A^{2}-A^{-2})+1+1+A^{-2}=-A^4+1+A^{-2}}$$

Hence: $$\displaystyle{f_K(A)=(-A^3)^{-w(K)} \left\langle K\right\rangle=A^6(-A^4+1+A^{-2})=-A^{10}+A^6+A^4}$$

From the previous conversation we have: 
$$\displaystyle{f_K(x)=\sum_{n=0}^{\infty}v_nx^n=\sum_{n=0}^{\infty}\frac{-10^n+6^n+4^n}{n!}x^n}$$
$$\displaystyle{v_1(K)=-10^1+6^1+4^1=0}$$
$$\displaystyle{v_2(K)=\frac{-10^2+6^2+4^2}{2!}=-24}$$
$$\displaystyle{v_3(K)=\frac{-10^3+6^3+4^3}{3!}=-120}$$

Now if $K^*$ is the mirror image of $K$ then: $$\displaystyle{f_{K^*}(A)=-A^{-10}+A^{-6}+A^{-4}}$$

and following in the same way

$$\displaystyle{v_1(K^*)=10^1-6^1-4^1=0}$$
$$\displaystyle{v_2(K^*)=\frac{-(-1)^210^{2}+(-1)^26^2+(-1)^24^2}{2!}=-24}$$
$$\displaystyle{v_3(K^*)=\frac{-(-1)^310^3+(-1)^36^3+(-1)^34^3}{3!}=120}$$
Hence we obtained:
\begin{cor}
 $v_3$ is the first finite type invariant arising  from the bracket that distinguishes the knotoid $K$ from its mirror image.
\end{cor}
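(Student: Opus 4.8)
The plan is to deduce the statement from the single parity relation $v_n(K^*) = (-1)^n v_n(K)$, rather than by comparing the two coefficient lists one order at a time. First I would record how the normalized bracket transforms under mirror imaging. Passing from $K$ to its mirror $K^*$ reverses every crossing, which in the Kauffman bracket is realized by the substitution $A \mapsto A^{-1}$, so that $\langle K^* \rangle(A) = \langle K \rangle(A^{-1})$; simultaneously the writhe changes sign, $w(K^*) = -w(K)$. Substituting into $f_K(A) = (-A^3)^{-w(K)}\langle K\rangle(A)$ and simplifying the sign factor, one obtains the clean identity
$$f_{K^*}(A) = f_K(A^{-1}).$$

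Next I would propagate this identity through the power-series expansion. Writing $f_K(A) = \sum_l c_l A^l$, so that $f_{K^*}(A) = \sum_l c_l A^{-l}$, the formula $v_n = \frac{1}{n!}\sum_l c_l l^n$ gives
$$v_n(K^*) = \frac{1}{n!}\sum_l c_l(-l)^n = (-1)^n v_n(K).$$
This relation does all the structural work: every even-order invariant satisfies $v_{2m}(K^*) = v_{2m}(K)$ and so can never separate a knotoid from its mirror, while an odd-order invariant separates them exactly when its value on $K$ is nonzero.

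It then remains only to invoke the computations carried out above. The invariants $v_0$ and $v_2$ are of even order, hence agree on $K$ and $K^*$ automatically; and for the remaining low order we have $v_1(K) = -10 + 6 + 4 = 0$, so that $v_1(K^*) = -v_1(K) = 0$ as well. Thus none of $v_0, v_1, v_2$ distinguishes $K$ from $K^*$. On the other hand $v_3(K) = \frac{-10^3 + 6^3 + 4^3}{6} = -120 \neq 0$, whence $v_3(K^*) = 120 \neq v_3(K)$. Hence $v_3$ is the first finite type invariant arising from the bracket that separates $K$ from its mirror image.

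The step requiring the most care is the first: establishing $f_{K^*}(A) = f_K(A^{-1})$ for knotoids rather than for classical knots. One must verify that the state sum for the bracket behaves under crossing reversal exactly as in the classical setting, and that the writhe-normalization factor transforms compatibly; in particular the apparent discrepancy between $(-A^3)^{w(K)}$ and $(-A^{-3})^{-w(K)}$ must be checked to cancel, which it does since $(-1)^{w(K)} = (-1)^{-w(K)}$. Once this identity is secured, the remainder is the elementary parity argument above, and the conclusion is immediate.
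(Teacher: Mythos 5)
Your proof is correct, and it takes a genuinely different route from the paper's. The paper argues purely by direct computation: it expands $f_K(A)=-A^{10}+A^6+A^4$ and $f_{K^*}(A)=-A^{-10}+A^{-6}+A^{-4}$ separately, evaluates $v_1, v_2, v_3$ on both lists via $v_n=\frac{1}{n!}\sum_l c_l l^n$, and observes that the values agree at orders $1$ and $2$ ($0$ and $-24$) but differ at order $3$ ($-120$ versus $120$). You instead isolate the structural reason behind those numbers: the mirror identity $f_{K^*}(A)=f_K(A^{-1})$ yields the parity relation $v_n(K^*)=(-1)^n v_n(K)$, so even-order bracket invariants can never detect chirality, and an odd-order one does so exactly when it is nonzero on $K$. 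This halves the computation (only $v_1(K)$ and $v_3(K)$ are needed), explains rather than merely verifies the coincidences $v_0(K^*)=v_0(K)$ and $v_2(K^*)=v_2(K)$, and gives a criterion valid for any knotoid, not just this example. What the paper's approach buys in exchange is self-containedness: it never needs to justify that the classical mirror behavior of the bracket and the writhe carries over to (spherical) knotoids. You correctly flag this as the one step needing care, and your resolution is right — the state sum swaps $A$- and $B$-smoothings under crossing reversal while the loop value $-A^2-A^{-2}$ is invariant under $A\mapsto A^{-1}$, the forbidden moves play no role in this argument, and the sign discrepancy in the normalization factor cancels because $(-1)^{w(K)}=(-1)^{-w(K)}$. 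Indeed your identity is confirmed by the paper's own data, since $f_K(A^{-1})=-A^{-10}+A^{-6}+A^{-4}=f_{K^*}(A)$.
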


The same calculations yield that $$\displaystyle{T_K(A,u)=A^4+(A^6-A^{10})u^2}$$
$$\displaystyle{T_K(x,u)=\sum_{n=0}^{\infty}v^{\prime}_nx^n=\sum_{n\in \mathbb{N}, k=0,2}t_{n,k}x^nu^k=\sum_{n=0}^{\infty}\left(\frac{4^n}{n!}+\frac{6^n-10^n}{n!}u^2\right)x^n}$$

Then $$\displaystyle{v^{\prime}_1(K)=4-4u^2}$$
$$\displaystyle{v^{\prime}_2(K)=\left(\frac{4^2}{2!}+\frac{6^2-10^2}{2!}u^2\right)=8-32u^2}$$

But also $t_{n,k}$ are finite type invariants, as we saw, and for example $t_{1,0}(K)=4,\quad t_{1,2}(K)=-4$ and they are non-trivial since the Turaev extended bracket can see the winding around the leg, expressed as intersection number.

\section*{Acknowledgement}

We would like to express our thanks to both Reviewers for their  important comments, which led to the improvement and clarity of the manuscript.

\nocite{*}
\bibliographystyle{IEEEannot}  

\bibliography{template}

\end{document}